\newcommand{\ca}{\mathcal}
\newtheorem{defi}{Definition}[section]
\newtheorem{theo}[defi]{Theorem}
\newtheorem{prop}[defi]{Proposition}
\newtheorem{lem}[defi]{Lemma}
\newtheorem{con}[defi]{Conjecture}
\newtheorem{obs}[defi]{Observation}
\newtheorem{cor}[defi]{Corollary}
\theoremstyle{remark}
\newcommand{\adjtwo}[0]{indirectly adjacent}
\title{Edge-coloring $4$- and $5$-regular projective planar graphs with no Petersen-minor}
	\author[1]{Arnott Kidner\thanks{Funded by the Deutsche Forschungsgemeinschaft (DFG, German Research Foundation) – 546530571}}
    \author[1]{Eckhard Steffen}
    \author[1,2]{Weiqiang Yu}
	\affil[1]{\scriptsize{Department of Mathematics, Paderborn University, Germany}}
    \affil[2]{\scriptsize{Department of Mathematics, Zhejiang Normal University, China}}
    \affil[ ]{\footnotesize{kidner@mail.upb.de, es@upb.de, wyu@mail.upb.de}}
    \date{}
\begin{document}
	
	\maketitle
	
	\begin{abstract}
		An $r$-regular graph is an $r$-graph, if every odd set of vertices is connected to its complement by at least $r$ edges. We prove for $r \in \{4,5\}$, every projective planar $r$-graph with no Petersen-minor is $r$-edge colorable.  
	\end{abstract}
	
	{\bf Keywords:} $r$-graphs, projective planar graphs,
	Petersen-minor, Tutte's 4-flow conjecture

	\section{Introduction} \label{Section: Intro}
	All graphs considered in this paper are finite and may have parallel edges 
	but no loops. A graph is simple if it contains no loops and no parallel edges. The underlying simple graph of a graph $G$ is denoted by $G_s$. 
	
	The vertex set of a graph $G$ is denoted by $V(G)$ and its edge set by $E(G)$. 
	For two disjoint sets $X,Y\subseteq V(G)$ we denote by $E_G(X,Y)$ the set of edges with one end in $X$ and the other end in $Y$. 
	If $X=\{x\}$ and $Y=\{y\}$, then we write $E_G(x,y)$ instead of $E_G(\{x\},\{y\})$.
	If $\vert E_G(x,y) \vert=k$, then we say that $xy$ is a $k${\em -edge} and set
	$\mu(G)=\max\{\vert E_G(u,v) \vert \colon u,v\in V(G)\}$.  
	If $Y= V(G) \setminus X$ we denote $E_G(X,Y)$ by $\partial_G(X)$. In this case, $\partial_G(X)$ is a {\em cut}.    
    Further, if $\vert X \vert$
	is odd, then $\partial_G(X)$ is called an {\em odd cut}.
    A graph is \emph{$r$-regular} if every vertex has degree $r$.
    A \emph{circuit} is a connected 2-regular graph. If $C$ is a circuit with vertices $v_1, \dots, v_n$ and edges $v_1v_2, \dots,v_nv_1$, then we also write $C=v_1\dots v_n$. An $r$-regular graph $G$ is an \emph{$r$-graph}, if $|\partial_G(X)| \geq r$ for every odd cut $\partial_G(X)$. 
	The graph induced by $X$ is denoted by $G[X]$. 
	
	A set of $k$ vertices, whose removal increases the number of components of a graph, is called a {\em $k$-vertex cut}.
	A graph $G$ on at least $k+1$ vertices is called {\em $k$-connected} if $G-X$ is connected for every $X\subset V(G)$ with $|X|\leq k-1$. Moreover, the graph obtained from $G$ by identifying all vertices in $X$ to a single vertex $w_X$ and deleting all resulting loops is denoted by $G/X$. If $G[X]$ is connected, we say $G/X$ is obtained from $G$ by contracting $X$.
    A graph $H$ is a \emph{minor} of the graph $G$ if $H$ can be obtained from a subgraph of $G$ by contracting edges. We also say that $G$ has an $H$-minor. 
    If $H=P$, where $P$ denotes the Petersen graph, then we also say that $G$ has a {\em Petersen-minor} or a $P$-minor for short. 
    
    Let $\Psi(G)$ be an embedding of $G$ in a surface and $f$ be a face with facial walk $W(f)$. The length of 
	$W(f)$ is denoted by $d(f)$ and called the 
	{\em size} of $f$. A face of size $k$ is also called a {\em $k$-face} and 
	a face of size at least $k$ is called a {\em $k^+$-face}.
	If $x$ is a vertex or an edge of the facial walk of $f$, then we also say that $x$ is a vertex or an edge of $f$.
	For further definitions on graphs on surfaces we refer to \cite{Mohar_Thomassen_Book}.
	
	Let $T \subseteq V(G)$ be of even cardinality. 
	A \emph{$T$-join} is a set $M$ of edges such that $T$ is the set of vertices of odd degree in $G[M]$. 	
	A \emph{matching} is a set $M\subseteq E(G)$ such that no two edges of $M$ are adjacent. If every vertex of $G$ is incident with an edge of $M$, then $M$ is a \emph{perfect matching}.
	Let $S$ be a set. An \emph{edge coloring} of $G$ is a mapping $f\colon E(G)\to S$. It is a \emph{$k$-edge coloring} if $|S|=k$, and it is \emph{proper} if $f(e) \not = f(e')$ for any two adjacent edges $e$ and $e'$. The smallest integer $k$ for which $G$ admits a proper $k$-edge coloring is the \emph{edge-chromatic number} of $G$, which is denoted by $\chi'(G)$. If $\chi'(G)$ equals the maximum degree of $G$, then $G$ is \emph{class $1$}; otherwise $G$ is \emph{class $2$}. For a proper edge coloring $c$ and a set of edges $E_G(x,y) = \{e_1, \dots, e_k\}$, we define $c(E_G(x,y)) = \{c(e_1), \dots, c(e_k)\}$.
	
	Tait \cite{Tait_1880} proved that the 4-Color-Theorem is equivalent to the statement that every planar 3-graph is class 1. 
	Tutte's  4-flow conjecture \cite{tutte1966algebraic} states that every 
	bridgeless graph with no Petersen-minor has a nowhere-zero 4-flow. For cubic graphs this conjecture is equivalent to the statement that every $3$-graph with no Petersen-minor is class $1$. 
	Seymour stated the following conjecture for planar $r$-graphs.

	\begin{con}[\cite{seymour1979unsolved}] \label{Conj: Seymour exact}
		For $r \geq 1$, every planar $r$-graph is class $1$. 
	\end{con}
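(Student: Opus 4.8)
The plan is to prove the statement by induction on $r$, peeling off one perfect matching at a time so as to reduce a planar $r$-graph to a planar $(r-1)$-graph. Observe first that an $r$-graph necessarily has an even number of vertices: if $|V(G)|$ were odd, then $X=V(G)$ would be an odd cut with $\partial_G(X)=\emptyset$, contradicting $|\partial_G(X)|\ge r$. This disposes of the degenerate low cases. For $r=1$ the graph is a single perfect matching; for $r=2$ every component is an \emph{even} circuit, since an odd circuit $C$ would give the odd cut $\partial_G(V(C))=\emptyset$, and a disjoint union of even circuits is $2$-edge-colorable; and $r=3$ is exactly Tait's reformulation of the Four-Color-Theorem cited above. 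So I would assume $r\ge 4$ and, as induction hypothesis, that every planar $(r-1)$-graph is class $1$.

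For the inductive step I would seek a perfect matching $M$ of $G$ such that $G-M$ is a planar $(r-1)$-graph; colouring $G-M$ with $r-1$ colours by induction and giving $M$ the last colour then produces a proper $r$-edge-colouring. As $G-M$ is automatically $(r-1)$-regular and planar, only the odd-cut condition must be secured. The relevant parity bookkeeping is clean: for any odd cut one has $|\partial_G(X)|=r|X|-2|E(G[X])|\equiv r\pmod 2$ together with $|\partial_G(X)|\ge r$, so the admissible sizes are $r,r+2,r+4,\dots$, and any perfect matching meets an odd cut in an odd number of edges. Writing $s=|\partial_G(X)|$ and $k=|M\cap\partial_G(X)|$, the requirement $|\partial_{G-M}(X)|\ge r-1$ reads $k\le s-r+1$; for a tight cut ($s=r$) this forces $k=1$, and in general it forbids $M$ from crossing any odd cut too often.

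The heart of the matter, and the step I expect to be the main obstacle, is to produce such a matching using planarity. That some such hypothesis is unavoidable is shown by the Petersen graph, a $3$-graph in which every perfect matching leaves two $5$-circuits, so the odd cut around a $5$-circuit loses all five of its edges and no reducing matching exists; this is precisely why planarity cannot be dropped. Because $G$ is an $r$-graph, Edmonds' perfect-matching-polytope theorem gives $\tfrac{1}{r}\mathbf 1\in\mathrm{conv}\{\chi^{N}\colon N\text{ a perfect matching}\}$, a \emph{fractional} $r$-edge-colouring. Pairing this with the indicator of a tight cut shows that every matching appearing in the convex combination meets each tight cut in exactly one edge, so tight cuts pose no problem; the difficulty is confined to cuts of size $r+2,r+4,\dots$, where the averaged value $1+\tfrac{s-r}{r}$ still leaves room for an individual matching to cross too often. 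It remains to round the fractional colouring to a single integral matching satisfying $k\le s-r+1$ for all cuts at once. Here I would exploit that in a planar embedding the tight and near-tight odd cuts are highly structured — their boundaries are closed curves, forming an essentially cross-free family — and invoke the Four-Color-Theorem in its nowhere-zero-$4$-flow form on the planar dual to certify the required integral matching. Making this rounding precise, and controlling the interaction between cuts of size $r$ and of size $r+2$, is where the genuine work lies; I would expect the even-$r$ and odd-$r$ cases to separate, with even $r$ admitting a more direct treatment via even $2$-factorizations.
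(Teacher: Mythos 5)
The statement you are trying to prove is Conjecture~\ref{Conj: Seymour exact}, which the paper states as an \emph{open conjecture} of Seymour and does not prove; it records only that the cases $r\le 8$ are known, each established in a separate and substantial paper. So there is no ``paper's own proof'' to compare against, and your proposal must be judged as a standalone attempt. As such it has a genuine, fatal gap, and you have located it yourself: everything up to and including the reduction ``find a perfect matching $M$ with $|M\cap\partial_G(X)|\le|\partial_G(X)|-r+1$ for every odd cut, so that $G-M$ is a planar $(r-1)$-graph'' is correct but routine (the parity computations, the evenness of $|V(G)|$, the base cases $r\le 3$, and the fact that Edmonds' theorem forces every matching in the support of the fractional decomposition to meet every \emph{tight} cut exactly once are all standard). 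The entire content of the conjecture is the existence of that one perfect matching, and your proposal replaces it with the sentence ``invoke the Four-Color-Theorem in its nowhere-zero-$4$-flow form on the planar dual to certify the required integral matching.'' No such invocation is known to work; the dual $4$-flow statement is equivalent to the $r=3$ case and gives no control over how a perfect matching of an $r$-graph, $r\ge 4$, meets odd cuts of size $r+2, r+4,\dots$. The claim that near-tight odd cuts form ``an essentially cross-free family'' is also unsubstantiated: uncrossing works for tight cuts (this is the classical tight-cut decomposition), but the family of all odd cuts of size at most $2r-2$, which are exactly the ones that can be violated, need not be laminar, and controlling them is where Guenin ($r=4,5$), Dvo\v{r}\'ak--Kawarabayashi--Kr\'al' ($r=6$) and Chudnovsky--Edwards--Seymour ($r=7,8$) each had to develop genuinely new machinery.

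Two smaller points. First, your remark that the Petersen graph shows ``planarity cannot be dropped'' is correct in spirit but the cut you describe is not odd ($|V(C_5)|=5$ is odd, and $\partial(V(C_5))$ has five edges in $P$, not zero; the real obstruction is that the five edges of that cut all lie in the complementary $5$-circuit's matching structure, i.e.\ $P$ is not $3$-edge-colorable for a different reason). Second, the suggestion that even $r$ could be handled ``more directly via even $2$-factorizations'' faces the same obstruction one level up: you would need a $2$-factor with all components even whose removal leaves an $(r-2)$-graph, and certifying the odd-cut condition for the remainder is again the whole problem. In short, your outline reproduces the standard reduction framework used in the literature, but the step you defer is not a technicality to be ``made precise''--- it is the theorem, it is open for $r\ge 9$, and for $4\le r\le 8$ it is the subject of the papers cited in the text rather than a consequence of the Four-Color-Theorem.
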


	The following conjecture, which naturally generalizes the edge coloring version of Tutte's $4$-flow conjecture for cubic graphs to $r$-graphs, is attributed to Seymour in \cite{guenin2003packing}.
	
	\begin{con} \label{Conj: Petersen minor}
		Let $G$ be an $r$-graph. If $G$ has no Petersen-minor, then $G$ is class 1. 
	\end{con}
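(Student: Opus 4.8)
The plan is to induct on $r$, at each step removing a single perfect matching and appealing to the cubic case as the base.

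For the base, a $2$-graph is a disjoint union of even circuits and hence is trivially $2$-edge-colorable, and the case $r \le 1$ is immediate; so the statement holds for $r \le 2$. The case $r = 3$ is precisely the edge-coloring formulation of Tutte's $4$-flow conjecture for cubic graphs noted above: every bridgeless cubic graph --- equivalently, every $3$-graph --- with no Petersen-minor has a nowhere-zero $4$-flow and is therefore $3$-edge-colorable. I would take this (the theorem of Robertson, Seymour, and Thomas) as given and use it to anchor the induction.

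The inductive step rests on the following reduction. Suppose $G$ is an $r$-graph with $r \ge 4$ and no Petersen-minor, and suppose we can produce a perfect matching $M$ of $G$ such that $G - M$ is again an $(r-1)$-graph. Then the induction closes at once: $G - M$ is $(r-1)$-regular, inherits the absence of a Petersen-minor because the class of Petersen-minor-free graphs is closed under taking subgraphs, and so by the inductive hypothesis is $(r-1)$-edge-colorable; assigning a fresh $r$-th color to the edges of $M$ extends this coloring to a proper $r$-edge-coloring of $G$, witnessing that $G$ is class $1$. Thus everything reduces to producing the matching $M$.

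To see what $M$ must satisfy, fix an odd set $X$. Regularity of $G - M$ is automatic since $M$ is perfect, so only the odd-cut condition is at issue. Because $M$ is perfect and $|X|$ is odd, an odd number of edges of $M$ lie in $\partial_G(X)$, whence $|M \cap \partial_G(X)| \ge 1$ and
\[ |\partial_{G-M}(X)| = |\partial_G(X)| - |M \cap \partial_G(X)|. \]
If $|\partial_G(X)| \ge r+1$ this is at least $r-1$ no matter how $M$ is chosen, so the only binding constraints come from the \emph{tight} odd cuts, those with $|\partial_G(X)| = r$, where we must have $|M \cap \partial_G(X)| = 1$. Hence the reduction lemma amounts to the statement that a Petersen-minor-free $r$-graph has a perfect matching meeting every tight odd cut in exactly one edge. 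I would attack this through the tight-cut decomposition of $G$ into bricks and braces (Edmonds--Lov\'asz--Pulleyblank), choosing the matching so that it crosses each tight cut singly, and it is here that the hypothesis must be used: without it the conclusion can fail, as the Petersen graph $P$ itself shows --- each of its perfect matchings leaves a $2$-factor consisting of two odd $5$-circuits, so $P$ admits no perfect matching whose removal is a $2$-graph.

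The existence of this peeling matching in the Petersen-minor-free setting is the crux, and I expect it to be the main --- indeed in full generality decisive --- obstacle: it is essentially as hard as the conjecture itself (a class-$1$ $r$-graph always peels, by deleting one color class, so peeling is a necessary consequence of the conclusion, and the content is to establish it a priori). A complete argument along these lines would most plausibly route through the structure theorem for graphs with no Petersen-minor, decomposing $G$ into almost-embeddable pieces of bounded genus, with bounded numbers of vortices and apex vertices, glued along small clique-sums, and then constructing the peeling matching piecewise while controlling the tight cuts that straddle the decomposition. The apex vertices and vortices, where surface-embedding and discharging arguments lose their grip, are where I expect the genuine difficulty to concentrate; this is precisely why the unrestricted conjecture remains open and why one is led, as in the present work, to first settle fixed low-genus cases such as the projective-planar one.
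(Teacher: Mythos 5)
You are attempting to prove Conjecture~\ref{Conj: Petersen minor}, which the paper does not prove and which remains open in general: the paper establishes only the projective-planar case with $r \le 5$ (Theorem~\ref{Thm: main}), by a minimal-counterexample, mate, and discharging argument in the projective plane, not by anything resembling your inductive peeling scheme. So the only question is whether your argument stands on its own, and it does not. Your own text concedes the decisive point: the existence of a ``peeling'' perfect matching $M$ such that $G-M$ is again an $(r-1)$-graph is left entirely unproven, and, as you observe, producing it is essentially equivalent to the conclusion itself (any class-1 $r$-graph peels by deleting a color class). Reducing an open conjecture to an equally open lemma, with only a gesture toward brick/brace decompositions and the Petersen-minor-free structure theorem, is not a proof; the unproven lemma is the entire content of the conjecture.

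There is also a concrete error inside the reduction. You claim that if $\vert\partial_G(X)\vert \ge r+1$ then $\vert\partial_{G-M}(X)\vert \ge r-1$ ``no matter how $M$ is chosen,'' so that only the tight cuts constrain $M$. This is false: parity forces $\vert M \cap \partial_G(X)\vert$ to be odd, but gives no upper bound on it. For instance, with $r=4$, an odd cut of size $6$ that $M$ meets in $5$ edges leaves a cut of size $1 < r-1 = 3$ in $G-M$. The correct requirement is $\vert M \cap \partial_G(X)\vert \le \vert\partial_G(X)\vert - r + 1$ for \emph{every} odd cut, so the peeling lemma you need is strictly stronger than the one you state, and the ``only tight cuts are binding'' simplification on which your brick/brace plan rests is unjustified. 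A smaller caveat: even your base case $r=3$ is not elementary; it is the cubic case of Tutte's $4$-flow conjecture, whose proof (Robertson--Seymour--Thomas and successors) is itself a major body of work, and the paper relies only on its projective-planar instance (Theorem~\ref{Thm: Mohar_etal projective plane}).
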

	
	Conjecture~\ref{Conj: Seymour exact} is proved for $r \leq 8$ in a series of papers \cite{7chudnovsky2015edge, 8chudnovsky2015edge, dvovrak2016packing, guenin2003packing, NaserasrYu-2023}. 
	Since planar graphs have no Petersen-minor Conjecture~\ref{Conj: Petersen minor} is true for planar $r$-graphs if $r \leq 8$. 

Recently, Inoue, Kawarabayashi, Miyashita, Mohar, and Sonobe \cite{inoue2024t_cubic_projective_planar} proved far reaching extensions of the 4-Color-Theorem to projective planar graphs.
	A {\em snark} is a cubic graph of class 2 with girth at least $5$ and cyclic connectivity at least 4. 

\begin{theo}[\cite{inoue2024t_cubic_projective_planar}] \label{Thm: Mohar_etal projective plane}
		The only snark embeddable in the projective plane is the Petersen graph. 
	\end{theo}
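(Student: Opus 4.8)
The plan is to argue by contradiction. Let $G$ be a snark embeddable in the projective plane with $G\neq P$ and $|V(G)|$ minimum among all such graphs. Since a planar snark would, via Tait's theorem, violate the $4$-Color-Theorem, $G$ is nonplanar; as every embedding of a connected nonplanar graph in the projective plane is $2$-cell (otherwise some face is a M\"obius band, which would give a planar embedding), I would fix such a $2$-cell embedding. Throughout I would exploit the classical equivalence for cubic bridgeless graphs that $G$ is class $1$ if and only if $G$ has a nowhere-zero $4$-flow, equivalently $E(G)$ is covered by two even subgraphs; being a snark, $G$ is class $2$, so no such flow exists, and the aim is to show this forces $G=P$.

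First I would extract a sharp lower bound on $|V(G)|$ from Euler's formula. Writing $V,E,F$ for the numbers of vertices, edges and faces, the projective plane gives $V-E+F=1$; cubicity gives $E=\tfrac32 V$, hence $F=1+\tfrac{V}{2}$. Girth at least $5$ means every facial walk has length at least $5$, so $2E=\sum_f d(f)\ge 5F$, i.e. $F\le \tfrac{3V}{5}$. Combining,
\[
1+\frac{V}{2}\le \frac{3V}{5}\quad\Longrightarrow\quad V\ge 10,
\]
with equality only when every face is a $5$-face. The equality case $V=10$, $F=6$ is an all-pentagon cubic embedding of the projective plane, and I would invoke the uniqueness of this embedding (the hemi-dodecahedron) to conclude its $1$-skeleton is exactly $P$. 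Hence any counterexample $G\neq P$ satisfies $|V(G)|>10$ and, by the same count, carries a face of size at least $6$.

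For the reducibility step I would set up discharging adapted to Euler characteristic $1$: assign each face $f$ the charge $d(f)-6$ and each vertex the charge $2\deg(v)-6=0$, so the total charge is $-6(V-E+F)=-6$ rather than the sphere's $-12$. Using girth at least $5$ and cyclic connectivity at least $4$ to forbid short cycles and small cyclic cuts, I would aim for a finite unavoidable list of local configurations (clusters of small faces) and prove each reducible: rerouting or contracting it yields a strictly smaller projective planar snark, contradicting minimality, unless the reduction reproduces $P$. A conceptually cleaner alternative for this step is to pass to the orientable double cover: the embedding lifts to a cubic \emph{planar} graph $\widetilde G$ with a free involution $\tau$, and the $4$-Color-Theorem makes $\widetilde G$ class $1$; the task then becomes to select a nowhere-zero $4$-flow of $\widetilde G$ invariant, up to a permutation of colors, under $\tau$, so that it descends to $G$.

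The main obstacle is exactly this descent/reducibility step; the Euler computation is routine and merely isolates the extremal case. Showing that every larger projective planar snark is $3$-edge-colorable requires either a complete, verifiably unavoidable set of reducible configurations---the genuinely technical heart, mirroring $4$-Color-Theorem-style discharging but with the shifted charge budget and the extra non-contractible cycle of the projective plane to control---or a careful analysis of the obstruction in $H^1$ of the projective plane to equivariantly descending a $4$-flow from the planar double cover. I expect the Petersen graph to appear precisely as the unique case where this obstruction is nonzero, which is what singles it out as the only exception.
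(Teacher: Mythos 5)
Your proposal has a genuine and decisive gap: the entire content of the theorem is concentrated in the step you explicitly leave open. The Euler-formula computation correctly shows $|V(G)|\geq 10$ with equality forcing an all-pentagon embedding (the hemi-dodecahedron, whose $1$-skeleton is indeed the Petersen graph), but this only identifies the extremal case; it says nothing about why no snark on more than $10$ vertices embeds in the projective plane. For that you offer two strategies --- a discharging argument with an unavoidable set of reducible configurations, or an equivariant descent of a $4$-flow from the planar double cover --- and you carry out neither. Both are known to be genuinely hard: the discharging route requires producing and verifying a complete unavoidable/reducible list (and your reducibility step, ``rerouting or contracting yields a strictly smaller projective planar snark,'' is not justified --- contractions can destroy girth or cyclic connectivity, so the minimal counterexample machinery does not apply off the shelf), while the double-cover route runs into exactly the $\mathbb{Z}_2$-equivariance obstruction you name, and resolving when that obstruction vanishes is not easier than the original problem. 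Writing ``I expect the Petersen graph to appear precisely as the unique case where this obstruction is nonzero'' is a restatement of the theorem, not an argument for it.

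For context, the paper does not prove this statement either: it is imported wholesale from Inoue, Kawarabayashi, Miyashita, Mohar, and Sonobe, where it is the main result of a substantial piece of work (indeed they prove the stronger classification of Theorem~\ref{Thm: Mohar_etal P-like}). So there is no in-paper proof to compare against, but your sketch should not be mistaken for one: what you have established is only the lower bound $|V(G)|\geq 10$ and the identification of the equality case, which is the routine part you yourself flag as such.
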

    
    Let $G, H$ be two disjoint $r$-graphs.
	For $v \in V(G), u \in V(H)$, an $r${\em -sum} 
	of $G$ and $H$ is an $r$-graph that is obtained from $G-v$ and $H-u$
	by adding $r$ edges joining vertices of degree smaller than $r$ of $G-v$ with those of $H-u$. A $3$-graph is $P$\emph{-like} if it can be obtained from the Petersen graph by repeatedly building 3-sums with planar 3-graphs. 
    The following theorem then gives a complete classification of all projective planar 3-graphs which are class 2. 

\begin{theo}[\cite{inoue2024t_cubic_projective_planar}] \label{Thm: Mohar_etal P-like} A projective planar 3-graph is class 2 if and only if it is $P$-like. 
\end{theo}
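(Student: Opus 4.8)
The plan is to prove the two implications separately, with the main engine being the interaction between proper $3$-edge-colorings and $3$-edge-cuts in cubic graphs. The key technical fact I would isolate first is a colouring lemma for $3$-sums: if $G=G_1\oplus_3 G_2$ is a $3$-sum of two cubic graphs along the $3$-edge-cut created by the sum, then $G$ is class $1$ if and only if both $G_1$ and $G_2$ are class $1$. To prove it I would identify a proper $3$-edge-colouring with a nowhere-zero $\numberset{Z}_2\times\numberset{Z}_2$-flow, using that the three colours meeting at a vertex are exactly the three nonzero elements of the Klein four-group and hence sum to $0$. Flow conservation then forces the colours on any edge-cut to sum to $0$; applied to the $3$-edge-cut $\{e_1,e_2,e_3\}$ of the sum this gives $c(e_1)+c(e_2)+c(e_3)=0$ with all three nonzero, so the three colours are pairwise distinct. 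Consequently a colouring of $G$ restricts to proper colourings of $G_1$ and $G_2$ (contract the opposite side to the summing vertex), and conversely any proper colourings of $G_1$ and $G_2$ can be matched across the cut after permuting colours on one side.

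For the direction ``$P$-like $\Rightarrow$ class $2$'' I would induct on the number of $3$-sums in the construction. The base case is the Petersen graph $P$, which is class $2$. For the inductive step write $G=G'\oplus_3 H$ with $G'$ obtained from $P$ by fewer $3$-sums (hence class $2$ by induction) and $H$ a planar $3$-graph; by the Four-Color-Theorem (equivalently Conjecture~\ref{Conj: Seymour exact} for $r=3$) the planar factor $H$ is class $1$. The colouring lemma then shows $G$ cannot be class $1$, since otherwise both factors, in particular $G'$, would be class $1$. Hence $G$ is class $2$.

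The hard direction is ``class $2$ $\Rightarrow$ $P$-like'', which I would prove by induction on $|V(G)|$ for projective planar $3$-graphs $G$. After routine reductions I may assume $G$ is simple and $3$-connected: parallel edges and $2$-edge-cuts are removed by standard series/parallel reductions that strictly shrink $G$ while preserving class $2$ and, via the embedding, split off a planar part, so that the inductive hypothesis applies. If $G$ is cyclically $4$-edge-connected and has girth at least $5$, then $G$ is a cubic class-$2$ graph with girth $\ge 5$ and cyclic connectivity $\ge 4$, i.e. a snark; since $G$ is projective planar, Theorem~\ref{Thm: Mohar_etal projective plane} forces $G=P$, and $P$ is $P$-like by definition. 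Otherwise $G$ has girth $\le 4$ or a cyclic edge-cut of size $\le 3$, and in each such case I would exhibit a nontrivial edge-cut of size at most $3$ that realizes $G$ as a $3$-sum $G=G_1\oplus_3 G_2$. For instance a triangle $T$ yields the $3$-edge-cut $\partial(V(T))$ and exhibits $G$ as a $3$-sum of $G/V(T)$ with the planar $3$-graph $K_4$; a generic nontrivial $3$-edge-cut gives $G=(G/A)\oplus_3(G/B)$ directly. Choosing the cut so that one factor, say $G_2$, is planar, the colouring lemma yields that $G_1$ is class $2$; as $G_1$ is a strictly smaller projective planar $3$-graph, induction makes it $P$-like, whence $G=G_1\oplus_3 G_2$ is $P$-like.

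The crux, and the step I expect to be the main obstacle, is guaranteeing that the small separating cut can always be chosen with one side planar, and that the low-girth cases (notably girth exactly $4$ in an otherwise cyclically $4$-edge-connected graph) really do reduce. Here projective planarity is essential: a separating simple closed curve in the projective plane must be two-sided and contractible, so it bounds a disk on one side and a M\"obius band on the other; realizing a size-$\le 3$ edge-cut by such a curve makes the disk side planar and keeps the other side projective planar. Turning an abstract edge-cut into such a curve (controlling the embedding and the rotation around the cut), and ruling out or separately reducing cyclically $4$-edge-connected girth-$4$ configurations so that the irreducible core is genuinely a snark, is where the embedding topology and a careful case analysis do the real work.
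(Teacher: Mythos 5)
You should first note that the paper contains no proof of Theorem~\ref{Thm: Mohar_etal P-like}: it is imported verbatim from \cite{inoue2024t_cubic_projective_planar}, so there is no internal argument to compare against and your attempt has to be judged on its own. On its own terms, your plan is the natural one, and the easy half is essentially complete: the Klein-four-group parity argument correctly shows that the three edges of the cut created by a $3$-sum receive three distinct colours in any proper $3$-edge-colouring, hence $G_1\oplus_3 G_2$ is class $1$ if and only if both summands are; combined with Tait's theorem for the planar summands this gives that every $P$-like graph is class $2$.

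The converse, however, is a programme rather than a proof, and the gaps you flag yourself are genuine. (i) The most serious one: a cyclically $4$-edge-connected, girth-$4$, projective planar cubic class-$2$ graph is reached neither by Theorem~\ref{Thm: Mohar_etal projective plane} (which requires girth at least $5$) nor by your $3$-sum reduction (the cut around a $4$-cycle has size $4$, not $3$); unlike triangle contraction, reducing a $4$-cycle (e.g.\ by the $C_4$-swap of Lemma~\ref{Lemma: swap}) does not preserve class $2$ in both directions, so this case needs a separate colouring argument that you have not supplied, and without it the induction does not close. (ii) Your dismissal of parallel edges and $2$-edge-cuts by ``series/parallel reductions'' does not by itself keep you inside the $P$-like framework, because the interface of a $2$-edge-cut is not a $3$-sum and the definition of $P$-like only allows $3$-sums; this is repairable (if $\{u_1u_2,v_1v_2\}$ is a $2$-cut with $u_1,v_1$ on side $A$, then $\partial(A\setminus\{u_1\})$ is a nontrivial $3$-cut, so one can always shift to a genuine $3$-sum), but the step must actually be made. (iii) Realizing a nontrivial $3$-edge-cut by a contractible simple closed curve, so that one side is planar and the other projective planar, requires showing that a minimal edge cut of a graph embedded in the projective plane is dual to a single null-homologous cycle (using that both sides of a minimal $3$-cut in a $3$-graph are connected and that the projective plane carries no two disjoint noncontractible simple closed curves); you assert the conclusion without this argument. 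Until (i) in particular is resolved, the hard direction remains incomplete.
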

    
	In particular, Theorem~\ref{Thm: Mohar_etal P-like} implies
    that projective planar $3$-graphs without a Petersen-minor are class 1 graphs. The following theorem is the main result of this paper. 
    
	\begin{theo} \label{Thm: main}
		Let $r \leq 5$ and $G$ be a projective planar $r$-graph. 
		If $G$ has no Petersen-minor, then $G$ is class $1$. 
	\end{theo}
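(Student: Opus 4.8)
The plan is to induct on $r$, with the case $r=3$ supplied by Theorem~\ref{Thm: Mohar_etal P-like}: a projective planar $3$-graph with no Petersen-minor cannot be $P$-like, since every $P$-like graph is built from the Petersen graph by $3$-sums and hence contains a Petersen-minor, so it is class $1$. For $r\in\{4,5\}$ the strategy is to delete a single perfect matching and descend to $r-1$. The core reduction I would establish is: \emph{every projective planar $r$-graph $G$ with no Petersen-minor has a perfect matching $M$ such that $G-M$ is an $(r-1)$-graph.} Granting this, $G-M$ is $(r-1)$-regular, is again projective planar and Petersen-minor-free (both properties pass to subgraphs, since a minor of a subgraph of $G$ is a minor of $G$ and the class of projective planar graphs is minor-closed), and is an $(r-1)$-graph by construction; by the induction hypothesis $G-M$ is class $1$, and adding $M$ as one further color class yields a proper $r$-edge-coloring of $G$. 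Recall that $r$-graphs are taken on an even number of vertices, so the odd-cut condition makes $G$ matching-covered and the deletion of a perfect matching meaningful.

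To produce $M$ I would first dispose of tight cuts by a nested induction on $|V(G)|$. Call an odd cut $\partial_G(X)$ \emph{tight} if $|\partial_G(X)|=r$, and nontrivial if both $|X|\geq 3$ and $|V(G)\setminus X|\geq 3$. If $G$ has a nontrivial tight cut $\partial_G(X)$, form $G_1=G/(V(G)\setminus X)$ and $G_2=G/X$. Each $G_i$ is a smaller projective planar $r$-graph with no Petersen-minor, since both are minors of $G$ and contracting one side of a tight cut preserves the $r$-graph property (the new vertex has degree exactly $r$). By induction each $G_i$ is class $1$; as the $r$ edges of $\partial_G(X)$ receive $r$ distinct colors at the contracted vertex in each $G_i$, one may permute the colors of $G_2$ to agree with those of $G_1$ on $\partial_G(X)$ and glue the two colorings into a proper $r$-edge-coloring of $G$. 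Hence it suffices to treat the case in which $G$ has no nontrivial tight cut.

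In that case the only possible failure is easy to localize. Each trivial cut $\partial_G(v)$ loses exactly its matching edge and becomes a cut of size $r-1$ in $G-M$, while every nontrivial odd cut of $G$ has size at least $r+2$ (its size is congruent to $r$ modulo $2$, and $r$ is excluded). Thus $G-M$ fails to be an $(r-1)$-graph only if some nontrivial odd cut $\partial_G(X)$ of size $r+2k$ meets $M$ in more than $2k+1$ edges; for $r=4$ this says some nontrivial $6^+$-cut is reduced to a bridge, and for $r=5$ that some $7^+$-cut is reduced to an odd $2$-cut. I would rule this out by choosing $M$ from the fixed projective-planar embedding as a suitable face-based $T$-join/matching that crosses each separating cycle a controlled number of times, exploiting that the surface has a single crosscap to bound the interaction with the noncontractible cuts. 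If no admissible matching existed, I would extract from the family of over-crossed cuts a subgraph contracting onto the Petersen graph, a contradiction; here Theorem~\ref{Thm: Mohar_etal projective plane}, that the Petersen graph is the only projective-planar snark, is the mechanism converting ``no good matching'' into ``Petersen-minor'' for the cubic remainder.

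The main obstacle is exactly this last step: guaranteeing a perfect matching of a projective-planar $r$-graph with no nontrivial tight cut that does not over-concentrate on any nontrivial odd cut. Unlike the planar case, where Conjecture~\ref{Conj: Seymour exact} is already known for $r\leq 8$ and a good matching is readable from the planar dual, the crosscap admits noncontractible odd cuts that a matching may cross several times, and controlling all of these simultaneously is delicate. I expect the decisive input to be a structural dichotomy for projective-planar graphs with no Petersen-minor: either $G$ is planar, whence the planar result applies directly, or its nonplanarity is concentrated near the crosscap in a bounded way, which both permits the embedding-guided construction of $M$ and forces a Petersen-minor whenever the construction fails.
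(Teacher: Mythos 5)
Your reduction skeleton is sound as far as it goes: the base case $r=3$ via Theorem~\ref{Thm: Mohar_etal P-like} (a $P$-like graph visibly contains a Petersen-minor), the splitting along non-trivial tight cuts with colour permutation and gluing, and the parity bookkeeping showing that $G-M$ fails to be an $(r-1)$-graph only when some non-trivial odd cut of size $r+2k$ meets $M$ in at least $2k+3$ edges --- all of this is correct and, in essence, also appears in the paper (Theorem~\ref{Thm: min counterexample}(1) and the induction inside Lemma~\ref{Lemma: e-coloring Mate}). The problem is that everything you have actually proved is the easy part. The entire content of the theorem is concentrated in your italicised claim that a perfect matching $M$ with $G-M$ an $(r-1)$-graph exists, and for that you offer only intentions: ``a suitable face-based $T$-join that crosses each separating cycle a controlled number of times,'' and ``extract from the family of over-crossed cuts a subgraph contracting onto the Petersen graph.'' Neither is a construction or an argument; no candidate matching is specified, no bound on crossings is derived from the single crosscap, and no mechanism is given for turning a family of over-crossed odd cuts into a Petersen-minor. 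The invocation of Theorem~\ref{Thm: Mohar_etal projective plane} as ``the mechanism converting no good matching into Petersen-minor'' is not connected to any cubic graph you have exhibited. You acknowledge this yourself, so the proposal is a plan with its central lemma missing, not a proof.

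For comparison, the paper never constructs such a matching directly. It takes a minimum counterexample, and whenever a perfect matching (or $T$-join in an $e$-coloring) over-concentrates on an odd cut, it records that cut as a \emph{mate} (Lemma~\ref{Lemma: e-coloring Mate}, adapted from Guenin). The mates are then played against the projective-planar embedding: each mate forces edges of the relevant $T$-joins into specified faces, which forces faces adjacent to $2$- and $3$-faces to be large (Lemma~\ref{Lemma: (r-2)-edge}, Lemmas~\ref{Lemma: e-coloring triangle}--\ref{Lemma: 6-face 2-faces incident}), and a discharging argument against Euler's formula yields the contradiction. If you want to salvage your approach, you would need to either prove your matching-existence claim outright (which is essentially Conjecture~\ref{Conj: Petersen minor} restricted to this setting, i.e.\ the theorem itself) or replace it by this kind of indirect, embedding-driven counting; the auxiliary constructions you would need (contracting a face and re-attaching edges, $C_n$-swaps as in Lemma~\ref{Lemma: swap}) are where the $4$- and $5$-specific work lives, and none of it is present in your write-up.
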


Theorem~\ref{Thm: main} will be proved inductively. The case $r=5$ relies on the case $r=4$,
which relies on Theorem~\ref{Thm: Mohar_etal projective plane}. 
The next section gives some basic facts and structural properties 
of possible minimum counterexamples which apply to both cases $r=4$ and $r=5$. 
Section~\ref{Sec: Proof main Thm 4} gives a proof for the case $r=4$ and Section~\ref{Sec: Proof main Thm 5} for the case $r=5$. The paper concludes with some 
final remarks in Section~\ref{Sec: concluding remarks}.
    
%%%%%%%%%%%%%%%%%%%%%%%%%%%%%%%%%%%%%%%%%%%%%%%%%%%%%%

	\subsection{Preliminaries}
	
Let $G$ be an $r$-graph. An odd cut $\partial_G(X)$ is a \emph{non-trivial tight cut} if 
$\vert \partial_G(X) \vert = r$ and $\vert X \vert, \vert V(G) \setminus X \vert > 1$. 	The
following lemma is proved by estimates of the number of edges incident to a 2-vertex cut.
In what follows indices are added in the cyclic group of appropriate order.

\begin{lem} [\cite{ma2024_equivalences}]  \label{Lemma-2-vertex-cut}
	For each $ r\geq 3 $, if an $r$-graph $ G $ has a $2$-vertex cut, then either $ G $ has a non-trivial tight cut or $ G_s $ is a circuit of length $ 4 $.
\end{lem}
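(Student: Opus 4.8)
The plan is to fix a $2$-vertex cut $\{u,v\}$ of $G$, let $C_1,\dots,C_t$ (with $t\ge 2$) be the components of $G-\{u,v\}$, and to locate a non-trivial tight cut among the four cuts naturally associated with a single component, the case analysis being driven entirely by the parity of $|V(C_i)|$. Write $n_i=|V(C_i)|$, $a_i=|E_G(V(C_i),u)|$, $b_i=|E_G(V(C_i),v)|$, and $c=|E_G(u,v)|$; since no edge joins two distinct components, counting the edges at $u$ and at $v$ gives $\sum_i a_i=\sum_i b_i=r-c$, and (assuming $G$ connected) each component sends at least one edge to $\{u,v\}$. A short degree count using $r\,n_i=2|E(G[V(C_i)])|+(a_i+b_i)$ then yields the four cut sizes $|\partial_G(V(C_i))|=a_i+b_i$, $|\partial_G(V(C_i)\cup\{u\})|=r-a_i+b_i$, $|\partial_G(V(C_i)\cup\{v\})|=r+a_i-b_i$, and $|\partial_G(V(C_i)\cup\{u,v\})|=2(r-c)-a_i-b_i$.

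Next I would fix one component $C_1$ and split on the parity of $n_1$. If $n_1$ is even, then $V(C_1)\cup\{u\}$ and $V(C_1)\cup\{v\}$ both have odd cardinality, so the $r$-graph condition applied to them gives $r-a_1+b_1\ge r$ and $r+a_1-b_1\ge r$, forcing $a_1=b_1$. Hence $|\partial_G(V(C_1)\cup\{u\})|=r$, and since $n_1\ge 2$ one side has at least $3$ vertices while the other contains $v$ together with the remaining components and so has at least $2$; this is therefore a non-trivial tight cut.

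If $n_1$ is odd, then $V(C_1)$ and $V(C_1)\cup\{u,v\}$ both have odd cardinality, so the $r$-graph condition gives $a_1+b_1\ge r$ and $2(r-c)-a_1-b_1\ge r$. Together these force $c=0$ and $a_1+b_1=r$, so both $\partial_G(V(C_1))$ and $\partial_G(V(C_1)\cup\{u,v\})$ are tight, and it remains only to decide when one of them is non-trivial. If $n_1\ge 3$, then $\partial_G(V(C_1))$ is already non-trivial. Otherwise $n_1=1$; writing $m=\sum_{j\ge 2}n_j$, if $m\ge 2$ the tight cut $\partial_G(V(C_1)\cup\{u,v\})$ has both sides of size at least $2$ and is non-trivial. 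The only remaining case is $n_1=m=1$: then $t=2$ and both components are single vertices, so $G$ has exactly the four vertices $u,v$ and the two singletons, with $c=0$ and each singleton joined only to $u$ and $v$; hence $G_s$ is a circuit of length $4$.

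I expect the main obstacle to be the careful bookkeeping in the degenerate sub-cases of the odd branch — in particular verifying that a tight cut is genuinely \emph{non-trivial} (both sides larger than one) rather than a trivial star cut around a single vertex — and pinning down that the unique configuration escaping a non-trivial tight cut is exactly $G_s=C_4$. The parity split itself is routine once the four cut-size identities are in hand.
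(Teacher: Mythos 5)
The paper does not actually prove this lemma --- it is imported from the cited reference with only the remark that it ``is proved by estimates of the number of edges incident to a 2-vertex cut'' --- and your argument is a correct, self-contained proof by exactly that method: the four cut-size identities around one component together with the parity split on $|V(C_1)|$. The only step worth tightening is the terminal case $n_1=m=1$, where concluding $G_s=C_4$ requires each singleton to be adjacent to \emph{both} $u$ and $v$ (not merely to $\{u,v\}$); this follows because $b_1=0$ would force $a_1=r$, hence $a_2=0$ and $b_2=r$, disconnecting $G$ into $\{u,w_1\}$ and $\{v,w_2\}$, contrary to connectivity.
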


For a positive even integer $n$ let $C_n = v_1 \dots v_n$ be a circuit of length $n$.
A \emph{$C_n$-swap} of $G$ is the graph that is obtained from 
$G - \{v_{2i}v_{2i+1} \colon i \in [n/2]\}$ 
by adding an edge $\overline{v_{2i-1}v_{2i}}$ for each $i \in [n/2]$. We also say that $G'$ is a $v_1 \dots v_n$-swap of $G$.

\begin{lem} \label{Lemma: swap}
	Let $r \geq 3$, $H$ be a connected simple graph, and let $G$ be an $r$-graph with no $H$-minor and no non-trivial tight cut. If $G'$ is a $C_n$-swap of $G$ for $n \in \{4,6\}$, then $G'$ is an $r$-graph with no $H$-minor. 
	Furthermore, if $G$ is embeddable in some surface $S$, then $G'$ is embeddable in $S$. 
\end{lem}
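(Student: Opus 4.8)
The plan is to treat the three conclusions separately, using that the $C_n$-swap alters only the edges of the circuit. Index the circuit edges as $e_j=v_jv_{j+1}$ (indices mod $n$), so that the swap deletes the ``even'' edges $e_{2i}=v_{2i}v_{2i+1}$ and adds, for each ``odd'' edge $e_{2i-1}=v_{2i-1}v_{2i}$, a parallel copy $\overline{e_{2i-1}}$. A check at each circuit vertex shows it loses exactly one incident (even) circuit edge and gains exactly one added edge, while vertices off the circuit are untouched; hence $G'$ is $r$-regular and $V(G')=V(G)$.

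For the minor statement I would argue that every edge of $G'$ is either an edge of $G$ or a copy $\overline{e_{2i-1}}$ parallel to $e_{2i-1}\in E(G)$, so the underlying simple graph $G'_s$ is a subgraph of $G_s$. Since $H$ is simple, the existence of an $H$-minor depends only on the underlying simple graph and is preserved under passing to a supergraph, so an $H$-minor of $G'$ would yield one of $G$, a contradiction; this part needs neither $n\in\{4,6\}$ nor the tight-cut hypothesis. For embeddability, starting from an embedding of $G$ in $S$ I would draw each new edge $\overline{e_{2i-1}}$ inside a thin disk neighbourhood of $e_{2i-1}$ (a standard edge-doubling creating a bigon); since the odd edges form a matching, hence are pairwise vertex-disjoint, these neighbourhoods can be chosen disjoint, and deleting the even edges afterwards only restricts to a subgraph. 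Thus $G'$ embeds in $S$.

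The substantive part is showing $G'$ is an $r$-graph. For any $X\subseteq V(G)$ with $|X|$ odd, comparing cuts edge by edge gives $|\partial_{G'}(X)|=|\partial_G(X)|-q+p$, where $p$ (resp.\ $q$) is the number of odd (resp.\ even) circuit edges crossing $X$. Since $G$ is an $r$-graph, $|\partial_G(X)|\ge r$, so if $q\le p$ we are immediately done. Because $p+q$, the number of edges of the cycle $C_n$ in the cut, is even, $p$ and $q$ have equal parity; and since $C_n$ has at most $n/2\le 3$ edges of each parity, $q-p\le 2$. Hence the only remaining case is $q-p=2$, giving $|\partial_{G'}(X)|=|\partial_G(X)|-2$.

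To finish, I would rule out $|\partial_{G'}(X)|<r$ in this last case. If it held, then $r\le|\partial_G(X)|\le r+1$; but $|\partial_G(X)|\equiv r|X|\equiv r\pmod 2$ forces $|\partial_G(X)|=r$, so $\partial_G(X)$ is a tight cut. A short case check shows $q-p=2$ is incompatible with $|X|=1$ or $|V(G)\setminus X|=1$ (each of those forces $q-p=0$), so both sides have at least two vertices and $\partial_G(X)$ is a \emph{non-trivial} tight cut, contradicting the hypothesis. I expect this final step to be the main obstacle: it is exactly where the restriction $n\in\{4,6\}$ (via $q-p\le 2$), the parity of odd cuts, and the absence of a non-trivial tight cut must be combined, and where one must be careful that the resulting tight cut is genuinely non-trivial.
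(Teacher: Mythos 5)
Your proof is correct and follows essentially the same route as the paper's: the minor claim via $G'_s\subseteq G_s$, the embedding by doubling the (pairwise non-adjacent) odd edges, and the $r$-graph claim by bounding the drop in $|\partial(X)|$ by $2$ via parity and then invoking the absence of a non-trivial tight cut. Your write-up is in fact more careful than the paper's two-line cut argument, since you explicitly verify that a cut losing two edges cannot be trivial, a point the paper leaves implicit.
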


\begin{proof} Since $G_s'$ is a subgraph of $G_s$, every simple minor of
	$G'$ is a minor of $G$. Hence, $G'$ has no $H$-minor. For $n = 4$ we
	have that $\vert \partial_{G'}(X) \vert \geq \vert \partial_{G}(X) \vert - 2 
	\geq r$. For $n = 6$ it follows that
	$\vert \partial_{G'}(X) \vert \geq \vert \partial_{G}(X) \vert - 3$.
	Thus, by parity $\vert \partial_{G'}(X) \vert \geq \vert \partial_{G}(X) \vert - 2 \geq r$ and hence, $G'$ is an $r$-graph. 
	The $C_n$-swap applied on the embedded graph keeps the embedding. 
\end{proof}

\subsection{Minimum counterexamples} \label{Sec: Pre}
 
We will apply the following definition. 
	
	\begin{defi}\label{def:smaller}
		Let $H$ and $G$ be $r$-graphs. Then 
	 $H$ is \emph{smaller} than $G$ if 
		\begin{enumerate}
			\item $\vert V(H)\vert < \vert V(G)\vert$, or if 
			\item $\vert V(H)\vert = \vert V(G)\vert$, and $H$ has more $3$-edges, or if
			\item $\vert V(H)\vert = \vert V(G)\vert$, $G$ and $H$ have the same number of $3$-edges, and $H$ has more $2$-edges.
		\end{enumerate}
	\end{defi}

Suppose to the contrary that Theorem~\ref{Thm: main} is not true. For $r \in \{4,5\}$ let $H$ be a minimum counterexample with regard to Definition~\ref{def:smaller}. 
We assume that $H$ is not planar, since planar $4$- and $5$-graphs are class $1$, see \cite{guenin2003packing}. 

Let $\Psi(H)$ be an embedding of $H$ into the projective plane.
Since $H$ is not planar and
$r$-graphs are $2$-connected, $\Psi(H)$ has representativity at least $2$. Hence, all facial walks are circuits (in $H$), see e.g. Proposition 5.5.11 \cite{Mohar_Thomassen_Book}. Thus, we can assume that $\Psi(H)$ is a circular 2-cell embedding of $H$ in the projective plane.
Two faces of $\Psi(H)$ are {\em adjacent}, when their facial circuits 
intersect in edges, and they are {\em incident}, when their facial circuits intersect in vertices.

\begin{theo} \label{Thm: min counterexample} $H$ has the following properties: 
\begin{enumerate}
	\item $H$ has no non-trivial tight cut,
	\item $H$ is $3$-connected and thus $\delta(H_s) \geq 3$,
    \item $\vert \partial_H(X) \vert \geq 4$ for every $X \subset V(G)$,
	\item $\mu(H) \leq r-2$.
\end{enumerate}
\end{theo}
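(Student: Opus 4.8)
The plan is to prove each of the four properties by contradiction: from a violating configuration I would construct a strictly smaller graph (in the sense of Definition~\ref{def:smaller}) that is again a projective planar $r$-graph with no Petersen-minor, invoke the minimality of $H$ to obtain a proper $r$-edge-coloring of it, and then recombine this into a proper $r$-edge-coloring of $H$, contradicting that $H$ is a counterexample. Throughout I would use that $r$-graphs have an even number of vertices, so the complement of an odd set is again odd.

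For part~1, suppose $\partial_H(X)$ is a non-trivial tight cut, so $|\partial_H(X)| = r$ and $|X|, |V(H)\setminus X| > 1$. The first step is to show that $H[X]$ and $H[V(H)\setminus X]$ are connected: if $X$ split as $A \sqcup B$ with no edges between $A$ and $B$, then one of $A,B$ is odd (as $|X|$ is odd), so it induces an odd cut of size at least $r$, forcing the other part to have empty boundary and hence to be a component, contradicting connectivity; the same argument applies to the complement. Connectedness of both sides ensures that $H_1 = H/X$ and $H_2 = H/(V(H)\setminus X)$, each obtained by contracting a connected subgraph to a single vertex, remain embeddable in the projective plane; both are minors of $H$, hence Petersen-minor-free, and a routine parity computation on their cuts shows each is an $r$-graph. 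As both are strictly smaller than $H$, minimality makes them class~$1$. Since the $r$ edges of a size-$r$ odd cut receive all $r$ colors in any proper $r$-edge-coloring, I can permute the colors of the coloring of $H_2$ to agree with that of $H_1$ on $\partial_H(X)$; gluing the two colorings yields a proper $r$-edge-coloring of $H$, a contradiction.

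For part~2, since $r$-graphs are $2$-connected it suffices to exclude a $2$-vertex cut. If $H$ had one, Lemma~\ref{Lemma-2-vertex-cut} would give either a non-trivial tight cut, contradicting part~1, or would force $H_s$ to be a circuit of length $4$; but a multigraph whose underlying simple graph is $C_4$ is planar, contradicting that $H$ is non-planar. Hence $H$ is $3$-connected, and since $|V(H)| \geq 4$, removing the at most two neighbors of a vertex of degree at most $2$ in $H_s$ would disconnect it, so $\delta(H_s)\geq 3$. Parts~3 and~4 I would treat together, since a small even cut and a large multiplicity feed into one another. Every odd cut already has size at least $r\geq 4$, so for part~3 only an even cut $\{e_1,e_2\}$ with $e_i = x_iy_i$ and $x_i \in X$ must be excluded; $2$-connectivity forces $x_1 \neq x_2$ and $y_1 \neq y_2$, and parity forces $|X|$ even. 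If $|X|\geq 4$ and $|V(H)\setminus X|\geq 4$, then $\{x_1,x_2\}$ is a $2$-vertex cut, excluded by part~2. The remaining case $|X|=2$ means $x_1x_2$ is an $(r-1)$-edge, which is precisely the configuration excluded in part~4: I would suppress it by deleting $x_1,x_2$ and adding the edge $y_1y_2$, producing a strictly smaller $r$-regular minor $H'$ that is projective planar, Petersen-minor-free, and (after a parity check) an $r$-graph. By minimality $H'$ is class~$1$; coloring $x_1y_1$ and $x_2y_2$ with the color of $y_1y_2$ and placing the remaining $r-1$ colors on the parallel edges $x_1x_2$ extends this to $H$, a contradiction. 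The multiplicity $\mu(H)=r$ is ruled out immediately by connectivity and non-planarity.

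I expect the main obstacle to be the bookkeeping that legitimizes each reduction: verifying simultaneously that the smaller graph is a projective planar $r$-graph with no Petersen-minor and, most delicately, that the colorings recombine consistently. The connectedness-of-both-sides observation in part~1 is the crucial enabler for preserving the embedding under contraction, and the fact that a size-$r$ odd cut uses all $r$ colors is exactly what makes the gluing possible.
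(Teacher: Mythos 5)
Your proposal is correct and follows essentially the same route as the paper: contract both sides of a putative tight cut, use minimality and the fact that a size-$r$ odd cut sees all $r$ colors to glue the colorings, then invoke Lemma~\ref{Lemma-2-vertex-cut} for $3$-connectivity and derive parts 3 and 4 from it. You supply some details the paper leaves implicit (connectedness of both sides of the tight cut, the explicit treatment of the $|X|=2$ case via suppressing the $(r-1)$-edge, and excluding $H_s=C_4$ by planarity rather than by direct colorability), but the underlying argument is the same.
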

	
\begin{proof} 
		1.: Suppose to the contrary that $H$ has a non-trivial tight cut $\partial_H(Y)$.
		Then $H[Y]$ and $H[V(H)\setminus Y]$ are connected. Thus $H \slash Y$ and 
		$H \slash (V(H)\setminus Y)$ are $P$-minor-free $r$-graphs, which 
		are embeddable in the projective plane. Thus, by the minimality of $H$, they are class $1$ and it follows that 
		they have $r$-edge colorings which can be combined to an $r$-edge coloring of $H$, a contradiction. Hence, $H$ has no non-trivial tight cut. 
		
		2 and 3.: If $H$ would have a $2$-vertex cut, then, by
		Lemma~\ref{Lemma-2-vertex-cut}, $H_s$ is a circuit of length $4$. Hence, $H$ is $r$-edge colorable, a contradiction.
		Consequently, $H$ is $3$-connected and $\vert \partial_H(X) \vert \geq 4$ for every $X \subset V(G)$.

	4.: If $\mu(H) \geq r-1$, then $H$ is either the graph on two vertices which are connected by $r$ edges or
	$H$ has a 2-cut, a contradiction to 3. 
\end{proof}

Theorem~\ref{Thm: main} will be proved by using the discharging method. The 2- and 3-faces are the only faces which will receive charge in the discharging process. The following 
statement follows from the fact that the facial walks in $\Psi(H)$ are circuits. 
	
\begin{obs} \label{Obs: 2- and 3-faces}
	For $k \in \{2,3\}$, if $f$ is a $k$-face in $\Psi(H)$, then $f$ is adjacent to $k$ pairwise different faces. 
\end{obs}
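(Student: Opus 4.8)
The plan is to work entirely from the fact, recorded just before the statement, that every facial walk of $\Psi(H)$ is a circuit. Let $f$ be a $k$-face with $k\in\{2,3\}$. Its facial walk $W(f)$ is then a circuit of length $k$, so $f$ has exactly $k$ boundary edges $e_1,\dots,e_k$, pairwise distinct and each traversed exactly once by $W(f)$. Since $\Psi(H)$ is a $2$-cell embedding, each $e_i$ borders exactly two face-sides; let $g_i$ be the face on the side of $e_i$ opposite $f$. The assertion is precisely that $g_1,\dots,g_k$ are $k$ pairwise distinct faces, all different from $f$, so I would split the work into these two claims.

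First, $g_i\neq f$ for every $i$. If an edge $e_i$ had $f$ on both of its sides, then $W(f)$ would traverse $e_i$ twice; but a circuit traverses each of its edges exactly once, a contradiction. Hence every $e_i$ separates $f$ from a genuinely different face $g_i$.

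The remaining, and in my view only substantial, step is that the $g_i$ are pairwise distinct. Suppose instead that two distinct boundary edges $e=e_i$ and $e'=e_j$ of $f$ border one common face $g$ (necessarily $g\neq f$ by the previous step). Here I would use that $k\le 3$: on a circuit of length $2$ or $3$ any two edges share a vertex, so $e$ and $e'$ meet at a vertex $w$ and are consecutive along $W(f)$; thus $f$ occupies a corner at $w$ lying between $e$ and $e'$. Because $W(g)$ is also a circuit, it visits $w$ at most once, so if $g$ borders both $e$ and $e'$ then $e,e'$ are exactly the two edges of $W(g)$ at $w$, whence $g$ likewise occupies a corner at $w$ between $e$ and $e'$. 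Since the corner between two rotation-consecutive edges belongs to a unique face, two distinct faces $f$ and $g$ having such a corner forces $e$ and $e'$ to be consecutive in the rotation at $w$ in both cyclic directions, i.e. $\deg_H(w)=2$. This contradicts $\delta(H_s)\ge 3$ from Theorem~\ref{Thm: min counterexample}(2) (indeed $\deg_H(w)=r\ge 4$).

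Combining the two claims, the $k$ boundary edges of $f$ yield $k$ pairwise distinct adjacent faces, which is the statement. The one point I would take care to justify cleanly is why a face $g$ bordering both $e$ and $e'$ must have a corner at $w$ between them; this is exactly where the circuit property of $W(g)$ (and not merely of $W(f)$) is needed, which is why reducing everything to the circuit property of facial walks is the key enabling observation.
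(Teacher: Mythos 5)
Your proof is correct and follows the same route the paper indicates: the paper gives no proof of this observation, merely asserting that it ``follows from the fact that the facial walks in $\Psi(H)$ are circuits,'' and your argument is a faithful elaboration of exactly that idea. In fact your write-up is slightly more careful than the paper's one-line justification, since the last step (two boundary edges of $f$ cannot border a common face $g$) genuinely requires $\delta(H_s)\ge 3$ (equivalently, $r$-regularity with $r\ge 4$) on top of the circuit property --- the circuit property alone is satisfied, for instance, by a triangle embedded in the plane, where each of the two triangular faces is adjacent to only one other face --- and you correctly identify and invoke that extra ingredient from Theorem~\ref{Thm: min counterexample}.
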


	A multiset $M_1, \dots, M_r$ of $r$ $T$-joins is an \emph{$e$-coloring}
	for $e \in E(G)$ if $M_i \cap M_j \subseteq \{e\}$ for
	$i \not = j \in [r]$ and there is no multiset of $r$ $T$-joins $M_1', \dots,M_r'$ which
	pairwise intersect at most in $e$ and 
	$\vert \{M_i' \colon e \in M_i', i \in [r]\}\vert < \vert \{M_i \colon e \in M_i, i \in [r]\}\vert$. Furthermore, an $e$-coloring $M_1, \dots, M_r$ is \emph{strong} if 
	$\vert \{M_i \colon e \in M_i, i \in [r]\}\vert \leq 3$. 
	
	Let $e \in E(G)$ and $M_1, \dots,M_r$ be an $e$-coloring. For $i \in [r]$, 
	an odd cut $\partial_G(X)$ is a \emph{mate of $M_i$}, if $e \in \partial_G(X)$ and 
	$\vert \partial_G(X) \cap M_j \vert = 1$ for
	all $j \in [r] \setminus \{i\}$.
	The following lemma is a customized version of Lemma 2.6 of \cite{guenin2003packing}. We include the proof for the sake of self containment. 
	
	\begin{lem} \label{Lemma: e-coloring Mate}
		For $e \in E(H)$, 
		if $M_1, \dots ,M_r$ is a strong $e$-coloring, then each of $M_1, \dots,M_r$ has a mate.
	\end{lem}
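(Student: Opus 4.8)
The plan is to fix the edge $e=xy$ together with a strong $e$-coloring $M_1,\dots,M_r$, and to prove that a prescribed $M_i$ has a mate by analysing how the joins cross odd cuts through $e$. Throughout I would use two parity facts about $T$-joins: every $M_j$ meets each odd cut in an odd number of edges (so in at least one), and in particular $|M_j\cap\partial_H(\{v\})|=\deg_{M_j}(v)$ is odd, hence $\ge 1$, for every vertex $v$. For an odd cut $\partial_H(X)$ with $e\in\partial_H(X)$ I would track the quantity $\sum_{j\ne i}\bigl(|M_j\cap\partial_H(X)|-1\bigr)$, a nonnegative even integer that vanishes exactly when $\partial_H(X)$ is a mate of $M_i$.

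First I would dispose of the easy regimes using the two trivial tight cuts $\partial_H(\{x\})$ and $\partial_H(\{y\})$. Writing $m=|\{j:e\in M_j\}|\le 3$, a direct count at $x$ gives $\sum_j\deg_{M_j}(x)\le (r-1)+m$, while each summand is odd and at least $1$; hence $\sum_j(\deg_{M_j}(x)-1)\le m-1$ and is even. For $m\le 2$ this forces every $\deg_{M_j}(x)=1$, so $\partial_H(\{x\})$ is simultaneously a mate of all $M_i$ and we are done. For $m=3$ the value at $x$ (and symmetrically at $y$) is $0$ or $2$; if it is $0$ at either endpoint we again obtain a universal mate, and if it is $2$ then exactly one join is heavy there, which supplies a mate for that single join. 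Thus only the case $m=3$ with both endpoints heavy survives, and there it remains to produce mates for the joins that are light at both $x$ and $y$.

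The engine for the remaining case is a swap argument exploiting the minimality built into the definition of an $e$-coloring. If two joins $M_k,M_l$ both contain $e$ and $M_k\cup M_l$ contains an $x$–$y$ path $P$ avoiding $e$, then $C=P\cup\{e\}$ is an even subgraph, and replacing $M_k,M_l$ by $M_k\triangle C$ and $M_l\triangle C$ yields $T$-joins that avoid $e$, are disjoint from each other, and still meet every other join only in $e$; this lowers the number of joins containing $e$ by two, contradicting minimality. Hence, for any two of the three joins through $e$, the edge $e$ is a bridge separating $x$ from $y$ in their union, and the $x$-side $X_{kl}$ of $(M_k\cup M_l)-e$ is a vertex set whose cut $\partial_H(X_{kl})$ is met by each of $M_k,M_l$ only in $e$.

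The final and hardest step is to assemble these pairwise-tight sets into a single odd cut through $e$ that is a mate of the prescribed $M_i$. I would uncross the sets $X_{12},X_{13},X_{23}$ via submodularity of the cut function to obtain one set $X$ with $e\in\partial_H(X)$ across which all three $e$-joins cross exactly once, then argue that the remaining joins cross exactly once as well, and finally correct the parity of $|X|$ so that $\partial_H(X)$ is an odd cut, using that $H$ has no non-trivial tight cut together with the parity relation for $T$-joins. I expect this assembly to be the main obstacle: the pairwise information does not obviously combine, since three joins can jointly connect $x$ to $y$ even when no pair can, and simultaneously forcing all $r-1$ joins to be tight while keeping $|X|$ odd is exactly where the structure of the configuration must be used. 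An alternative route I would pursue in parallel is to show directly that the failure of a mate for $M_i$ produces a more elaborate alternating structure realising a reducing swap, once more contradicting the minimality of the $e$-coloring.
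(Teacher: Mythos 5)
Your treatment of the easy regimes is sound and essentially matches the paper's first step: the parity count at the two endpoints of $e$ shows that any join that is heavy at $x$ or $y$ gets the trivial cut $\partial_H(\{x\})$ or $\partial_H(\{y\})$ as a mate, and that these trivial cuts are universal mates when no join is heavy there. Your swap along $P\cup\{e\}$ is also a correct use of the minimality built into the definition of an $e$-coloring. But the gap is exactly where you locate it, and as proposed it is fatal rather than merely technical: a mate of a perfect matching $M_i$ must be crossed \emph{exactly once by all} $r-1$ other joins, whereas the sets $X_{12},X_{13},X_{23}$ you extract only control the at most three joins that contain $e$. Uncrossing them gives no handle whatsoever on how the remaining $r-2$ perfect matchings meet the candidate cut, and no purely local rerouting argument can supply one; indeed nothing in the minimality of the $e$-coloring alone forbids a perfect matching without a mate.

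The paper closes this case with a global, inductive input that your proposal never invokes. Since $H$ is a minimal counterexample and Theorem~\ref{Thm: main} is assumed for $r-1$, the $(r-1)$-regular graph $H'=H-M_i$ cannot be an $(r-1)$-graph, for otherwise $H'$ would be class $1$ and hence so would $H$. Therefore $H'$ has an odd cut $\partial_{H'}(Y)$ of size less than $r-1$, hence at most $r-3$ by parity. The $r-1$ joins other than $M_i$ meet $\partial_H(Y)$ in odd, pairwise almost disjoint sets, and a short count then forces $e\in\partial_H(Y)$ and $\vert M_j\cap\partial_H(Y)\vert=1$ for every $j\neq i$, so $\partial_H(Y)$ is the desired mate. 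To repair your argument you need this appeal to the minimality of $H$ together with the case $r-1$ of the main theorem (this is also how the lemma is proved in Guenin's original setting); your proposed uncrossing and your alternative ``more elaborate swap'' both stay inside the $e$-coloring and cannot reach it.
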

	
	\begin{proof} Let $e = xy$ and $l_e = \vert \{M_i \colon e \in M_i, i \in [r]\}\vert$. 
		If $l_e < 3$, then $l_e=1$ and $H$ is class $1$, a contradiction. Thus $l_e=3$ and all but one or
		two of $M_1, \dots ,M_r$ are perfect matchings of $H$. If $M_i$ is not a perfect matching,
		then for one of $x,y$, say for $x$, $\vert \partial_H(x) \cap M_i \vert = 3$ and 
		$\vert \partial_H(x) \cap M_j \vert = 1$ for each $j \not = i$. Thus, $\partial_H(x)$
		is a mate for $M_i$. 
		
		Now assume that $M_1$ is not a perfect matching, $M_i$ is a perfect matching of $H$ and 
		let $H' = H - M_i$.
		For $X \subset V(H)$, $\partial_H(X)$ is an odd cut in $H$ if and only if 
		$\partial_{H'}(X)$ is an odd cut in $H'$. If $\vert \partial_{H'}(Z) \vert \geq r-1$
		for each odd $Z \subset V(H)$, then it follows by induction that $H$ is class $1$, a contradiction. Thus,
		there is an odd cut $\partial_{H'}(Y)$ with $\vert \partial_{H'}(Y) \vert < r-1$.
        If $e\notin M_i$, then $\vert \partial_{H'}(Y) \vert \geq r-3$
		since every perfect matching intersects $\partial_{H'}(Y)$. However, $\partial_{H'}(Y) \cap M_1 \not = \emptyset$ and therefore,
		$e \in \partial_{H'}(Y)$ and $\partial_H(Y)$ is a mate for $M_i$.
        Now assume $e\in M_i$. Without loss of generality, let $e\in M_t, M_s$. Then $\vert\partial_{H'}(Y) \cap M_l\vert=1$ for each $l\in [r]\setminus\{i,t,s\}$. Therefore, $\partial_H(Y)$ is a mate for $M_i$.
	\end{proof}

Let $M_1, \dots, M_r$ be an $e$-coloring
	of $G$, $e=uv$, $\{u,u_1,u_2\} \subset V(G)$ be the vertex set of a triangle $T$, $v \not \in \{u_1,u_2\}$, and let $\partial_G(X_i)$ be a mate for $M_i$. We call $T$ a {\em bad} triangle, if $\vert E(T) \cap \partial_G(X_i) \cap M_i \vert \geq 2$.

\begin{lem} [\cite{guenin2003packing}] \label{Lemma: no bad triangle} If $M_1, \dots, M_r$ is an $e$-coloring of $H$, then for all $i \in [r]$,
 $M_i$ has a mate with no bad triangles.
	\end{lem}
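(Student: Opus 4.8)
The plan is to prove the statement by an extremal argument driven by a local rerouting of the $V(G)$-join $M_i$ along a bad triangle. Among all minimum $e$-colorings (those attaining the value $l_e$) I would first fix one minimizing $\sum_k |M_k|$, and then, since $M_i$ has a mate by Lemma~\ref{Lemma: e-coloring Mate}, choose for the given index $i$ a mate $\partial_H(X)$ of $M_i$ minimizing $|M_i \cap \partial_H(X)|$; orient $X$ so that $u \in X$ and $v \notin X$, where $e = uv$. If $M_i$ is a perfect matching there is nothing to prove, since then every vertex is incident with exactly one edge of $M_i$, so no two edges of any triangle lie in $M_i$; hence assume $M_i$ is not a perfect matching. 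Suppose for contradiction that this mate admits a bad triangle $T$ on $\{u,u_1,u_2\}$ with $v \notin \{u_1,u_2\}$. Since a circuit meets any cut in an even number of edges, a bad triangle meets $\partial_H(X)$ in exactly two edges, and a short check of the three edges $uu_1,uu_2,u_1u_2$ shows these two ``bad'' edges always share a vertex while the remaining edge of $T$ is never in the cut.

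The engine of the argument is the reroute $M_i' := M_i \triangle E(C)$, where $C$ is the $3$-circuit $uu_1u_2$. This is again a $V(G)$-join, because symmetric difference with a circuit preserves the parity of the degree at every vertex. Toggling $T$ deletes the two bad edges from $M_i$ and hence from the cut, while the third edge of $T$ stays outside the cut, so $|M_i' \cap \partial_H(X)| = |M_i \cap \partial_H(X)| - 2$. No edge of $T$ equals $e$ (as $u_1,u_2 \neq v$), so $e \in M_i' \Leftrightarrow e \in M_i$ and $l_e$ is unchanged; moreover the two bad edges lie only in $M_i$, so deleting them preserves the condition $M_i \cap M_k \subseteq \{e\}$. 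Consequently, provided the third edge of $T$ does not collide with another part of the coloring, the family $M_1,\dots,M_i',\dots,M_r$ is again a minimum $e$-coloring, $\partial_H(X)$ remains a mate of $M_i'$, and $|M_i'| < |M_i|$ contradicts the minimality of $\sum_k |M_k|$. This already disposes of the cases where the third edge is in $M_i$ (then $M_i'$ only loses edges) and where it lies in no member of the coloring (then $M_i'$ loses one edge net).

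The one surviving case—and the main obstacle—is a \emph{collision}: the third edge of $T$ is not in $M_i$ but lies in some $M_j$ with $j \neq i$, so that adding it to $M_i$ would violate $M_i \cap M_j \subseteq \{e\}$, and the single toggle is illegal. Here I would perform a compensating double swap, replacing $(M_i,M_j)$ by $(M_i \triangle E(C),\, M_j \triangle E(C))$: this keeps all pairwise intersections inside $\{e\}$, keeps each set a $V(G)$-join, and leaves both $l_e$ and $\sum_k |M_k|$ unchanged, producing another extremal coloring. The difficulty is that the double swap raises $|M_j \cap \partial_H(X)|$ from $1$ to $3$, so $\partial_H(X)$ is no longer a mate of the new $M_i$, which blocks an immediate contradiction and threatens an infinite regress. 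I expect the heart of the proof to be the resolution of exactly this collision—either by re-selecting a mate for the new $M_i$ and tracking a lexicographic potential that strictly decreases under the combined move, or by showing directly that the collision configuration is incompatible with the mate equalities $|M_k \cap \partial_H(X)| = 1$ for $k \neq i$ together with the $r$-graph bound $|\partial_H(X)| \geq r$. Once the collision is ruled out, the rerouting argument closes and every $M_i$ acquires a mate free of bad triangles.
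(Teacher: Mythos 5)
The paper does not prove this lemma at all: it is quoted from \cite{guenin2003packing} without proof, so there is no in-paper argument to measure yours against; it must stand on its own. Judged that way, your proposal is incomplete precisely where the lemma has content. The reroute $M_i \mapsto M_i \triangle E(T)$ closes the argument only when the third edge of the bad triangle lies in no $M_j$, or in $M_i$ itself (a case that in fact cannot occur: the two bad edges share a vertex of $M_i$-degree at least $2$, which by the degree structure from Lemma~\ref{Lemma: e-coloring Mate} must be $u$, so the bad edges are $uu_1,uu_2$ and $\deg_{M_i}(u_1)=\deg_{M_i}(u_2)=1$ forces $u_1u_2\notin M_i$). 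In the remaining case $u_1u_2\in M_j$ for some $j\neq i$, and you concede that the compensating double swap destroys the mate property ($|\partial_H(X)\cap (M_j\triangle E(T))|$ jumps from $1$ to $3$) and merely ``expect'' the collision to be resolvable by an unspecified lexicographic potential or a direct contradiction. Neither is carried out, and neither is routine: $\sum_k|M_k|$ is unchanged by the double swap, the quantity $|M_i\cap\partial_H(X)|$ you minimized is defined relative to a mate the swap invalidates, and the alternative of moving the cut to $X\cup\{u_1,u_2\}$ can produce $|\partial_H(X\cup\{u_1,u_2\})\cap M_k|=3$ for a third index $k$. The collision case is the heart of the lemma, and it remains open.

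There is also a structural mismatch with the statement being proved. The lemma asserts that for a \emph{given} $e$-coloring $M_1,\dots,M_r$ each $M_i$ admits a mate without bad triangles, and the paper applies it to $e$-colorings constructed with prescribed colors on prescribed edges (see the proofs of Lemma~\ref{Lemma: (r-2)-edge} and the face-structure lemmas). Your argument first replaces the given coloring by an extremal one minimizing $\sum_k|M_k|$, and its toggles then change $M_i$ itself. Even if the collision case were settled, you would have shown only that \emph{some} $e$-coloring admits good mates, which is weaker than the claim and not what the later applications need. A proof of the statement as written must keep the $M_k$ fixed and vary the cut, i.e.\ show that a mate with a bad triangle can be exchanged for another mate of the \emph{same} $M_i$, or that some mate never had one.
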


For a face $f$ let $val(f)=\vert \{e \colon 
\exists f' \not = f \text{ such that } e \in E(W(f)) \cap E(W(f')) 
\text{ and } d(f') \geq 4 \} \vert$.

\begin{lem} \label{Lemma: (r-2)-edge}
Let $f$ be a $3^+$-face in $\Psi(H)$, and $u,v \in V(W(f))$. 
If $\vert E_H(u,v) \vert = r-2$, then $d(f) \geq r+1$ and $val(f) \geq r$. 
\end{lem}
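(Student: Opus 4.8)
The plan is to first pin down the local picture around the $(r-2)$-edge. Since $\mu(H)\le r-2$ by Theorem~\ref{Thm: min counterexample}, the set $E_H(u,v)$ accounts for all edges at $u$ incident to $v$ and vice versa, so $u$ and $v$ each carry exactly two further edges. In $\Psi(H)$ two parallel edges consecutive in the rotation at $u$ (equivalently at $v$) bound a $2$-face, so the $r-2$ parallel edges create $r-3$ bigons. A facial circuit of a $3^+$-face cannot traverse two of the parallel edges, since that would repeat $u$ or $v$; hence any $3^+$-face incident to both $u$ and $v$ uses at most one edge of $E_H(u,v)$. I treat the generic case in which $W(f)$ uses exactly one such edge $e_0$, so that $W(f)$ is $e_0$ followed by a $u$–$v$ path $Q$ avoiding $E_H(u,v)$ and $d(f)=|E(Q)|+1$; the case where $W(f)$ uses no edge of $E_H(u,v)$ is analogous with $Q$ replaced by two internally disjoint $u$–$v$ paths. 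It then suffices to bound the length of $Q$ from below.

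For the bound $d(f)\ge r+1$ I would argue by contradiction, assuming $d(f)\le r$, and set $X=V(W(f))$. Counting edges, $H[X]$ contains the $d(f)$ edges of $W(f)$ together with the $r-3$ remaining parallel edges, so $|\partial_H(X)|\le (r-2)\,d(f)-2(r-3)$. When $|X|=d(f)$ is odd and the cut is non-trivial, the $r$-graph property together with the absence of non-trivial tight cuts (Theorem~\ref{Thm: min counterexample}) forces $|\partial_H(X)|\ge r+2$, which eliminates the smallest odd values of $d(f)$ (for instance the triangle case, where $X$ already yields a tight or sub-$r$ cut). The remaining short faces are removed by reduction: using the bigons of the bundle to close up a suitable even circuit on which Lemma~\ref{Lemma: swap} applies, a $C_4$- or $C_6$-swap merges $e_0$ into the bundle, raising the multiplicity of $uv$ and thereby producing a projective-planar $P$-minor-free $r$-graph with strictly more $3$-edges, hence a graph smaller than $H$ in the sense of Definition~\ref{def:smaller}; pulling its $r$-edge-colouring back across the two swapped edges would colour $H$, contradicting minimality. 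Degenerate configurations in which $V(W(f))$ nearly exhausts $V(H)$, so that the cut is trivial, are checked by hand.

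For $val(f)\ge r$, note first that the edge $e_0$ is bounded on its other side by a bigon (Observation~\ref{Obs: 2- and 3-faces} describes the adjacencies), so $e_0$ contributes nothing to $val(f)$; it therefore suffices to show that each of the $d(f)-1\ge r$ edges of $Q$ is shared with a $4^+$-face. Suppose an edge $g$ of $Q$ were shared with a $2$- or $3$-face $f'$. A bordering bigon would produce a second multi-edge attached to the $uv$-bundle and again enable a multiplicity-increasing $C_n$-swap, while a bordering triangle would, together with $X=V(W(f))$, extract an odd cut of size exactly $r$ or smaller. Both outcomes contradict Theorem~\ref{Thm: min counterexample}, so every edge of $Q$ borders a $4^+$-face and $val(f)\ge d(f)-1\ge r$.

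The main obstacle I anticipate is the set of borderline reductions feeding both bounds: verifying that the required short even circuit is genuinely present, that the resulting graph strictly decreases in the order of Definition~\ref{def:smaller}, and above all that an $r$-edge-colouring of the reduced graph can be transferred back to $H$ (this last step may require a preliminary Kempe recolouring to free the correct colours at the endpoints of the swapped edges). Equally delicate is making the edge counts robust against possible chords of $W(f)$ and against the parity restriction on odd-cut sizes, and cleanly separating the non-trivial cuts from the small-graph degeneracies that must be settled directly.
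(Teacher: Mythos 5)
Your proposal abandons the paper's actual mechanism and tries to get both bounds from direct cut-counting plus $C_n$-swap reductions; this route has genuine gaps. For $d(f)\geq r+1$, the cut $\partial_H(V(W(f)))$ only yields a contradiction for $d(f)=3$ (and it is silent for even $d(f)$, while for $r=5$ and $d(f)=5$ your own bound gives $\vert\partial_H(X)\vert\leq 11$, which violates nothing). Everything else is deferred to a swap reduction that is not worked out and, as described, fails for $r=5$: raising the multiplicity of $uv$ from $r-2=3$ to $4$ \emph{decreases} the number of $3$-edges, so the resulting graph is not smaller in the order of Definition~\ref{def:smaller} and minimality cannot be invoked. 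The $val(f)\geq r$ argument is worse: you try to prove that \emph{every} edge of the path $Q$ borders a $4^+$-face, i.e.\ $val(f)\geq d(f)-1$. That is strictly stronger than the lemma and is not available — the paper's discharging explicitly tolerates, e.g., a $6$-face (for $r=4$) or a $7^+$-face (for $r=5$) that satisfies the lemma's hypothesis and is still adjacent to further $2$-faces through edges of $Q$. Your supporting claims do not hold up: a second $2$-face on an edge $g$ of $Q$ need not be ``attached to the $uv$-bundle'' (its endpoints can be disjoint from $\{u,v\}$, so no multiplicity-increasing swap on a short circuit is available), and appending a triangle's apex to $V(W(f))$ flips the parity of the set and does not extract an odd cut of size at most $r$.

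The missing idea is the one the paper's proof is built on: delete $E_H(u,v)$, suppress $u$ and $v$ to obtain a smaller $P$-minor-free projective planar $r$-graph, color it by minimality, and lift to a \emph{strong $e$-coloring} $M_1,\dots,M_r$ of $H$ in which the $r-2$ parallel edges absorb all $r$ colors. Lemma~\ref{Lemma: e-coloring Mate} then supplies a mate $\partial_H(X_i)$ for each $M_i$; since each mate separates $u$ from $v$ it contains all of $E_H(u,v)$, and the mate condition forces $W(f)$ to carry, for each $i$, an edge of $M_i$ outside $E_H(u,v)$ — these $r$ edges are pairwise distinct, giving $d(f)\geq r+1$. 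Finally, Lemma~\ref{Lemma: no bad triangle} shows the face across each of these $r$ edges contains a second matching edge and hence has size at least $4$, which is exactly $val(f)\geq r$; no claim about the remaining edges of $W(f)$ is needed. Without the coloring/mate machinery your counting cannot close either bound.
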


\begin{proof} 
We first show that there is a strong $e$-coloring for each $e \in E_H(u,v)$. 
Let $H'$ be the
graph obtained from $H - E_H(u,v)$ by suppressing
$u$ and $v$. Then $H'$ has no $P$-minor and, by Theorem~\ref{Thm: min counterexample}, $H'$ is an $r$-graph. By induction hypothesis, $H'$ has an $r$-edge coloring, which easily can be modified to a strong $e$-coloring of $H$ for each $e \in E_H(u,v) = \{e_1, \dots,e_{r-2}\}$.
	
Let $N_H(u)=\{v, u_1, u_2\}$, $N_H(v)=\{u, v_1,v_2\}$, and $u_i, v_i$ be in the same face for $i \in \{1,2\}$. Since $\delta_s(H) \geq 3$, $u_1\neq u_2$ and $v_1\neq v_2$. 
There is a strong $e_1$-coloring, such that $e_1$ is contained in $M_1, M_2,M_3$, $e_i$ is contained in  $M_{i+2}$ for $i \in \{2, \dots, r-2\}$, and each of $u$ and $v$ is incident to either three edges of $M_1$ or three edges of $M_2$ and $M_3, \dots,M_r$ are perfect matchings of $H$. 
		
By Lemma~\ref{Lemma: e-coloring Mate}, there exists a mate $\partial_H(X_i)$ for $M_i$ for each $i \in [r]$. 
Then $E_H(u,v) \subseteq \partial_H(X_i)$ and hence,
$f$ contains at least $r-2$ edges of $\partial_H(X_i)$ for $i \in \{3, \dots,r\}$. Thus, $d(f)  \geq r+1$. 
By Lemma~\ref{Lemma: no bad triangle}, the face sharing an edge of each $M_1, \dots, M_r$ with $f$ contains another edge of $M_1, \dots, M_r$, which is not in $E_H(u,v)$. Thus, these faces are of size $4$ at least and therefore,
$val(f) \geq r$.
	\end{proof}

	%%%%%%%%%%%%%%%%%%%%%%%%%%%%%%%%%%%%%%%
	
	\section{$4$-graphs} \label{Sec: Proof main Thm 4}
	
	In this section we will prove the following theorem. 
	
	\begin{theo} \label{Thm: main 4-graph}
		Let $G$ be a projective planar $4$-graph. 
		If $G$ has no Petersen-minor, then $G$ is class $1$.     
	\end{theo}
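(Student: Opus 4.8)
The plan is to argue by contradiction through the minimum counterexample $H$ fixed above, so that $H$ is a projective planar $4$-graph with no $P$-minor that is class $2$ and enjoys properties 1--4 of Theorem~\ref{Thm: min counterexample}; in particular $H$ has no non-trivial tight cut, is $3$-connected, and $\mu(H)\le 2$. Fixing the circular $2$-cell embedding $\Psi(H)$ and using $4$-regularity ($2|E(H)|=4|V(H)|$) together with Euler's formula for the projective plane $|V(H)|-|E(H)|+|F|=1$, a short computation gives
\[
\sum_{f}\bigl(d(f)-4\bigr) \;=\; 2|E(H)| - 4|F| \;=\; -4 .
\]
Thus, assigning to each face $f$ the initial charge $\operatorname{ch}(f)=d(f)-4$, the total charge is $-4$, and the only faces with negative charge are the $2$-faces (charge $-2$) and the $3$-faces (charge $-1$), while every $4^+$-face has charge at least $0$. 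I would then design discharging rules under which only $4^+$-faces send charge, and only to the $2$- and $3$-faces adjacent to them (consistent with the remark preceding Observation~\ref{Obs: 2- and 3-faces}), and prove that after discharging every face is nonnegative. Since discharging preserves the total, this forces $-4=\sum_f \operatorname{ch}(f)\ge 0$, the desired contradiction, so no counterexample exists.

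The analysis splits by face size. For a $2$-face $f$, its boundary is a digon on a pair $u,v$ with $|E_H(u,v)|=2=r-2$. By Observation~\ref{Obs: 2- and 3-faces} the two faces sharing an edge with $f$ are distinct; each contains $u$ and $v$ on its boundary (and, since $\mu(H)\le2$ and $H$ is $3$-connected, each is a $3^+$-face), so Lemma~\ref{Lemma: (r-2)-edge} with $r=4$ shows that both are $5^+$-faces with $val$ at least $4$. Hence each can send charge $1$ to $f$, covering its deficit $-2$. The crucial feature of Lemma~\ref{Lemma: (r-2)-edge} is that a face $g$ with $val(g)\ge 4$ shares at most $d(g)-val(g)\le d(g)-4$ edges with $2$- and $3$-faces, so sending charge $1$ across each such edge costs at most $d(g)-4=\operatorname{ch}(g)$ and never drives $g$ below $0$. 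For a $3$-face $f$ (deficit $-1$, adjacent to three distinct faces by Observation~\ref{Obs: 2- and 3-faces}), I would invoke the $e$-coloring machinery: applying Lemmas~\ref{Lemma: e-coloring Mate} and~\ref{Lemma: no bad triangle} to an edge of $f$ yields, through its mate and the absence of bad triangles, a neighboring face that carries an edge outside the triangle and is therefore a $4^+$-face capable of serving as donor, so that $f$ receives the missing $1$.

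The step I expect to be the genuine obstacle is the treatment of the $3$-faces together with the simultaneous guarantee that donors remain nonnegative, since a triangle may be surrounded by further small faces and a naive fixed donation can overdraw a shared donor; this is precisely where the extra structure of a minimum counterexample must be exploited, the $e$-coloring/mate/no-bad-triangle apparatus ruling out the worst clusterings of small faces, with Lemma~\ref{Lemma: swap} available to replace a surviving configuration by a smaller or more tractable one that preserves being an $r$-graph, the absence of a $P$-minor, and projective planarity. Any configuration not resolved by discharging should be reducible: suppressing the relevant low-degree vertices produces a smaller projective planar $P$-minor-free graph, which is an $r$-graph by Theorem~\ref{Thm: min counterexample} and hence class $1$ by minimality, and when the reduction yields a cubic graph its $3$-edge-colorability is supplied by Theorems~\ref{Thm: Mohar_etal projective plane} and~\ref{Thm: Mohar_etal P-like}, because a projective planar cubic $3$-graph that were class $2$ would be $P$-like and thus contain a $P$-minor, contradicting the choice of $H$. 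Once every $2$- and $3$-face can be made nonnegative without overdrawing any $4^+$-face, the resulting nonnegative total contradicts $\sum_f \operatorname{ch}(f)=-4$, which proves Theorem~\ref{Thm: main 4-graph}.
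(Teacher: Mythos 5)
Your framework coincides with the paper's: take a minimum counterexample, give each face the charge $d(f)-4$ summing to $-4$, send charge $1$ to each $2$-face from each of its two neighbours (correctly justified by Lemma~\ref{Lemma: (r-2)-edge}, which forces both neighbours of a $2$-face to be $5^+$-faces with $val\geq 4$), and observe that a donor with $val\geq 4$ can afford to pay $1$ across every edge it shares with a small face. That part is sound and is exactly how the paper handles $2$-faces.

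The gap is in the $3$-faces, which you yourself flag as ``the genuine obstacle'' and then only gesture at. Producing, via a mate and Lemma~\ref{Lemma: no bad triangle}, \emph{one} $4^+$-face neighbour of a triangle is not enough: if that single donor must pay the full unit deficit, a $5$-face (charge $1$) adjacent to two such triangles is already overdrawn, and nothing in your sketch excludes a $5$-face adjacent to three or more triangles, or a triangle adjacent to a $4$-face. What the paper actually proves at this point (Lemma~\ref{Lemma: e-coloring triangle}) is a statement about \emph{both} faces $f_1,f_2$ meeting the triangle along the two edges at its apex $v$: each satisfies $d(f_i)\geq 5$, with $val(f_i)\geq 4$ when $|N_G(v)|=3$ (via Lemma~\ref{Lemma: (r-2)-edge}) and $val(f_i)\geq 2$ with the two witnessing $4^+$-faces located away from $vv_i$ when $|N_G(v)|=4$. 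The latter case needs a reduction you do not describe: contract the triangle, delete the two contracted edges coming from $v$, add the edge $x_3x_4$ between the two outside neighbours of $v$, check by cut-counting and Theorem~\ref{Thm: min counterexample} that the result is again a projective planar $4$-graph with no $P$-minor, and lift its $4$-edge-colouring to a strong $e$-colouring whose mates yield the stated faces. Only from this does one get Corollary~\ref{cor:33 and 53} --- a $3$-face is adjacent to no $3$- or $4$-face (so all three of its neighbours are $5^+$-faces and each can pay $\frac{1}{3}$), and a $5$-face is adjacent to at most two $3$-faces (so it loses at most $\frac{2}{3}$) --- which is precisely what makes the ledger close for $5$-, $6$- and $7^+$-faces. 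Without these two statements the discharging cannot be completed, so the proposal is an outline with the decisive step missing rather than a proof.
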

	
	Suppose to the contrary that Theorem~\ref{Thm: main 4-graph} is not true. 
	In the following, let $G$ be a minimum counterexample with regard to Definition~\ref{def:smaller}. 
	As in the previous section, we assume that $G$ is not planar and that $\Psi(G)$ 
	is a circular 2-cell embedding of $G$
	in the projective plane. 
	By Theorem~\ref{Thm: min counterexample}, a
	3-face is not adjacent to a 2-face. 

\subsection{Structure of faces}

    \begin{lem} \label{Lemma: e-coloring triangle}
        Let $f = vv_1v_2$ be a 3-face and for $i \in \{1,2\}$ let $f_i$ and be the face which is adjacent to $f$ by sharing the edge $vv_i$.
If $\vert N_G(v) \vert = 3$, then $d(f_i) \geq 5$ and $val(f_i) \geq 4$.
If $\vert N_G(v) \vert = 4$, then $d(f_i) \geq 5$ and $val(f_i) \geq 2$,
and there are two $4^+$-faces $f_i^1, f_i^2$ and $e_i^j \in W(f_i) \cap W(f_i^j)$ $(j \in \{1,2\})$
such that $e_i^1$ and $e_i^2$ are not adjacent to $vv_i$.
    \end{lem}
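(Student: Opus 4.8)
The plan is to derive both cases from Lemma~\ref{Lemma: (r-2)-edge} (with $r=4$), the difference being that in the second case the required $2$-edge must be manufactured by a local reduction rather than being present in $G$. Two standing observations drive everything. First, since $f$ is a $3$-face, no two vertices of $W(f)$ can be joined by a $2$-edge, for otherwise Lemma~\ref{Lemma: (r-2)-edge} would force $d(f) \geq 5$; in particular all three sides of the triangle are single edges. Second, a $3$-face is not adjacent to a $2$-face, so both $f_1$ and $f_2$ are $3^+$-faces and Lemma~\ref{Lemma: (r-2)-edge} is applicable to them.

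I would settle the case $\vert N_G(v)\vert = 3$ first. As $v$ has degree $4$ but only three neighbours and $\mu(G) \leq 2$, the edge multiplicities at $v$ are $(2,1,1)$, so exactly one neighbour $z$ satisfies $\vert E_G(v,z)\vert = 2$. By the first observation $z \notin \{v_1,v_2\}$, hence $z=w$ is the third neighbour and both edges of $v$ off the triangle join $v$ to $w$. In the rotation at $v$ the triangle edges $vv_1,vv_2$ are consecutive, so the two copies of $vw$ occupy the remaining two (consecutive) slots; thus $f_1$ is bounded at $v$ by $vv_1$ and one copy of $vw$, and $f_2$ by $vv_2$ and the other. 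Consequently $v,w \in V(W(f_i))$ with $\vert E_G(v,w)\vert = 2$ for both $i$, and Lemma~\ref{Lemma: (r-2)-edge} yields $d(f_i) \geq 5$ and $val(f_i) \geq 4$.

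For $\vert N_G(v)\vert = 4$ every edge at $v$ is single, so no $2$-edge is available and I would create one. Deleting the two triangle edges $vv_2$ and $v_1v_2$ leaves $v_2$ of degree $2$; suppressing $v_2$ and adding a parallel copy of $vv_1$ to repair the degrees of $v$ and $v_1$ produces a $4$-regular graph $\tilde G$ on $\vert V(G)\vert - 1$ vertices in which $vv_1$ is a $2$-edge. As in Lemma~\ref{Lemma: swap}, $\tilde G_s$ is a minor of $G_s$ (obtained by contracting $v_2$ into an outside neighbour and deleting the two superfluous edges), so $\tilde G$ has no $P$-minor and stays embeddable in the projective plane. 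The genuinely delicate point, which I expect to be the main obstacle, is to verify that $\tilde G$ is again a $4$-graph, i.e. that the reduction creates no odd cut of size $<4$; this should follow from Theorem~\ref{Thm: min counterexample} together with the tight-cut dichotomy of Lemma~\ref{Lemma-2-vertex-cut}, by ruling out a non-trivial tight cut through the altered region. Granting this, $\tilde G$ is smaller than $G$, hence class $1$ by minimality, and its $4$-edge-colouring lifts to a strong $(vv_1)$-coloring of $G$ exactly as in the proof of Lemma~\ref{Lemma: (r-2)-edge}.

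With a strong $(vv_1)$-coloring $M_1,\dots,M_4$ in hand I would conclude as in that lemma: Lemma~\ref{Lemma: e-coloring Mate} supplies a mate $\partial_G(X_i)$ for each $M_i$, forcing $f_1$ to meet the cut in enough edges to give $d(f_1)\geq 5$, while Lemma~\ref{Lemma: no bad triangle} guarantees that the faces of $f_1$ across two of the mate edges are $4^+$-faces; checking that these two edges lie away from the shared edge $vv_1$ produces the $4^+$-faces $f_1^1,f_1^2$ with $e_1^1,e_1^2$ not adjacent to $vv_1$, whence $val(f_1)\geq 2$. The symmetric reduction, suppressing the triangle edges at $v_1$ and doubling $vv_2$, gives the same conclusion for $f_2$. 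The weaker bound $val \geq 2$ (against $val \geq 4$ in Lemma~\ref{Lemma: (r-2)-edge}) reflects that only the two matchings avoiding the artificial $2$-edge are controlled relative to $f_i$, and the bookkeeping of which mate edges fall on $f_i$, and on which side, is the routine-but-careful part that remains to be carried out in full.
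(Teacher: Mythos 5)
Your treatment of the case $\vert N_G(v)\vert=3$ is correct and is essentially the paper's argument (the paper is terser, but it reduces to the same application of Lemma~\ref{Lemma: (r-2)-edge} to the $2$-edge $vw$, with both $v$ and $w$ on $W(f_1)$ and $W(f_2)$).

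The case $\vert N_G(v)\vert=4$ breaks down at the very first step, before the points you flag as delicate. Since $G$ is a $4$-graph, $\vert V(G)\vert$ is even (otherwise $\partial_G(V(G))=\emptyset$ would violate the odd-cut condition). Your graph $\tilde G$ is $4$-regular on $\vert V(G)\vert-1$ vertices, which is odd; hence $\tilde G$ is not a $4$-graph (again take $X=V(\tilde G)$) and, more to the point, it cannot be class~$1$, since each colour class of a proper $4$-edge-colouring of a $4$-regular graph is a perfect matching. So the minimality of $G$ gives you nothing for $\tilde G$, and there is no $4$-edge-colouring to lift. This is not repairable by a more careful cut analysis: any reduction that deletes exactly one vertex is doomed for parity reasons. (Two further, secondary problems: if $v_2$'s two non-triangle edges go to the same vertex, suppressing $v_2$ creates a loop; and the claim that $\tilde G$ satisfies the odd-cut condition is asserted rather than proved.) The paper avoids all of this by contracting the whole triangle $V(f)$ to a single vertex $z$ (net loss of two vertices, parity preserved), then deleting the two edges $zx_3,zx_4$ inherited from $v$ and adding the edge $x_3x_4$; the resulting $G'$ is a smaller projective planar $4$-graph with no $P$-minor, its $4$-edge-colouring is pulled back to a strong $vv_i$-colouring of $G$ in which $vv_1$ and $vv_2$ see all four colours, and the mates from Lemma~\ref{Lemma: e-coloring Mate} then yield the two $4^+$-faces with the required non-adjacency of $e_i^1,e_i^2$ to $vv_i$.
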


    \begin{proof}    
    Let $\partial_G(V(f)) = \{e_1, \dots,e_6\}$,
    with $e_i = v_1x_i$ for $i \in \{1,2\}$, $e_i = vx_i$ for $i \in \{3,4\}$, and $e_i = v_2x_i$ for $i \in \{5,6\}$.

    If $\vert N_G(v) \vert = 3$, then $x_3 = x_4$ and hence, $\vert E_G(v,x_4) \vert = 2$. Therefore,  $d(f_i) \geq 5$ and $val(f_i) \geq 4$, by Lemma~\ref{Lemma: (r-2)-edge}.

    Let $x_3 \not = x_4$, and  
    $z$ be the vertex which $V(f)$ is contracted to in $G/V(f)$ and let $e_i' = zx_i$. 
    Let $G' = ((G/V(f)) - \{ e_3',e_4' \} )+ x_3x_4$.
    Let $X,Y$ be odd sets in $G$ where $X \subseteq V(G) \setminus V(f)$ and $Y \subset V(G)$ with $V(f) \subseteq Y$. Let $X^-$ and $Y^-$ be the 
    corresponding sets in $G/V(f)$, where $z \in Y^-$. By Theorem~\ref{Thm: min counterexample},
    $6 \leq \vert \partial_G(X) \vert = \vert \partial_{G/V(f)}(X^-)\vert$ and 
    $6 \leq \vert \partial_G(Y) \vert = \vert \partial_{G/V(f)}(Y^-) \vert$. Hence,  
    $\vert \partial_{G'}(Z) \vert \geq 4$ for any odd set $Z \subset V(G')$ and 
    $G'$ is a projective planar 4-graph.
    The graph $G'$ can also be obtained from $G$ by minor operations and
    therefore, $G'$ has no $P$-minor.

    By the induction hypothesis, $G'$ is class 1.
    Let $e \in \{vv_1,vv_2 \}$ and $e_2,e_3\in f_1$.
    Without loss of generality, suppose $e = vv_1$.
    Any $4$-edge coloring of $G'$ extends to a strong $e$-coloring of $G$, such that there are two perfect matchings which do not contain the edges $v_1v_2$, $e_2$ and $e_3$. Furthermore, the edges $vv_1$ and $vv_2$ get all four colors. Thus by Lemma~\ref{Lemma: e-coloring Mate}, $f_1$ is adjacent to two $4^+$-faces $f_1^1, f_1^2$ and $e_1^j \in W(f_1) \cap W(f_1^j)$ $(j \in \{1,2\})$ such that $e_1^1$ and $e_1^2$ are not adjacent to $vv_1$. By symmetry, we have the same conclusion for $f_2$.
    \end{proof}

    \begin{figure}[ht]%
	\centering
	
	\includegraphics[height=0.4\textwidth]{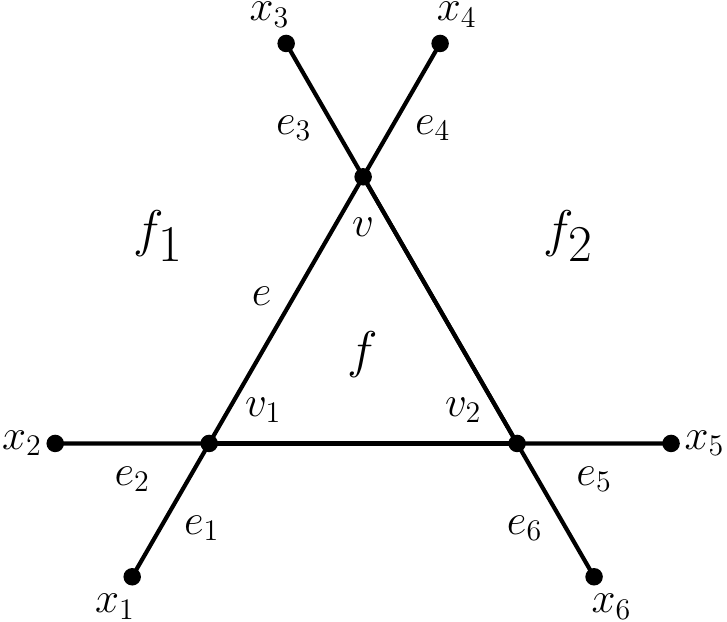}

	\caption{Lemma~\ref{Lemma: e-coloring triangle} if $\vert N_G(v) \vert = 4$.
    }\label{fig: e-coloring triangle}
\end{figure}

    \begin{cor}\label{cor:33 and 53}
       A $3$-face is not adjacent to a $3$- or a $4$-face. A $5$-face is adjacent to at most two $3$-faces.
    \end{cor}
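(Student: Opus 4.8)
The plan is to derive both claims directly from Lemma~\ref{Lemma: e-coloring triangle}, so I first record that every vertex of $G$ has exactly $3$ or $4$ neighbours. Indeed, $G$ is $4$-regular and $\mu(G)\le 2$ by Theorem~\ref{Thm: min counterexample}, so a vertex $v$ has between $2$ and $4$ neighbours; and if $\vert N_G(v)\vert = 2$ then both edge-classes at $v$ are $2$-edges, so deleting the two neighbours of $v$ isolates $v$, contradicting the $3$-connectivity of $G$. Hence $\vert N_G(v)\vert \in \{3,4\}$, and Lemma~\ref{Lemma: e-coloring triangle} is applicable at every vertex of every $3$-face.

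For the first statement, let $f = vv_1v_2$ be a $3$-face. Applying Lemma~\ref{Lemma: e-coloring triangle} with $v$ as the distinguished vertex shows that the two faces sharing $vv_1$ and $vv_2$ have size $\ge 5$, and applying it again with $v_1$ in the role of $v$ shows in addition that the face sharing $v_1v_2$ has size $\ge 5$. Since $f$ is adjacent to exactly three pairwise distinct faces by Observation~\ref{Obs: 2- and 3-faces}, all three are $5^+$-faces, so $f$ is adjacent to neither a $3$-face nor a $4$-face.

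For the second statement, let $g = w_1\dots w_5$ be a $5$-face and let $S \subseteq E(W(g))$ be the set of edges that $g$ shares with a $3$-face; since each edge of $g$ lies on exactly one other face and each adjacent $3$-face contributes at least one edge to $S$, it suffices to show $\vert S\vert \le 2$. Fix $a \in S$, shared with a triangle $T$, and let $p$ be an endpoint of $a$. I apply Lemma~\ref{Lemma: e-coloring triangle} to $T$ at $p$, so that $g$ plays the role of $f_1$, and split into cases. If $\vert N_G(p)\vert = 3$, then $val(g) \ge 4$, so at least four of the five edges of $g$ lie on $4^+$-faces and already $\vert S\vert \le 1$. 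If $\vert N_G(p)\vert = 4$, then the two edges of $g$ not adjacent to $a$ — which in the pentagon $g$ are exactly the two edges ``opposite'' $a$ — lie on $4^+$-faces, and hence are not in $S$.

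Thus, unless the first case has already finished the argument, every $a \in S$ forbids its two opposite edges from belonging to $S$. Joining two edges of $g$ whenever they are non-adjacent in the pentagon yields a single $5$-cycle (the pentagram), in which $S$ is an independent set; as $C_5$ has independence number $2$, we get $\vert S\vert \le 2$, and therefore $g$ is adjacent to at most two $3$-faces. The only delicate point is the bookkeeping inside the pentagon — confirming that ``not adjacent to $a$'' selects precisely the two opposite edges and that the resulting non-adjacency graph on the five edges is exactly a $5$-cycle — which is where I would be most careful.
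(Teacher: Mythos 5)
Your proof is correct and matches the paper's route: the corollary is stated there without proof as an immediate consequence of Lemma~\ref{Lemma: e-coloring triangle}, and your argument is precisely the deduction the authors leave implicit (all three faces adjacent to a triangle are $5^+$-faces; and the ``two $4^+$-faces on edges not adjacent to $vv_i$'' conclusion forces the edges of a pentagon shared with triangles to be pairwise adjacent, hence at most two). The preliminary observation $\vert N_G(v)\vert\in\{3,4\}$ is already recorded in Theorem~\ref{Thm: min counterexample} via $\delta(H_s)\geq 3$, so your rederivation is unnecessary but harmless.
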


\subsection{Discharging Procedure}
    
	We may now employ the discharging technique to finish the proof. 
    Assign to each face $f$ of $\Psi(G)$
the initial charge of $d(f) - 4$, which is denoted by $\omega(f)$. As $G$ is a $4$-graph, Euler's formula implies that $|F(G)|=\frac{|E(G)|}{2}+1$, where $F(G)$ denotes the set of faces of $\Psi(G)$. Therefore, 

\begin{equation} \label{Eq: total initial charge 4}
\sum_{f\in F(G)}\omega(f)=2|E(G)|-4|F(G)|=-4.
\end{equation} 

Next we show that  charge can be redistributed according to the following rules, such that 
each face has non-negative charge, in contradiction to equation~\ref{Eq: total initial charge 4}.

    \begin{description}\setlength{\itemsep}{0mm}\setlength{\parsep}{0mm}
    \item[R1] Each $2$-face receives charge 1 from each of its adjacent face;
    \item[R2] Each $3$-face receives charge $\frac{1}{3}$ from each of its adjacent face.
    \end{description}

    The final charge of a face $f$ after redistribution of charge is denoted by $\omega'(f)$.

	By Lemma~\ref{Lemma: (r-2)-edge} and Corollary~\ref{cor:33 and 53}, every face, which is adjacent to a face of size at most 3, has size at least 5. By rules R1 and R2, $2$- and $3$-faces only receive charge and therefore, they have final charge of at least $0$. 
	
	Faces of size 4 are not affected by the discharging procedure, and thus, they have a final charge of $0$. 
	
	For a $5$-face $f$, if $f$ is adjacent to a $2$-face, then by Lemma~\ref{Lemma: (r-2)-edge}, $f$ is adjacent to exactly one $2$-face and no $3$-face. Thus, the charge of $f$ is reduced by 
    at most $1$ by rule R1. If $f$ is not adjacent to a $2$-face, then by Corollary~\ref{cor:33 and 53}, $f$ is adjacent to at most two $3$-faces. 
    Then, by R2, the charge of $f$ is reduced by at most $\frac{2}{3}$. Therefore, $\omega'(f) \geq 0$ in any case. 
	
	For a $6$-face $f$, if $f$ is not adjacent to any $2$-face, then 
    only rule R2 applies and $f$ loses at most $6\times\frac{1}{3}=2$ of charge. Otherwise, by Lemma~\ref{Lemma: (r-2)-edge}, $val(f)\geq 4$, $f$ loses at most $2$ of charge. Thus every $6$-face has a final charge of at least $0$.
	
	For a $7^+$-face $f$, by Theorem~\ref{Thm: min counterexample} and Lemma~\ref{Lemma: (r-2)-edge}, $f$ loses at most $\lfloor\frac{d(f)}{2}\rfloor+\frac{1}{3}(\lceil\frac{d(f)}{2}\rceil-4) \leq d(f)-4$ of charge. Thus, $f$ has final charge of at least $0$.

    Thus, $\sum_{f\in F(G)}\omega'(f) \geq 0$, a contradiction. This completes 
    the proof of Theorem~\ref{Thm: main 4-graph}.
	
	%%%%%%%%%%%%%%%%%%%%%%%%%%%%%%%%%%%%%%%%
	
	\section{$5$-graphs} \label{Sec: Proof main Thm 5}
	
	In this section we will prove the following theorem. 
	
	\begin{theo} \label{Thm: main 5-graph}
		Let $G$ be a projective planar $5$-graph. 
		If $G$ has no Petersen-minor, then $G$ is class $1$.     
	\end{theo}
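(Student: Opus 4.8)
The plan is to run the same minimum-counterexample-plus-discharging scheme as in Section~\ref{Sec: Proof main Thm 4}, now with $r=5$. Assume Theorem~\ref{Thm: main 5-graph} fails, let $G$ be a minimum counterexample in the sense of Definition~\ref{def:smaller}, take $G$ non-planar (planar $5$-graphs are class~$1$), and fix a circular $2$-cell embedding $\Psi(G)$ in the projective plane. Theorem~\ref{Thm: min counterexample} with $r=5$ then supplies the same four structural facts, in particular $\mu(G)\le 3$, so every pair of vertices spans a $1$-, $2$-, or $3$-edge. Since a $3$-edge is exactly an $(r-2)$-edge, Lemma~\ref{Lemma: (r-2)-edge} already controls every $3^+$-face carrying a $3$-edge. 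For the discharging I would assign each face the initial charge $\omega(f)=d(f)-\tfrac{10}{3}$; from $2\vert E(G)\vert=5\vert V(G)\vert$ and Euler's formula for the projective plane one gets $\vert F(G)\vert=1+\tfrac{3}{5}\vert E(G)\vert$, so $\sum_f\omega(f)=2\vert E(G)\vert-\tfrac{10}{3}\vert F(G)\vert=-\tfrac{10}{3}$. Only $2$-faces (charge $-\tfrac43$) and $3$-faces (charge $-\tfrac13$) start negative, while every $k$-face with $k\ge4$ has positive charge $\tfrac{3k-10}{3}$.

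First I would establish the face-structure lemmas that replace Lemma~\ref{Lemma: e-coloring triangle} and Corollary~\ref{cor:33 and 53} for $r=5$, all via the $e$-coloring toolkit of Lemmas~\ref{Lemma: e-coloring Mate} and~\ref{Lemma: no bad triangle}. For a $3$-face $f=vv_1v_2$ I would, as in Lemma~\ref{Lemma: e-coloring triangle}, pass to a smaller projective planar $P$-minor-free $5$-graph obtained by contracting the triangle and locally repairing regularity, take the $5$-edge-coloring guaranteed by minimality, extend it to a strong $e$-coloring of $G$ for a triangle edge $e$, and read off from the resulting mates that the two faces sharing the triangle edges are large; the target is that a $3$-face is adjacent only to $6^+$-faces. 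The genuinely new ingredient is a $2$-face lemma. For a $2$-face on $u,v$ with $\vert E_G(u,v)\vert=3$ the two incident faces contain the $3$-edge and are governed by Lemma~\ref{Lemma: (r-2)-edge}; for $\vert E_G(u,v)\vert=2$ I would delete $u,v$ together with the two parallel edges and reconnect the remaining six half-edges by a matching between $N_G(u)\setminus\{v\}$ and $N_G(v)\setminus\{u\}$ that respects $\Psi(G)$, then argue this smaller graph is again a projective planar $P$-minor-free $5$-graph and use its coloring to produce a strong $e$-coloring for each edge of the $2$-edge. Running the mate and no-bad-triangle lemmas should then force the two faces incident with the $2$-face to be large with $val\ge5$.

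With these lemmas the discharging mirrors the $4$-graph verification. I would use rules sending $\tfrac23$ to each $2$-face from each of its two neighbors and $\tfrac19$ to each $3$-face from each of its three neighbors, so that every $2$- and $3$-face reaches final charge $0$. Because $2$- and $3$-faces are then adjacent only to $6^+$-faces, every $4$- and $5$-face keeps its positive charge untouched, and a $6^+$-face $f$ sends charge only across the at most $d(f)-val(f)$ edges it shares with small faces; with $val(f)\ge5$ on every face incident to a small face, $f$ loses at most $\tfrac23\bigl(d(f)-5\bigr)=\tfrac{2d(f)-10}{3}\le\tfrac{3d(f)-10}{3}$, so $\omega'(f)\ge0$. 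Hence every face ends non-negative, contradicting $\sum_f\omega(f)=-\tfrac{10}{3}$ and proving Theorem~\ref{Thm: main 5-graph}.

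The hard part will be the $2$-face lemma in the case $\vert E_G(u,v)\vert=2$, and, to a lesser extent, the triangle reduction. Unlike the case $r=4$, a $2$-edge is no longer an $(r-2)$-edge: deleting its two edges leaves $u$ and $v$ of degree $3$, so there is nothing to suppress, and the reconnection step \emph{adds} edges, which is not a minor operation. Consequently the two properties that came for free in Lemma~\ref{Lemma: (r-2)-edge}, namely $P$-minor-freeness and the projective embedding of the reduced graph, must now be argued by hand; I expect the absence of non-trivial tight cuts together with the $C_n$-swaps of Lemma~\ref{Lemma: swap} to be the right tools for re-routing the added edges and for certifying that the reduced graph remains a $5$-graph. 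A secondary difficulty is that the discharging carries less slack than for $r=4$, so the exact strength of the $2$-face lemma, in particular the uniform bound $val\ge5$, has to be pinned down precisely rather than merely approximately.
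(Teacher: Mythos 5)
Your setup (minimum counterexample, circular $2$-cell embedding, initial charge $d(f)-\tfrac{10}{3}$ summing to $-\tfrac{10}{3}$) matches the paper exactly, and you correctly identify the crux: for $r=5$ a $2$-edge is no longer an $(r-2)$-edge, so Lemma~\ref{Lemma: (r-2)-edge} no longer controls the faces around a $2$-face. But the two structural targets you then set yourself --- that every $3$-face is adjacent only to $6^+$-faces, and that every face incident to a $2$-face has $val\ge 5$ --- are too strong to be provable with the $e$-coloring toolkit, and the paper's own lemmas show why. The mate argument applied to a contracted triangle or a swapped circuit only certifies that \emph{some} of the surrounding faces are $4^+$-faces (one per mate), never that all of them are $6^+$-faces; accordingly the paper can only prove that a $3$-face is adjacent to at least two $4^+$-faces (Corollary~\ref{Cor: Diamond}), must explicitly handle two adjacent $3$-faces (Lemma~\ref{Lemma: diamond}), $4$-faces adjacent to $3$-faces (Lemmas~\ref{Lemma: 5-case 4-face 3-face antipode 4+} and~\ref{Lemma: 4-face shame}), and heavy $3$- and $4$-faces, i.e.\ $3$- and $4$-faces adjacent to $2$-faces. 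Likewise, for an isolated $2$-face the paper obtains nothing like $val\ge 5$ for its two neighbours; even for the internal $2$-face of a $2$-face $3$-path, Lemma~\ref{Lemma: three 2-edges in a row} only yields $n_1+n_2\ge 4$ and $d(f_i)\ge 4$. Your reconnection idea for the $\vert E_G(u,v)\vert=2$ case (delete $u,v$ and rematch the six remaining half-edges) also adds edges that are not obtained by minor operations, so $P$-minor-freeness of the reduced graph is not automatic, and a matching on six half-edges leaves a vertex of odd degree, so the reduction as described does not even produce a $5$-regular graph.

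Because those two lemmas fail, the discharging you describe collapses: $4$- and $5$-faces do \emph{not} keep their charge (a heavy $4$-face must pay a $2$-face $\tfrac{2}{3}$ while holding only $\tfrac{2}{3}$ of charge, and a $5$-face can be adjacent to two $2$-faces and several $3$-faces), and a single rate of $\tfrac{2}{3}$ per $2$-face and $\tfrac19$ per $3$-face cannot balance the books. The paper instead needs seven context-sensitive rules (R2a--R2c, R3a--R3d), whose amounts depend on whether the small face is heavy, adjacent to another small face, or indirectly adjacent to a heavy triangle, together with roughly ten structural lemmas (bounding the number of $2$-faces around $4$-, $5$-, and $6$-faces, forbidding certain incidences of $2$-face paths, etc.) and a separate maximal-discharging-path analysis for $7^+$-faces. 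So while your skeleton is the right one, the proposal is missing the entire body of case analysis that makes the $r=5$ discharging close, and the shortcuts you propose to avoid it rest on claims the method cannot deliver.
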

	
	Suppose to the contrary that Theorem~\ref{Thm: main 5-graph} is not true. 
	In the following, let $G$ be a minimum counterexample with regard to Definition~\ref{def:smaller}. 
	As in Section~\ref{Sec: Pre} we assume that $G$ is not planar and that $\Psi(G)$ 
	is a circular 2-cell embedding of $G$ in the projective plane. 
	
	By Theorem~\ref{Thm: min counterexample}, a 3-face is adjacent to at most one 2-face. 
	For $k \in \{2,3,4\}$, a $k$-face that is adjacent to a 2-face is called a \emph{heavy} $k$-face.  
	Let $f$ and $f'$ be the two distinct faces which are adjacent to a 2-face.
	If $f$ is a heavy 3-face, then we say that $f'$ is \emph{\adjtwo} to $f$.

\subsection{Structure of faces}

Let $P_s$ be a path of length $m$ in $G_s$.
If $|E_G(v,w)| = 2$ for every edge $vw$ of $P_s$, then 
we say that $P$ is a \emph{2-face $m$-path} in $\Psi(G)$.
Furthermore, a $2$-face of $P$ is \emph{internal} if it is not incident to an end vertex of $P$.

\begin{lem} \label{Lemma: three 2-edges in a row}
    Let $P$ be a 2-face $3$-path and $f$ be the internal
    2-face with adjacent $3^+$-faces $f_1$ and $f_2$. For $i \in \{1,2\}$, let $n_i$ be the number of $4^+$-faces which are adjacent to $f_i$ and not incident to $f$.
    Then,
    
    \begin{enumerate} 
    \item $n_i \geq 1$ and $n_1 + n_2 \geq 4$.
    \item $f_i$ is adjacent to a $4^+$-face $f'_i$ such that one of their common edges is not incident to any vertex of $P$.
    \item $d(f_i) \geq 4$.
    \end{enumerate}
\end{lem}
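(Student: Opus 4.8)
The plan is to mimic the strategy of Lemma~\ref{Lemma: (r-2)-edge}, adapted to a $2$-edge in a $5$-graph (where that lemma does not directly apply, since it needs an $(r-2)$-edge, i.e.\ a $3$-edge). Write $P = a\,b\,c\,d$ with the three $2$-edges $ab,bc,cd$, so that the internal $2$-face $f$ is bounded by the two parallel edges $e,e'$ joining $b$ and $c$, and $f_1,f_2$ are the faces sharing $e$ and $e'$ with $f$. First I would record the local structure forced by Theorem~\ref{Thm: min counterexample}: since $b,c$ have degree $5$, vertex $b$ is joined to $a$ and to $c$ by $2$-edges and has exactly one further edge $bp$ (and symmetrically $c$ has one further edge $cq$); moreover $|N_G(b)|=|N_G(c)|=3$ because $\delta(G_s)\geq 3$, which in particular shows that a $2$-face cannot be adjacent to $f$, so $f_1,f_2$ are genuinely $3^+$-faces. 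One preliminary observation saves work: if $f_i$ were a $3$-face, its facial walk contains $e$ and hence $b,c$, so $f_i=bcx$ for some $x$; then each of the other two faces adjacent to $f_i$ shares an edge incident to $b$ or $c$ and is thus incident to $f$, giving $n_i=0$. Hence claim $(3)$ follows from claim $(1)$, and I may concentrate on $(1)$ and $(2)$.

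To obtain the colouring data I would build a smaller graph $G^\ast$ by deleting $b$ and $c$ (note that parity forces removing both at once) and adding three edges, for instance $ad$ together with $ap$ and $dq$, chosen so that all degrees return to $5$ and so that $G^\ast$ is a \emph{minor} of $G$: with branch sets $\{a,b\}$ and $\{c,d\}$ the edge $ad$ is realised by one $bc$-edge, $ap$ by $bp$, and $dq$ by $cq$. Being a minor, $G^\ast$ has no $P$-minor and is projective planar; it is $5$-regular and smaller than $G$, and one checks as in Lemma~\ref{Lemma: (r-2)-edge}, using the cut bounds of Theorem~\ref{Thm: min counterexample}, that it is again a $5$-graph. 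By minimality $G^\ast$ is class $1$, and I would transfer a $5$-edge-colouring of $G^\ast$ back across the deleted region: the colours of $ad,ap,dq$ dictate the colours on the $ab$- and $cd$-edges and on $bp,cq$, leaving only $e,e'$ uncoloured while producing a colour defect at $b$ and at $c$ (a repeated colour). Since $G$ is a counterexample this cannot be completed to a proper colouring; repairing the two defects by inserting $e,e'$ into the appropriate classes yields a \emph{strong} $e$-colouring $M_1,\dots,M_5$ with $e\in M_1\cap M_2\cap M_3$ and $M_4,M_5$ perfect matchings.

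With a strong $e$-colouring in hand I would apply Lemma~\ref{Lemma: e-coloring Mate} to get a mate $\partial_G(X_i)$ for each $M_i$ and Lemma~\ref{Lemma: no bad triangle} to choose these mates with no bad triangle. The key geometric point is that any mate contains $e$, hence separates $b$ from $c$, and therefore contains the second parallel edge $e'$ as well; since a mate meets each other class in exactly one edge and $e$ already accounts for $M_1,M_2,M_3$, the edge $e'$ must lie in one of the perfect matchings $M_4,M_5$, so the two edges of $f$ lie in different classes. Reading a mate along the facial walks of $f_1$ and $f_2$ then forces, exactly as at the end of the proof of Lemma~\ref{Lemma: (r-2)-edge}, a further matching edge of the mate on each of $f_1,f_2$ that is distinct from $e,e'$ and from the edges at $b,c$; the face on the far side of such an edge is a $4^+$-face, giving $n_i\geq1$ and claim $(1)$, and, using that $a,d$ are themselves tips of $2$-edges so the relevant edge also avoids $a,d$, the stronger incidence statement of claim $(2)$. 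Claim $(3)$ then follows from the preliminary observation.

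I expect two parts to be the real work. The first is the transfer step: producing and justifying the strong $e$-colouring of $G$ from a colouring of $G^\ast$, including the degenerate cases where $p,q,a,d$ coincide (e.g.\ $p=q$ or $p=d$), which alter the added edges and must be treated so that $G^\ast$ stays both a $5$-graph and a minor of $G$. The second is the precise bookkeeping that upgrades the ``at least one $4^+$-face'' statement to $n_1+n_2\geq4$ and to claim $(2)$; here one must argue across all five mates simultaneously and rule out that the forced $4^+$-faces are double-counted or meet the path $P$, which is exactly where the hypothesis that $P$ is a $3$-path, rather than a single $2$-edge, is used.
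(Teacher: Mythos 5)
Your reduction is the paper's reduction in different clothing: deleting $b,c$ and adding $ad$, $ap$, $dq$, justified via the branch sets $\{a,b\}$ and $\{c,d\}$, produces exactly the graph the paper obtains by deleting one parallel $bc$-edge and contracting $E_G(v_1,v_2)$ and $E_G(v_3,v_4)$; the subsequent lift to a strong $e$-coloring, the appeal to Lemma~\ref{Lemma: e-coloring Mate}, and the observation that every mate contains both parallel $bc$-edges all match the paper's argument. Your derivation of part (3) from part (1) (a triangular $f_i$ would force $n_i=0$) is a clean shortcut the paper does not state explicitly, and most of the degenerate identifications you worry about are already excluded by the hypothesis that $|E_G(v,w)|=2$ exactly for each path edge (so $p\notin\{a,c\}$ and $q\notin\{b,d\}$).

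The genuine gap is the one you flag yourself: $n_1+n_2\geq 4$ is never proved, and it is the quantitatively hard half of part (1). The missing mechanism is a comparison between the mates of the different perfect-matching classes. Concretely, with $e_1\in M_1\cap M_2\cap M_3$, $e_2\in M_4$, and $M_3,M_4,M_5$ perfect matchings, the mate $\partial_G(X_5)$ supplies one further cut edge $e_i'$ on each of $f_1,f_2$, both necessarily in $M_5$; then for $j\in\{3,4\}$ the mate $\partial_G(X_j)$ meets $W(f_1)\cup W(f_2)$ in at least two further edges beyond $e_1,e_2$, and since $\vert\partial_G(X_j)\cap M_5\vert=1$ at most one of $e_1',e_2'$ can reappear among them --- this is what prevents double counting and pushes the total to four. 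Without some such step your argument only yields $n_i\geq 1$. Relatedly, for part (2) the paper relies on the specific normalization that one parallel edge of each of $E_G(v_1,v_2)$ and $E_G(v_3,v_4)$ receives the color of $M_5$, so that the $M_5$-edges at all four path vertices lie on $P$ and the cut edges $e_i'$ therefore avoid $V(P)$; your remark that ``$a,d$ are themselves tips of $2$-edges'' gestures at this but does not pin down why the relevant edge also avoids $b$ and $c$.
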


\begin{proof} 
    Let $V(P_s) = \{v_1, \dots,v_4\}$, $E(P_s)= \{v_1v_2,v_2v_3,v_3v_4\}$, and $E_G(v_2,v_3) = \{e_1,e_2\}$.
    Let $G'$ be obtained from $G$ by deleting $e_2$ and contracting $E_G(v_1,v_2)$ and $E_G(v_3,v_4)$.
    Then, $G'$ is projective planar and has no $P$-minor.
    By Theorem~\ref{Thm: min counterexample}, $G'$ is a $5$-graph and by induction hypothesis, $G'$ has a $5$-edge coloring, which easily can be modified to a strong $e_1$-coloring of $G$. 
    We may assume $e_1$ is contained in $M_1, M_2,M_3$, $e_2$ is contained in $M_4$, and $M_3, M_4, M_5$ are perfect matchings of $G$. 

    By Lemma~\ref{Lemma: e-coloring Mate}, there exists a mate $\partial_G(X_t)$ for $M_t$ for all $t \in [5]$. We have $e_1,e_2\in \partial(X_t)$, since $e_1, e_2$ are parallel edges. 
      Thus, both $f_1$ and $f_2$ contain at least one edge of $\partial_G(X_5)$, say $e'_1$ and $e'_2$ respectively. 
    Further, for $i \in \{1,2\}$, the face sharing edge $e'_i$ with $f_i$ contains another edge of $M_5$, which is not $e_1$ or $e_2$. 
      Thus, these faces are of size at least $4$ and they cannot be incident with $v_2v_3$, which implies $n_i \geq 1$. 
      As both $f_1$ and $f_2$ contain at least one edge of $\partial_G(X_j)$ for $j \in \{3,4\}$, 
      at most one of $e'_1$ and $e'_2$ belongs to $\partial_G(X_j)$. 
      Further, together $f_1$ and $f_2$ contain in total at least two edges of $\partial_G(X_j)$, say $e_3$ and $e_4$.

      The face sharing $e_3$ and $e_4$ with $f_1$ and $f_2$ contains another edge of $M_j$ which is not $e_1$ or $e_2$.
      Thus, these faces are of size at least $4$ and they cannot be incident with $v_2v_3$. Thus, $n_1 + n_2 \geq4$. 
    
    Finally we observe that in the $e_1$-coloring, one of the parallel edges of $v_1v_2$ and $v_3v_4$ receive color $M_5$.
      Hence, $e'_1$ and $e'_2$ are not incident to $v_1$ and $v_4$.
      In particular, $d(f_i) \geq 4$.
\end{proof}

\begin{figure}[ht]%
	\centering
	
	\includegraphics[height=0.15\textwidth]{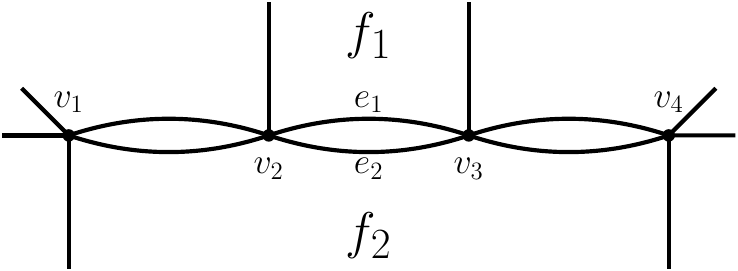}\hspace{3em}
	\includegraphics[height=0.15\textwidth]{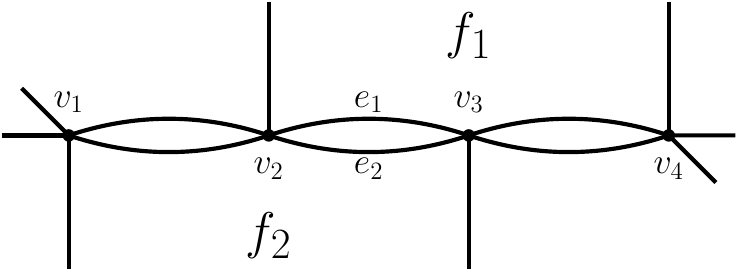}

	\caption{The two non-isomorphic cases for Lemma~\ref{Lemma: three 2-edges in a row}.
    }\label{fig: three 2-edges in a row}
\end{figure}

\begin{lem} \label{Lemma: e-coloring fat triangle}
	Let $f,f'$ be the two distinct faces which are adjacent to a 2-face with vertex set $\{v,w\}$.
	Let $f$ be a heavy 3-face and $f''$ be a $3^+$-face which is adjacent to $f$ and $v \in V(W(f''))$.
	If $\vert N_G(v) \vert = 4$, then $val(f')\geq 3$, $d_G(f') \geq 6$,
	and $val(f'') \geq 3$, $d_G(f'') \geq 5$.
\end{lem}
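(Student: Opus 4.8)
The plan is to follow the same two-stage strategy used in Lemmas~\ref{Lemma: e-coloring triangle} and \ref{Lemma: three 2-edges in a row}: first produce a smaller projective planar $5$-graph by a local minor operation, color it by induction, and lift this to a strong $e$-coloring of $G$; then read off the structure of $f'$ and $f''$ from the mates guaranteed by Lemmas~\ref{Lemma: e-coloring Mate} and \ref{Lemma: no bad triangle}. First I fix notation. Write $E_G(v,w) = \{e_1,e_2\}$ for the two edges bounding the $2$-face, let $f$ be the heavy $3$-face on the $e_1$-side, so $f = vwu$ for some vertex $u$, and let $f'$ be the face on the $e_2$-side. Since $\vert N_G(v)\vert = 4$ and $\deg_G(v) = 5$, the only multiple edge at $v$ is $vw$, so $v$ is incident with single edges $vu, va, vb$; consequently the $3^+$-face $f''$ adjacent to $f$ through $v$ shares the edge $uv$ with $f$, the edge $e_1$ being shared with the $2$-face. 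In particular $f'$ is incident with both $v$ and $w$, and $f''$ is incident with $v$.

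First I would build $G'$ by contracting the triangle $V(f)=\{v,w,u\}$ to a single vertex $z$; because $G[\{v,w,u\}]$ contains the four internal edges $e_1,e_2,uv,uw$, the vertex $z$ has degree $15-2\cdot 4 = 7$. I then perform surgery as in Lemma~\ref{Lemma: e-coloring triangle}: delete two external edges at $z$ incident with a common face and add one edge joining their outer endpoints, restoring $5$-regularity. Bounding $\vert\partial_G(X)\vert$ from below on both sides of the contraction via Theorem~\ref{Thm: min counterexample}, exactly as in Lemma~\ref{Lemma: e-coloring triangle}, one checks that $G'$ is a projective planar $5$-graph; since the added edge is realized by contracting through $u$, $G'$ is a minor of $G$ and hence has no $P$-minor, and $\vert V(G')\vert < \vert V(G)\vert$ makes $G'$ smaller in the sense of Definition~\ref{def:smaller}. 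By the induction hypothesis $G'$ is class~$1$, and I would lift a $5$-edge-coloring of $G'$ to a strong $e_1$-coloring $M_1,\dots,M_5$ of $G$ in which $e_1 \in M_1,M_2,M_3$, $e_2\in M_4$, and $M_3,M_4,M_5$ are perfect matchings, with the matchings arranged to avoid the triangle edges $uv,uw$ where needed. This is the point at which the choice of the two deleted edges in the surgery must be coordinated with the faces $f'$ and $f''$.

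By Lemma~\ref{Lemma: e-coloring Mate} each $M_i$ has a mate $\partial_G(X_i)$, which by Lemma~\ref{Lemma: no bad triangle} may be taken with no bad triangle; since $e_1,e_2$ are parallel, both lie in every $\partial_G(X_i)$. Now I would use that each facial walk is a circuit and therefore crosses an odd cut an even number of times: the face $f'$ contains $e_2\in\partial_G(X_i)$ and hence at least one further edge of the mate cut of each perfect matching, and the face across such a matching edge must carry a second edge of the same matching, so it is a $4^+$-face. Accumulating these forced $4^+$-neighbors over $M_3,M_4,M_5$, together with the fact that $f'$ meets both $v$ and $w$, should yield $d_G(f')\geq 6$ and $val(f')\geq 3$; running the same argument at the triangle edge $uv$, where avoiding a bad triangle at $f=vwu$ pushes matching edges of $f''$ onto distinct $4^+$-faces, should give $d_G(f'')\geq 5$ and $val(f'')\geq 3$. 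The main obstacle I anticipate is twofold: ensuring the surgery yields a genuine $5$-graph with no $P$-minor that remains projective planar (the delicate point being which two external edges at $z$ to remove and where to route the new edge), and extracting the bounds for both $f'$ and $f''$ from a single strong $e_1$-coloring — in particular verifying that the forced $4^+$-faces are pairwise distinct, so that the valences genuinely reach $3$ and the sizes reach $6$ and $5$ respectively, rather than being double-counted.
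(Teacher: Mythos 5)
Your proposal follows essentially the same route as the paper's proof: contract $V(f)=\{u,v,w\}$ to a vertex $z$, delete the two external edges at $v$ (the hypothesis $\vert N_G(v)\vert =4$ is exactly what guarantees their outer endpoints $x_4,x_5$ are distinct, so adding $x_4x_5$ is legitimate and preserves projective planarity), verify the cut condition via Theorem~\ref{Thm: min counterexample}, color the smaller graph by induction, lift to a strong $e$-coloring for $e\in E_G(v,w)$ in which $E_G(v,w)\cup\{uv\}$ receives all five colors and three perfect matchings avoid $uw$ and the edges at $v$ in $f'$ and $f''$, and then read off the forced $4^+$-faces from the mates of Lemma~\ref{Lemma: e-coloring Mate}. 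The two points you flag as delicate are resolved in the paper precisely by this choice of deleted edges and by the distribution of colors on $E_G(v,w)\cup\{uv\}$, so your attempt matches the paper's argument in substance.
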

	
\begin{proof} Let $V(f) = \{u,v,w\}$,
	Let $\partial_G(V(f)) = \{e_1, \dots,e_7\}$,
	with $e_i = ux_i$ for $i \in \{1,2,3\}$, $e_i = vx_i$ for $i \in \{4,5\}$, and $e_i = wx_i$ for $i \in \{6,7\}$.
	Then $x_4 \not = x_5$, since $\vert N_G(v) \vert = 4$.
	
	Let $z$ be the vertex to which $V(f)$ is contracted in $G/V(f)$ 
	and $e_i'=zx_i$.
	Let $G' = ((G/V(f)) - \{e_4',e_5'\}) + x_4x_5$.

    Let $X,Y$ be odd sets in $G$ where $X \subseteq V(G) \setminus V(f)$ and $Y \subset V(G)$ with $V(f) \subseteq Y$. Let $X^-$ and $Y^-$ be the corresponding sets in $G/V(f)$, where $z \in Y^-$. By Theorem~\ref{Thm: min counterexample},
    $7 \leq \vert \partial_G(X) \vert = \vert \partial_{G/V(f)}(X^-)\vert$ and 
    $7 \leq \vert \partial_G(Y) \vert = \vert \partial_{G/V(f)}(Y^-) \vert$. Hence,    
    $\vert \partial_{G'}(Z) \vert \geq 5$ for every odd set $Z \subset V(G')$ and therefore, 
    $G'$ is a projective planar 5-graph.
    Since $G'$ can also be obtained from $G$ by minor operations, $G'$ has no $P$-minor.
	
	By induction hypothesis, $G'$ is class 1. Let $e \in E_G(v,w)$ and $e_5,e_6\in f'$.
	It is easy to see, that any $5$-edge coloring of $G'$ extends to a 
	strong $e$-coloring of $G$, such that there are three perfect matchings which do not contain the edges $uw$ and $e_i$, $i \in \{4,5,6\}$. Further, the edges of $E_{G}(v,w)$ and $uv$ get all five colors.
    Thus, $val(f') \geq 3$, $d_G(f') \geq 6$, and $val(f'') \geq 3$, $d_G(f'') \geq 5$ by Lemma~\ref{Lemma: e-coloring Mate}.
\end{proof}

\begin{figure}[ht]%
	\centering
	
	\includegraphics[height=0.4\textwidth]{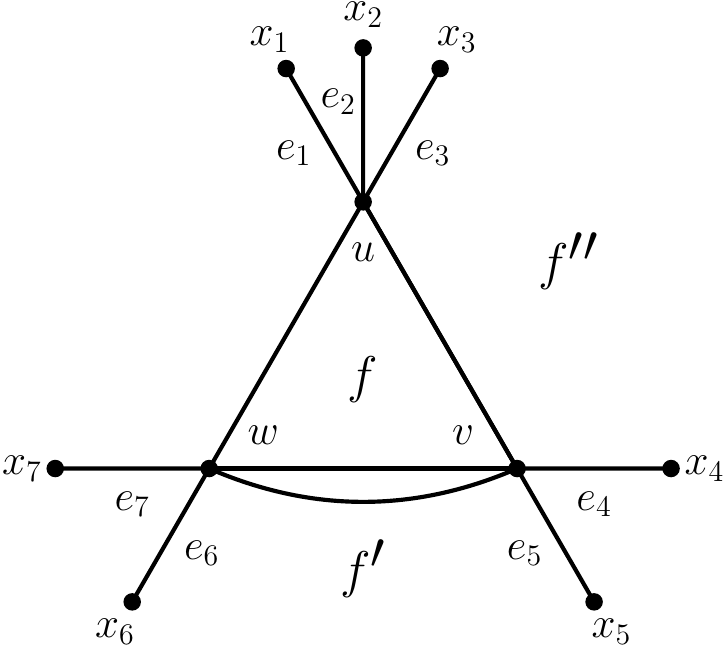}

	\caption{Lemma~\ref{Lemma: e-coloring fat triangle} if $\vert N_G(v) \vert = 4$.
    }\label{fig: e-coloring fat triangle}
\end{figure}

\begin{lem} \label{Lemma: diamond}
Let $f_1$ and $f_2$ be two adjacent 3-faces and $f \not \in \{f_1,f_2\}$. 
If $f$ is adjacent to $f_1$ or $f_2$, then $val(f) \geq 4$. 
\end{lem}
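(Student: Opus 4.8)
Throughout I would use Theorem~\ref{Thm: min counterexample}: $G$ is a $5$-regular, $3$-connected $5$-graph with $\mu(G) \le 3$, no non-trivial tight cut, and $\vert \partial_G(X) \vert \ge 5$ for every odd $X$. Write $f_1 = uvw$ and $f_2 = uvw'$ for the two $3$-faces sharing the edge $uv$, so that $\{u,v,w,w'\}$ spans the diamond $K_4 - ww'$ with outer edges $uw, vw, uw', vw'$. The four faces across these outer edges are the candidates for $f$; by the symmetries $u \leftrightarrow v$ and $f_1 \leftrightarrow f_2$ I may assume $f$ is the face sharing the edge $e := vw$ with $f_1$. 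Since $\delta(G_s) \ge 3$ and $\mu(G)\le 3$, the vertices $u,v,w,w'$ are pairwise distinct and the outer edges are genuine, so the set-up is non-degenerate.

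The reduction is the heart of the proof, and I would model it on Lemma~\ref{Lemma: e-coloring fat triangle}. Contract $V(f_1)=\{u,v,w\}$ to a single vertex $z$. Then $z$ has degree $9$, but the two edges $uw'$ and $vw'$ of the second triangle collapse to a digon $zw'$; hence $w'$ retains degree $5$ and $G'$ inherits a genuine $2$-edge $zw'$. This surviving digon is exactly the extra structure that distinguishes a diamond from a lone triangle. I would then delete two carefully chosen pairs of the remaining seven edges at $z$ and, for each pair, add an edge between their outer endpoints, bringing $z$ back to degree $5$ as in Lemma~\ref{Lemma: e-coloring fat triangle}; choosing each pair to be attached to $z$ through a common vertex of $\{u,v,w\}$ makes each new edge arise from contracting a path of $G$, so $G'$ is a minor of $G$ and therefore has no $P$-minor, and the whole operation respects the projective planar embedding. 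The $5$-graph property of $G'$ follows from Theorem~\ref{Thm: min counterexample}: every odd cut of $G'$ corresponds to an odd cut of $G$, which has size at least $5$ (and is strictly larger where the contraction merges its two sides), so $\vert \partial_{G'}(Z) \vert \ge 5$ for every odd $Z$. Thus $G'$ is a smaller projective planar $P$-minor-free $5$-graph in the sense of Definition~\ref{def:smaller}, and the induction hypothesis yields a proper $5$-edge-coloring.

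Next I would lift. Extending a $5$-edge-coloring of $G'$ over the contracted triangle gives a strong $e$-coloring $M_1,\dots,M_5$ of $G$ with $e=vw$, arranged so that the three edges of $f_1$ receive three prescribed colors, the two edges $uw',vw'$ of $f_2$ receive the two colors of the digon $zw'$, and $M_3,M_4,M_5$ are perfect matchings avoiding the edge $uw$ and the edges of $f$ I wish to control. Lemma~\ref{Lemma: e-coloring Mate} then supplies a mate $\partial_G(X_i)$ for each $M_i$, and Lemma~\ref{Lemma: no bad triangle} lets me take each mate without bad triangles. For each of the three perfect matchings $M_3,M_4,M_5$, the edge of its mate lying on $f$ has, on its far side, a face carrying a second matching edge distinct from the edges of $f_1$ and $f_2$; by the no-bad-triangle property this far face is not one of the incident triangles, so it has size at least $4$. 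This produces three distinct edges of $f$ whose opposite faces are $4^+$-faces. The fourth such edge is where the second triangle $f_2$ must be spent: using that $f_1$ is flanked by the triangle $f_2$ (equivalently, that the digon $zw'$ is a $2$-edge of $G'$, cf. the role of $2$-edges in Lemma~\ref{Lemma: (r-2)-edge} and Lemma~\ref{Lemma: three 2-edges in a row}), I would show the coloring can be arranged with a single defective $T$-join, i.e. with a fourth perfect matching, whose mate contributes a fourth distinct $4^+$-neighbor of $f$. Together these give $val(f)\ge 4$.

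The main obstacle is precisely this last step. Three of the four required $4^+$-neighbors come out of the standard mate argument exactly as in Lemma~\ref{Lemma: e-coloring fat triangle}, so the real work is to certify the fourth, and to do so on the boundary of the single face $f$ rather than split across two faces as in Lemma~\ref{Lemma: three 2-edges in a row}. I expect this to require a careful incidence and parity analysis of how the mate cuts enter and leave $f$ along $vw$, together with a case distinction according to the colors the extended coloring places on $uw, uw', vw'$ and according to whether $w$ and $w'$ share outer neighbors. A secondary obstacle is the verification that the degree-reducing deletions and additions at $z$ can always be chosen to keep $G'$ simultaneously a minor of $G$, projective planar, and a $5$-graph; the no-non-trivial-tight-cut hypothesis of Theorem~\ref{Thm: min counterexample} should rule out the cases where a short odd cut would otherwise appear.
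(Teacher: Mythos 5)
There is a genuine gap here, and you have in fact located it yourself: your argument certifies only three of the four required $4^+$-neighbours of $f$ and defers the fourth to an unspecified ``careful incidence and parity analysis''. The root cause is the choice of reduction. Contracting one triangle and repairing the degree of $z$, modelled on Lemma~\ref{Lemma: e-coloring fat triangle}, yields a strong $e$-colouring in which only three of the five $T$-joins are perfect matchings; the mate argument of Lemma~\ref{Lemma: e-coloring Mate} then produces at most three edges of $W(f)$ whose opposite faces are forced to be $4^+$-faces, and the presence of the second triangle does not by itself make a fourth mate cross $W(f)$ in a new edge. The paper's proof uses a reduction tailored to the diamond: the $C_4$-swap of Lemma~\ref{Lemma: swap} applied to the outer $4$-cycle $v_1v_2v_3v_4$. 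In the swapped graph the classes $E_{G'}(v_1,v_2)$ and $E_{G'}(v_3,v_4)$ are $2$-edges that must receive disjoint colour sets (otherwise $G$ itself is $5$-edge-colourable), and unwinding the swap converts the colouring of $G'$ into a strong $v_1v_2$-colouring of $G$ in which $M_2,\dots,M_5$ are \emph{all} perfect matchings and every edge of the diamond lies in a prescribed class ($v_1v_3\in M_4$, $v_3v_4\in M_5$, $v_1v_4,v_2v_3\in M_1$, and $v_1v_2$ meets $M_1,M_2,M_3$). The four mates $\partial_G(X_2),\dots,\partial_G(X_5)$ then each force a distinct edge of $W(f)$ bounded by a $4^+$-face, giving $val(f)\geq 4$ directly. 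This is the missing idea: you need a reduction that leaves four, not three, perfect matchings in the $e$-colouring.

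A secondary problem is that your reduction itself does not go through as stated. After contracting $V(f_1)$ you delete \emph{two} pairs of edges at $z$ and add two new edges; an odd cut of $G'$ separating $z$ from all four outer endpoints of the deleted pairs can then be smaller by $4$ than the corresponding cut of $G$. Since non-trivial odd cuts of $G$ are only guaranteed to have size at least $7$ (Theorem~\ref{Thm: min counterexample} plus parity), this leaves a bound of $3$, not $5$, so $G'$ need not be a $5$-graph. The analogous step in Lemma~\ref{Lemma: e-coloring fat triangle} is safe precisely because only one pair is deleted there ($7-2\geq 5$). The $C_4$-swap sidesteps this entirely, as Lemma~\ref{Lemma: swap} verifies the $5$-graph property, the embedding, and $P$-minor-freeness in one stroke.
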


\begin{proof}
Let $f_1 = v_1v_2v_3$ and $f_2 = v_1v_3v_4$. By Lemma~\ref{Lemma: e-coloring fat triangle}, $f_1$ and $f_2$ are not heavy.
    Let $G'$ be the $v_1v_2v_3v_4$-swap of $G$.
    By Lemma~\ref{Lemma: swap}, $G'$ is a 
      projective planar $5$-graph with no $P$-minor. By the minimality of $G$
      it has a proper 5-edge coloring $c$.
      If $|c(E_{G'}(v_1,v_2))\cap c(E_{G'}(v_3,v_4))| \geq 1$, then $G$ has a proper 5-edge coloring, so
      we can assume $c(v_3v_4) = 5, c(\overline{v_1v_2}) = 2, c(v_1v_2) = 3, c(v_1v_3) = 4$, and $c(\overline{v_3 v_4}) = 1$.
      Then $M_1 = (c^{-1}(1) \backslash \{\overline{v_3v_4}\}) \cup \{v_1v_2, v_1v_4, v_2v_3\}$ is a $T$-join in $G$ ($T = V(G)$) with $v_1$ and $v_2$ being the only vertices of degree 3.
      Further, $M_2 = (c^{-1}(2)\backslash \{\overline{v_1v_2}\})\cup \{v_1v_2\}, M_3 = c^{-1}(3), M_4 = c^{-1}(4)$, and  $M_5 = c^{-1}(5)$ are perfect matchings of $G$.
      Thus, $M_1, \ldots, M_5$ is a strong $v_1v_2$-coloring of $G$.

    Let $f \in \{f_1', \dots,f_4'\}$, where $f_i'$ is adjacent to $f_1$ or $f_2$
    with $v_iv_{i+1} \in W(f_i)$.
    By Lemma~\ref{Lemma: e-coloring Mate}, there exists a mate $\partial_G(X_j)$ for every $j \in [5]$.
      Since $v_1v_3 \in M_4, v_3v_4 \in M_5, v_1v_4,v_2v_3 \in M_1$ and $v_1v_2 \cap M_l \neq \emptyset$ for $l \in \{1,2,3\}$, and $f$ contains at least one edge from
      $\partial_G(X_t)$ for each $t \in \{2,3,4,5\}$.
      The face sharing an edge of $M_j$ with $f_1'$ and $f_3'$ contains another edge of $M_j$, which is not incident to that edge.
      Thus, these faces are of size at least $4$.
    By symmetry, the same conclusion holds for $f_2'$ and $f_4'$.
\end{proof}

The following corollary is an immediate consequence of Lemma~\ref{Lemma: diamond}.
\begin{cor} \label{Cor: Diamond}
    A 3-face is adjacent to at least two $4^+$-faces.
\end{cor}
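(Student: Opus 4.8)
The plan is to fix a 3-face $F$ and reason about its neighbors. By Observation~\ref{Obs: 2- and 3-faces}, $F$ is adjacent to exactly three pairwise distinct faces, so it suffices to show that at most one of these three has size at most $3$. I would split into two cases according to whether $F$ is adjacent to another 3-face. First, suppose $F$ is \emph{not} adjacent to any 3-face. Then each neighbor of $F$ is either a 2-face or a $4^+$-face, and since (by Theorem~\ref{Thm: min counterexample}) a 3-face is adjacent to at most one 2-face, at least two of the three neighbors are $4^+$-faces, as required.

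Second, suppose $F$ is adjacent to a 3-face $F_1$, sharing an edge $e$, so that $\{F,F_1\}$ is a pair of adjacent 3-faces. Write $F = v_1v_2v_3$ and $F_1 = v_1v_3v_4$ with $e = v_1v_3$; the two edges of $F$ other than $e$ are $v_1v_2$ and $v_2v_3$, and by Observation~\ref{Obs: 2- and 3-faces} they are shared with two faces $g_1,g_2$ that are distinct from one another and from $F_1$ (in particular $g_1,g_2 \notin \{F,F_1\}$). Applying Lemma~\ref{Lemma: diamond} to the pair $\{F,F_1\}$ with $f = g_1$ and then $f = g_2$ yields $val(g_1) \geq 4$ and $val(g_2) \geq 4$. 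Since $g_1,g_2$ are two of the three (distinct) neighbors of $F$, this gives the desired two $4^+$-faces once we translate the $val$ bound into a size bound.

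The only point that is not pure bookkeeping is that $val(g) \geq 4$ forces $g$ to be a $4^+$-face: by definition $val(g)$ counts distinct edges of the facial walk $W(g)$, so $val(g) \geq 4$ implies $d(g) \geq 4$. Hence $g_1$ and $g_2$ are $4^+$-faces, completing the second case and the proof. I do not expect a genuine obstacle here; the statement is immediate from Lemma~\ref{Lemma: diamond}, and the only care needed is the case split to absorb a possible 2-face neighbor together with the elementary passage from $val$ to face size.
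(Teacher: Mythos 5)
Your proof is correct and follows the paper's route: the paper derives the corollary as an immediate consequence of Lemma~\ref{Lemma: diamond}, and your case split merely makes explicit the two ingredients it implicitly relies on (the bound $val(g)\le d(g)$, and the fact, already recorded after Theorem~\ref{Thm: min counterexample}, that a $3$-face is adjacent to at most one $2$-face when it has no $3$-face neighbour).
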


\begin{lem}\label{Lemma: 5-case 4-face 2-face}
    A $4$-face is adjacent to at most one $2$-face.
\end{lem}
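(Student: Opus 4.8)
The plan is to assume that a $4$-face $f$ is adjacent to two distinct $2$-faces and to derive a proper $5$-edge coloring of $G$, contradicting the choice of $G$.

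First I would translate the hypothesis into a statement about parallel edges. Write $W(f) = v_1v_2v_3v_4$. Since $d(f) = 4$, Lemma~\ref{Lemma: (r-2)-edge} shows that no edge of $f$ is a $3$-edge, so every edge of $f$ has multiplicity at most $2$. A $2$-face is a digon, and a $4$-face shares at most one edge with any digon; thus $f$ is adjacent to a $2$-face exactly when some edge $v_iv_{i+1}$ of $f$ is a $2$-edge whose second copy closes the digon across it. Consequently, two distinct adjacent $2$-faces force two distinct edges of $f$ to be $2$-edges, and I split into the \emph{opposite} case $\{v_1v_2,\, v_3v_4\}$ and the \emph{adjacent} case $\{v_1v_2,\, v_2v_3\}$.

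In both cases I apply the $C_4$-swap to the circuit $v_1v_2v_3v_4$. By Theorem~\ref{Thm: min counterexample} the graph $G$ has no non-trivial tight cut and no $P$-minor, so Lemma~\ref{Lemma: swap} guarantees that the resulting graph $G'$ is a projective planar $5$-graph with no $P$-minor. The swap turns at least one $2$-edge of $f$ into a $3$-edge while destroying no $3$-edge of $G$, so $G'$ has strictly more $3$-edges than $G$ and is therefore \emph{smaller} in the sense of Definition~\ref{def:smaller}; by minimality $G'$ is class~$1$, and I fix a proper $5$-edge coloring $c$. In the opposite case both $v_1v_2$ and $v_3v_4$ become $3$-edges, and since parallel edges get distinct colors, $c(E_{G'}(v_1,v_2))$ and $c(E_{G'}(v_3,v_4))$ are two $3$-element subsets of a $5$-element palette and hence share a color $k$. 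Deleting the $k$-colored copies of $v_1v_2$ and $v_3v_4$ and coloring the reinstated edges $v_2v_3$ and $v_4v_1$ with $k$ gives a proper $5$-edge coloring of $G$, exactly as in the first case of Lemma~\ref{Lemma: diamond} --- a contradiction. Note that the pigeonhole step makes this case unconditional.

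The adjacent case is where I expect the real work. Here the $C_4$-swap produces only one new $3$-edge ($v_1v_2$) and one new $2$-edge ($v_3v_4$), whose color sets have sizes $3$ and $2$ and may be disjoint, so the direct recoloring above can fail. I would first record that $\partial_G(\{v_1,v_2,v_3\})$ has size $7 - 2\vert E_G(v_1,v_3)\vert$; since it is an odd cut of a $5$-graph and $G$ has no non-trivial tight cut, this forces $\vert E_G(v_1,v_3)\vert = 0$. If $c(E_{G'}(v_1,v_2)) \cap c(E_{G'}(v_3,v_4)) \neq \emptyset$ the recoloring again yields a proper coloring of $G$; otherwise, precisely as in Lemma~\ref{Lemma: diamond}, $c$ extends to a strong $v_1v_2$-coloring $M_1,\dots,M_5$ of $G$. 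Applying Lemma~\ref{Lemma: e-coloring Mate} and Lemma~\ref{Lemma: no bad triangle}, each $M_i$ has a mate $\partial_G(X_i)$ with no bad triangle, and the two parallel pairs at $v_2$ force these mates to meet $W(f)$ in more edges than a quadrilateral can supply, giving $d(f) \geq 5$ and the final contradiction. The delicate point --- and the step I expect to consume most of the effort --- is precisely this bookkeeping: tracking which colors each mate collects as it crosses the two digons at $v_2$ and verifying that $f$ cannot close up using only its four sides.
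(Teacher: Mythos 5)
Your opposite case is exactly the paper's Case~1 and is complete. The genuine gap is in the adjacent case $\vert E_G(v_1,v_2)\vert=\vert E_G(v_2,v_3)\vert=2$: you correctly note that after the $v_1v_2v_3v_4$-swap the sets $c(E_{G'}(v_1,v_2))$ and $c(E_{G'}(v_3,v_4))$ have sizes $3$ and $2$ and ``may be disjoint,'' and you then abandon the recoloring in favor of a mate/bad-triangle argument that you only sketch (``the bookkeeping\dots I expect to consume most of the effort''). That sketch is not a proof --- you never exhibit the mates, never show the extension of $c$ to a strong $v_1v_2$-coloring in this configuration, and never carry out the count that is supposed to force $d(f)\geq 5$. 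As written, the lemma is not established in the adjacent case.

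What you missed is that the disjoint sub-case is impossible for an elementary reason, which is how the paper closes Case~2. The swap deletes only \emph{one} copy of the $2$-edge $v_2v_3$, so a copy of $v_2v_3$ survives in $G'$; it is adjacent at $v_2$ to all three edges of $E_{G'}(v_1,v_2)$ and at $v_3$ to both edges of $E_{G'}(v_3,v_4)$. If those five colors were pairwise distinct, the surviving copy of $v_2v_3$ could not be properly colored by $c$. Hence $c(E_{G'}(v_1,v_2))\cap c(E_{G'}(v_3,v_4))\neq\emptyset$, and the same ``delete the two like-colored parallel copies, give that color to the restored $v_2v_3$ and $v_4v_1$'' recoloring from your Case~1 produces a proper $5$-edge coloring of $G$, a contradiction. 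No $e$-colorings, mates, or Lemma~\ref{Lemma: no bad triangle} are needed anywhere in this lemma. (Your side computation that $\vert E_G(v_1,v_3)\vert=0$ is correct but plays no role.)
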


\begin{proof}
    Let $f=v_1v_2v_3v_4$ be a $4$-face. Suppose to the contrary that $f$ is 
    adjacent to more than one $2$-face. 
    
    Case 1: $\vert E_G(v_1,v_2)\vert = \vert E_G(v_3,v_4) \vert = 2$.
    Let $G'$ be the $v_1v_2v_3v_4$-swap of $G$.
    By Lemma~\ref{Lemma: swap}, $G'$ is a 
    projective planar $5$-graph with no $P$-minor which by the minimality of $G$ has  
    a proper 5-edge coloring $c$.
    Note that $\vert E_{G'}(v_1,v_2)\vert = \vert E_{G'}(v_3,v_4) \vert = 3$, and hence, $c(E_{G'}(v_1,v_2)) \cap c(E_{G'}(v_3,v_4)) \not = \emptyset$.
    Therefore, $G$ has a proper 5-edge coloring, a contradiction. 

    Case 2: $\vert E_G(v_1,v_2) \vert = \vert E_G(v_2,v_3) \vert = 2$.
    Let $G'$ be the $v_1v_2v_3v_4$-swap of $G$. As in case 1, we deduce that 
    $G'$ is a $5$-graph with no $P$-minor and that $c$ is a proper 5-edge coloring 
    of $G'$.
    Since $G$ is class 2, it follows that 
    $c(E_{G'}(v_1,v_2)) \cap c(E_{G'}(v_3,v_4)) = \emptyset$. Hence, we can assume that $c(v_2v_3) \not \in c(E_{G'}(v_1,v_2))$. Consequently, 
    $c(E_G(v_1,v_2)) \cap c(E_{G'}(v_3,v_4))  \not = \emptyset$
    and thus, $G$ has a proper 5-edge coloring, a contradiction.
\end{proof}

\begin{lem}\label{Lemma: 5-case 4-face 3-face antipode 4+}
    Let $f$ be $4$-face and $M$ be a perfect matching of its facial walk. If one of the edges of $M$ is in a $3$-face, then the other edge of $M$ is in a $4^+$-face $f'$
    with $val(f') \geq 4$.
\end{lem}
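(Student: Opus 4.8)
The plan is to reduce via the $C_4$-swap on the facial $4$-cycle and then run the mate machinery of Lemmas~\ref{Lemma: e-coloring Mate} and~\ref{Lemma: no bad triangle}, closely following the proof of Lemma~\ref{Lemma: diamond}. Write $f = v_1v_2v_3v_4$ with $M = \{v_1v_2,v_3v_4\}$, let $v_1v_2$ lie on a $3$-face $T = v_1v_2u$, and let $f'$ be the face sharing $v_3v_4$ with $f$. Let $G'$ be the $v_1v_2v_3v_4$-swap of $G$, so $v_2v_3,v_4v_1$ are deleted and parallel copies $\overline{v_1v_2},\overline{v_3v_4}$ are added. By Lemma~\ref{Lemma: swap}, $G'$ is a projective planar $5$-graph with no $P$-minor, hence class $1$ by minimality; fix a proper $5$-edge coloring $c$. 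If $c(E_{G'}(v_1,v_2)) \cap c(E_{G'}(v_3,v_4)) \neq \emptyset$, then placing a common color on both digons lets us color $v_2v_3$ and $v_4v_1$ and uncross the swap, giving a proper $5$-edge coloring of $G$, a contradiction. Hence these two $2$-element color sets are disjoint, and after relabeling I may assume $c(v_1v_2)=3,\ c(\overline{v_1v_2})=2,\ c(v_3v_4)=5,\ c(\overline{v_3v_4})=1$, with $4$ the remaining color.

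Next I would convert $c$ into a strong $v_1v_2$-coloring $M_1,\dots,M_5$ of $G$. Mirroring Lemma~\ref{Lemma: diamond}, set $M_1 = (c^{-1}(1)\setminus\{\overline{v_3v_4}\}) \cup \{v_1v_2, v_2v_3, v_4v_1\}$, $M_2 = (c^{-1}(2)\setminus\{\overline{v_1v_2}\})\cup\{v_1v_2\}$, and $M_i = c^{-1}(i)$ for $i \in \{3,4,5\}$. Then $M_1$ is a $T$-join for $T=V(G)$ whose only degree-$3$ vertices are $v_1,v_2$, the remaining $M_i$ are perfect matchings of $G$, one has $v_1v_2 \in M_1\cap M_2 \cap M_3$, $v_2v_3,v_4v_1 \in M_1$, and $v_3v_4 \in M_5$; a routine check shows the $M_i$ pairwise meet only in $v_1v_2$, so this is a strong $v_1v_2$-coloring. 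By Lemma~\ref{Lemma: e-coloring Mate} each $M_t$ has a mate $\partial_G(X_t)$, and by Lemma~\ref{Lemma: no bad triangle} I may take each mate to have no bad triangle; this is where the hypothesis that $v_1v_2$ lies on the triangle $T$ enters, since $T$ is the potential bad triangle at $v_1v_2$ that the no-bad-triangle choice must avoid.

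The key structural step is a parity argument on the $4$-cycle $\partial f$. Fix $t \in \{2,3,4,5\}$. Since $\partial_G(X_t)$ is a mate for $M_t$ with $t \neq 1$, we have $\vert\partial_G(X_t) \cap M_1\vert = 1$, and as $v_1v_2 \in M_1 \cap \partial_G(X_t)$ this forces $v_2v_3, v_4v_1 \notin \partial_G(X_t)$. A cut and a circuit always meet in an even number of edges, so the facial cycle of $f$ meets $\partial_G(X_t)$ evenly; with $v_1v_2$ in the cut and $v_2v_3,v_4v_1$ out, this forces $v_3v_4 \in \partial_G(X_t)$. Thus all four matching-mates pass through $v_3v_4$, and since the facial circuit of $f'$ also meets each $\partial_G(X_t)$ evenly, $f'$ carries a second cut-edge $e_t^{*} \neq v_3v_4$ for every $t \in \{2,3,4,5\}$. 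For each such $t$ the face across $e_t^{*}$ from $f'$ contains a further edge of $M_t$ not adjacent to $e_t^{*}$ (no bad triangle), so it is a $4^+$-face; hence $e_t^{*}$ is shared between $f'$ and a $4^+$-face.

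The main obstacle is then to show that the four edges $e_2^{*}, e_3^{*}, e_4^{*}, e_5^{*}$ are genuinely distinct, which yields $val(f') \geq 4$ (and in particular $d(f') \geq 4$, so $f'$ is a $4^+$-face). I would obtain this by tracking colors: since $v_1v_2$ already accounts for the unique $M_1,M_2,M_3$-edge of each mate and $v_3v_4$ for the unique $M_5$-edge, one can choose the second edge $e_t^{*}$ in color $M_t$ for $t \in \{2,3,5\}$ and in color $M_4$ for $t=4$, so that four distinct colors give four distinct edges. Making this bookkeeping airtight, in particular ruling out that several mates reuse one common $M_4$-edge of $f'$, is the delicate point; here the triangle $T=v_1v_2u$ (via the no-bad-triangle choice of mates, which pins the color-$4$ structure near $v_1v_2$ rather than on $f'$, playing the role of the diagonal in Lemma~\ref{Lemma: diamond}) is what separates the color classes and completes the proof that $val(f') \geq 4$.
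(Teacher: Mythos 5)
Your proposal follows the paper's proof almost exactly up to and including the parity step: the $v_1v_2v_3v_4$-swap, the disjointness of $c(E_{G'}(v_1,v_2))$ and $c(E_{G'}(v_3,v_4))$, the construction of the strong $v_1v_2$-coloring with the $T$-join $M_1$, the existence of mates, and the deduction that $v_2v_3,v_4v_1\notin\partial_G(X_t)$ forces $v_3v_4\in\partial_G(X_t)$ and a second cut edge $e_t^{*}$ on $W(f')$. The gap is exactly where you flag it, and your proposed repair does not work. You claim the distinctness of $e_2^{*},\dots,e_5^{*}$ follows from choosing mates via Lemma~\ref{Lemma: no bad triangle}, with $T=v_1v_2u$ as ``the potential bad triangle at $v_1v_2$.'' But the bad-triangle definition requires $e=uv$ with $v\notin\{u_1,u_2\}$, i.e.\ the triangle meets $e$ in exactly one endpoint; a triangle containing the edge $v_1v_2$ itself is not a bad-triangle candidate, so that lemma says nothing here. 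Moreover, even for admissible triangles the conclusion only bounds $\vert E(T)\cap\partial_G(X_i)\cap M_i\vert$; it does not locate the free-color ($M_4$ in your labeling) edge of each mate, which is what you need to rule out all four mates reusing a single $M_4$-edge of $W(f')$.

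What the paper actually does at this point is elementary and is the only place the $3$-face hypothesis enters: every mate contains $v_1v_2$, hence separates $v_1$ from $v_2$, hence contains exactly one of $v_1u$, $v_2u$. Since the five colors at $v_1$ (resp.\ $v_2$) in $G'$ are exhausted by $\{2,3\}$ on $E_{G'}(v_1,v_2)$ plus three further edges, $c(v_1u),c(v_2u)\in\{1,4,5\}$. For $t\in\{2,3,5\}$ the quotas $\vert\partial_G(X_t)\cap M_1\vert=1$ and $\vert\partial_G(X_t)\cap M_5\vert=1$ are already met by $v_1v_2$ and $v_3v_4$, so the triangle edge in the cut cannot have color $1$ or $5$ and must be the color-$4$ edge; in particular such an edge exists and lies in every mate (the same conclusion holds for $t=4$). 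This pins the $M_4$-quota of each cut at the triangle, away from $f'$, so the second $W(f')$-edge $e_t^{*}$ must lie in $M_t$ itself; distinctness then follows from $M_t\cap M_{t'}\subseteq\{v_1v_2\}$ and $v_1v_2\notin W(f')$. Your intuition that the triangle ``pins the color-$4$ structure near $v_1v_2$'' is the right one, but the mechanism is this separation-plus-quota count, not the no-bad-triangle lemma, and without it the claimed bound $val(f')\geq 4$ is not established.
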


\begin{proof}
    Let $f=v_1v_2v_3v_4$.
    Without loss of generality, suppose that $v_1v_2v'$ is a 3-face.
    Let $G'$ be the $v_1v_2v_3v_4$-swap of $G$.
    By Lemma~\ref{Lemma: swap}, $G'$ is a $5$-graph with no $P$-minor.
    By Lemma~\ref{Lemma: 5-case 4-face 2-face}, $G'$ is smaller than $G$ and thus, it
    has a proper 5-edge coloring $c$.
    Since $G$ is class 2,  we can assume $c(E_{G'}(v_1,v_2)) \cap c(E_{G'}(v_3,v_4)) = \emptyset$, say $c(E_{G'}(v_1,v_2)) = \{2,3\}$ and $c(E_{G'}(v_3,v_4)) = \{1,4\}$.
    Then $M_1 = (c^{-1}(1) \backslash \{\overline{v_3v_4}\}) \cup \{v_1v_2, v_1v_4, v_2v_3\}$ is a $T$-join in $G$ ($T = V(G)$) with $v_1$ and $v_2$ being the only vertices of degree 3.
    Further, $M_2 = (c^{-1}(2)\setminus \{\overline{v_1v_2}\}) \cup \{v_1v_2\}, M_3 = c^{-1}(3), M_4 = c^{-1}(4)$, and  $M_5 = c^{-1}(5)$ are perfect matchings of $G$.
    Thus, $M_1, \ldots, M_5$ is a strong $v_1v_2$-coloring of $G$.

    By Lemma~\ref{Lemma: e-coloring Mate}, there exists a mate $\partial_G(X_t)$ for every $t \in [5]$. 
    Since $v_1v_4,v_2v_3 \in M_i$, $E_G(v_3,v_4) \subseteq \partial_G(X_i)$ for 
    each $i \in \{2,3,4,5\}$. If $5 \not \in \{c(v_1v'), c(v_2v')\}$, then $\partial_G(X_j)$ does not exist for each $j \in \{2,3,5\}$, a contradiction.
    Thus, we suppose $c(v_1v') = 5$.
    Then $\partial_G(X_j)$ contains edge $v_1v'$ .
    Therefore, $\{ v_1v',v_1v_2, v_3v_4 \} \in \partial_G(X_j)$ and there is an edge of $M_j$ in $f'$.
    The face sharing an edge of $M_j$ with $f'$ contains another edge of $M_j$, which is not in incident to that edge.
    Thus, these faces are of size at least $4$.
    Since $f$ is a 4-face, it follows that $val(f') \geq 4$ and $d(f') \geq 4$.
\end{proof}

\begin{lem}\label{Lemma: 4-face shame}
    If a heavy 4-face $f=v_1v_2v_3v_4$ with $E_G(v_1,v_4)=2$ is adjacent to a 3-face $f'=v_1v_2v_5$, then $val(f'') \geq 3$ for the face $f''$ which shares the edge $v_2v_5$ with $f'$. 
\end{lem}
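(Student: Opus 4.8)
The plan is to mirror the construction in the proof of Lemma~\ref{Lemma: 5-case 4-face 3-face antipode 4+}: build the $v_1v_2v_3v_4$-swap, extend a coloring of it to a strong $v_1v_2$-coloring of $G$, extract mates, and then use the triangle $f'$ to force three edges of $f''$ onto $4^+$-faces. First I would record the local structure. Since $f$ is heavy and, by Lemma~\ref{Lemma: 5-case 4-face 2-face}, adjacent to exactly one $2$-face, the edge $v_1v_4$ is the unique $2$-edge on the boundary of $f$, while $v_1v_2$, $v_2v_3$, $v_3v_4$ are $1$-edges; in particular $v_1v_2$ is the $1$-edge shared by $f$ and the triangle $f'=v_1v_2v_5$, and $v_2v_5$ is the $1$-edge shared by $f'$ and $f''$, so $v_2v_5$ itself (bordering the $3$-face $f'$) never contributes to $val(f'')$.

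Next I would form $G'$, the $v_1v_2v_3v_4$-swap of $G$. By Lemma~\ref{Lemma: swap}, $G'$ is a projective planar $5$-graph with no $P$-minor, and since the swap turns the $1$-edges $v_1v_2$ and $v_3v_4$ into $2$-edges while $v_1v_4$ becomes a $1$-edge (and $v_2v_3$ is removed), $G'$ has strictly more $2$-edges, the same vertex count, and the same number of $3$-edges; hence $G'$ is smaller than $G$ in the sense of Definition~\ref{def:smaller}. By minimality $G'$ has a proper $5$-edge coloring $c$. Because $G$ is class $2$, matching the colors on the two new $2$-edges would give a proper coloring of $G$, so $c(E_{G'}(v_1,v_2))\cap c(E_{G'}(v_3,v_4))=\emptyset$, and I may assume $c(E_{G'}(v_1,v_2))=\{2,3\}$ and $c(E_{G'}(v_3,v_4))=\{1,4\}$. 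As in Lemma~\ref{Lemma: 5-case 4-face 3-face antipode 4+} this extends to a strong $v_1v_2$-coloring $M_1,\dots,M_5$ of $G$, where $M_1=(c^{-1}(1)\setminus\{\overline{v_3v_4}\})\cup\{v_1v_2,v_1v_4,v_2v_3\}$ is a $T$-join whose only degree-$3$ vertices are $v_1,v_2$, the sets $M_2,\dots,M_5$ are perfect matchings, and $v_1v_2\in M_1\cap M_2\cap M_3$.

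I would then invoke Lemma~\ref{Lemma: e-coloring Mate} to obtain a mate $\partial_G(X_t)$ of $M_t$ for every $t\in[5]$. For $t\neq 1$ the condition $|\partial_G(X_t)\cap M_1|=1$ together with $v_1v_2\in\partial_G(X_t)\cap M_1$ forces $\partial_G(X_t)\cap M_1=\{v_1v_2\}$, so $v_1v_4,v_2v_3\notin\partial_G(X_t)$; hence $v_4$ lies on the side of $v_1$ and $v_3$ on the side of $v_2$, and $E_G(v_3,v_4)\subseteq\partial_G(X_t)$. The heart of the argument is then a case analysis on the colors of the two remaining triangle edges $v_1v_5$ and $v_2v_5$: these determine, for each mate, on which side of $\partial_G(X_t)$ the vertex $v_5$ lies, and hence which of the edges of $f''$ incident to $v_2$ and to $v_5$ (other than $v_2v_5$) lie in the cut carrying a perfect-matching color. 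Exactly as in the deduction forcing $c(v_1v')=5$ in Lemma~\ref{Lemma: 5-case 4-face 3-face antipode 4+}, discarding the colorings incompatible with the simultaneous existence of all five mates leaves enough structure to locate three distinct such edges on $W(f'')$.

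Finally, for each edge $e'$ of $f''$ lying in $\partial_G(X_t)\cap M_j$ for a perfect matching $M_j$, the standard argument applies: the face sharing $e'$ with $f''$ must contain a second edge of $M_j$ non-adjacent to $e'$, and therefore has size at least $4$. Collecting the three edges found above yields three edges of $W(f'')$ each shared with a $4^+$-face, i.e.\ $val(f'')\geq 3$. I expect the main obstacle to be the case analysis of the preceding paragraph: unlike the antipodal lemma, which needs a single forced color to expose one $4^+$-face, here I must control the position of $v_5$ and of the $f''$-edges at both $v_2$ and $v_5$ across all five mates so as to guarantee three distinct contributing edges, and the heaviness of $f$ (which places $v_1v_4$ into the $T$-join $M_1$ and thereby pins down $v_4$ together with $v_1$) is precisely the extra leverage that should make this bookkeeping close.
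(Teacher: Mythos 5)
Your setup coincides with the paper's: the $v_1v_2v_3v_4$-swap, the normalization $c(E_{G'}(v_1,v_2)) \cap c(E_{G'}(v_3,v_4)) = \emptyset$, the extension to a strong $v_1v_2$-coloring $M_1,\dots,M_5$ with $T$-join $M_1$, and the deduction that $v_3v_4 \in \partial_G(X_t)$ for every mate with $t \neq 1$ are exactly what the paper does. But you stop precisely where the content of the lemma begins: you assert that a case analysis on the colors of $v_1v_5$ and $v_2v_5$ ``leaves enough structure to locate three distinct such edges on $W(f'')$'' and yourself flag this as the main obstacle, without carrying it out. What has to be shown is that the edge $v_2v_5$ \emph{in particular} lies in three of the mates $\partial_G(X_j)$; only then does the even-intersection argument place an edge of $M_j$ on $W(f'')$ for three distinct $j$ and give $val(f'')\geq 3$. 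Nothing in your write-up excludes the alternative that $v_1v_5$ is the triangle edge crossing several of the cuts, in which case the forced $4^+$-faces would border the face on the other side of $v_1v_5$ rather than $f''$.

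The missing ingredient --- which your normalization $c(E_{G'}(v_3,v_4))=\{1,4\}$ quietly obscures, since you never assign a color to the surviving copy of $v_1v_4$ --- is exactly that edge: it is adjacent to all four edges of $E_{G'}(v_1,v_2)\cup E_{G'}(v_3,v_4)$, so it is forced to carry the unique fifth color ($5$ in your labeling, $4$ in the paper's). Hence $c(v_1v_5)\notin\{2,3,5\}$, so $v_1v_5$ lies either in the $T$-join $M_1$ or in the color class containing $v_3v_4$. For $j\neq 1$ each mate satisfies $\partial_G(X_j)\cap M_1=\{v_1v_2\}$, and for $j$ different from the index of the class of $v_3v_4$ it satisfies $\partial_G(X_j)\cap M_{c(v_3v_4)}=\{v_3v_4\}$; in either case $v_1v_5$ is barred from at least three of the four mates $\partial_G(X_2),\dots,\partial_G(X_5)$. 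Since $v_1$ and $v_2$ are separated by every mate, exactly one of $v_1v_5,v_2v_5$ crosses each of them, so $v_2v_5\in\partial_G(X_j)$ for three values of $j$, which is the claim you needed. This is a short but genuinely necessary computation, not a routine repetition of Lemma~\ref{Lemma: 5-case 4-face 3-face antipode 4+} (there a single forced color exposes one $4^+$-face; here the position of $v_5$ must be controlled simultaneously for three mates), so as written the proof is incomplete.
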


\begin{proof}
    By Lemma~\ref{Lemma: 5-case 4-face 2-face}, $\vert E_G(v_2,v_3)\vert = 1$. Let $G'$ be the
     $v_1v_2v_3v_4$-swap of $G$.
    By Lemma~\ref{Lemma: swap}, $G'$ is a $5$-graph with no $P$-minor.
    By the minimality of $G$, $G'$ has a proper 5-edge coloring $c$.
    If $|c(E_{G'}(v_1,v_2)) \cap c(E_{G'}(v_3,v_4))| \geq 1$, then $G$ has a proper 5-edge coloring so we can assume $c(E_{G'}(v_1,v_2)) = \{2,3\}, c(E_{G'}(v_3,v_4)) = \{1,5\}, c(v_1v_4) = 4, c(\overline{v_1v_2}) = 2$, and $c(\overline{v_3v_4}) = 1$.
    Then $M_1 = (c^{-1}(1) \backslash \{\overline{v_3v_4}\}) \cup \{v_1v_2, v_1v_4, v_2v_3\}$ is a $T$-join in $G$ ($T = V(G)$) with $v_1$ and $v_2$ being the only vertices of degree 3.
    Further, $M_2 = (c^{-1}(2)\backslash \{\overline{v_1v_2}\})\cup \{v_1v_2\}, M_3 = c^{-1}(3), M_4 = c^{-1}(4)$, and  $M_5 = c^{-1}(5)$ are perfect matchings of $G$.
    Thus, $M_1, \ldots, M_5$ is a strong $v_1v_2$-coloring of $G$.

    By Lemma~\ref{Lemma: e-coloring Mate}, there exists a mate $\partial_G(X_t)$ for every $t \in [5]$. 
    Since $v_1v_4,v_2v_3 \in M_1$, each $\partial_G(X_i)$ contains $E_G(v_3,v_4)$ for $i \in \{2,3,4,5\}$.
    It follows that each $M_i$ must contain either $v_1v_5$ or $v_2v_5$.
    Thus, $v_1v_5 \not\in M_4$ since $c(v_1v_5)\neq 4$. Therefore, $\partial_G(X_j)$ contains $v_2v_5$ for $j \in \{ 2,3,4 \}$.
    Then face $f''$ which shares edge $v_2v_5$ with $f'$ contains at least one edge of $\partial_G(X_j)$.
    The face sharing an edge of $M_j$ with $f''$ contains another edge of $M_j$, which is not incident to that edge.
    Thus, these are $4^+$-faces.
    So $val(f'') \geq 3$.
\end{proof}

%%%%%%%%%%%%%%%%%%%%%%%%%%%%%%%%%%%%%%%%%%%%%%%%%%%%%%

%\subsection{Structure of 5-faces}

\begin{lem}\label{Lemma: 5-case 5-face 2-face}
    A $5$-face is adjacent to at most two $2$-faces.
\end{lem}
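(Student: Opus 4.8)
The plan is to argue by contradiction: suppose a $5$-face $f=v_1v_2v_3v_4v_5$ is adjacent to three $2$-faces. First note that, by Lemma~\ref{Lemma: (r-2)-edge}, $f$ carries no $3$-edge on its boundary, for otherwise $d(f)\ge r+1=6$; and since a $2$-face is a digon while a single edge cannot bound one, $f$ is adjacent to a $2$-face across an edge $v_iv_{i+1}$ precisely when $v_iv_{i+1}$ is a $2$-edge. Hence at least three of the five boundary edges are $2$-edges. As the maximum matching of $C_5$ has size $2$, any three of its edges contain two sharing a vertex, and inspecting the cyclic positions shows that up to the dihedral symmetry of the pentagon there are exactly two configurations: \emph{(Case 1)} three consecutive edges $v_1v_2,v_2v_3,v_3v_4$ are $2$-edges, or \emph{(Case 2)} two consecutive edges $v_1v_2,v_2v_3$ together with one further edge $v_4v_5$ are $2$-edges. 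I would treat the two cases separately.

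For Case 1 the three consecutive $2$-edges are exactly a $2$-face $3$-path $P$ with $P_s=v_1v_2v_3v_4$, whose internal $2$-face is the digon on $v_2v_3$. The pentagon $f$ shares the edge $v_2v_3$ with that digon, so $f$ is one of the two adjacent $3^+$-faces $f_1,f_2$ of Lemma~\ref{Lemma: three 2-edges in a row}. Part~2 of that lemma then forces $f$ to be adjacent to a $4^+$-face along a common edge that is \emph{not} incident to any vertex of $P=\{v_1,v_2,v_3,v_4\}$. But every edge of the pentagon, namely $v_1v_2,v_2v_3,v_3v_4,v_4v_5,v_5v_1$, has at least one endpoint in $\{v_1,v_2,v_3,v_4\}$, so no such edge exists. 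This contradiction rules out Case 1, and it is the clean part of the argument.

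For Case 2 there is no $2$-face $3$-path along $f$, so Lemma~\ref{Lemma: three 2-edges in a row} does not apply and a separate reduction is needed. I would use that the outer vertices $v_1$ and $v_5$ of the digons on $v_1v_2$ and $v_4v_5$ are joined by the pentagon edge $g=v_5v_1$, and form $G'=(G-g)/E_G(v_1,v_2)/E_G(v_4,v_5)$ by deleting $g$ and contracting the two digons. The deletion of $g$ removes one edge at each of $v_1$ and $v_5$, which is exactly what keeps the two contracted vertices $5$-regular, so $G'$ is $5$-regular; and, as in the proofs of Lemmas~\ref{Lemma: e-coloring fat triangle} and \ref{Lemma: diamond}, one checks via Theorem~\ref{Thm: min counterexample} and the fact that $G'$ is a minor of $G$ that $G'$ is a projective planar $5$-graph with no Petersen-minor. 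By minimality $G'$ is $5$-edge colorable. Since $G$ is class $2$ the coloring cannot extend across the digons, so, exactly as in Lemmas~\ref{Lemma: diamond}--\ref{Lemma: 4-face shame}, it yields a strong $g$-coloring $M_1,\dots,M_5$ of $G$ with $g\in M_1\cap M_2\cap M_3$ and $M_3,M_4,M_5$ perfect matchings.

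Finally I would exploit the mates. By Lemmas~\ref{Lemma: e-coloring Mate} and \ref{Lemma: no bad triangle} each $M_i$ has a mate $\partial_G(X_i)$ without bad triangles, and since $W(f)$ is a circuit it meets every edge cut in an even number of edges. The key observation is that the $2$-edges of $f$ cannot occur in a matching-mate: if a digon edge lay in $\partial_G(X_i)$, then its parallel partner would lie there too and would violate the condition $\lvert\partial_G(X_i)\cap M_j\rvert=1$. Thus the cut edges of $f$ are confined to the two single edges $c=v_3v_4$ and $g=v_5v_1$, whose opposite faces are forced to be $4^+$-faces. The crux, and the step I expect to be the main obstacle, is to convert this scarcity into an actual contradiction: the pentagon offers only these two non-digon edges to absorb the $4^+$-faces demanded by the mates of $M_3,M_4,M_5$ (and by the defect vertices of $M_1,M_2$), and I would argue, in the spirit of Case 1, that this budget is too small, or else recolor along a mate to lower the covering number of $g$ to $1$ and produce a proper $5$-edge coloring of $G$. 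Verifying that $G'$ is a $5$-graph and nailing down this final count are where the real work lies.
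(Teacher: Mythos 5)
Your Case~1 (three consecutive $2$-edges) is correct and is essentially how the paper disposes of that configuration: Lemma~\ref{Lemma: three 2-edges in a row}(2) demands a common edge of $f$ with a $4^+$-face avoiding every vertex of $\{v_1,\dots,v_4\}$, which a pentagon cannot supply; the paper uses exactly this to conclude $\vert E_G(v_3,v_4)\vert=\vert E_G(v_1,v_5)\vert=1$ and hence that the third digon must sit on $v_4v_5$, i.e.\ your Case~2.

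Case~2, however, is where the entire proof lives, and there your proposal has a genuine gap in two respects. First, the ``key observation'' that a digon edge cannot lie in a mate is false: a mate $\partial_G(X_i)$ only requires $\vert\partial_G(X_i)\cap M_j\vert=1$ for each $j\neq i$, and both parallel edges of a digon may lie in the cut as long as they carry different colors --- this happens systematically throughout the paper (e.g.\ in the proof of Lemma~\ref{Lemma: three 2-edges in a row}, where $e_1,e_2\in\partial_G(X_t)$ for every mate). So the scarcity argument you intend to build on that observation collapses at the start. Second, you explicitly stop before deriving any contradiction (``nailing down this final count [is] where the real work lies''), and that count \emph{is} the lemma. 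For comparison, the paper's reduction is different from yours: it contracts $\{v_1,v_2,v_3\}$ to a single vertex $v'$, deletes the images of $v_3v_4$ and $v_1v_5$, and adds a third edge between $v_4$ and $v_5$; a proper $5$-edge coloring of the resulting projective planar $5$-graph then forces, via a short pigeonhole comparing the five colors around $v'$ with the three colors on $E_{G'}(v_4,v_5)$, an explicit strong $e$-coloring of $G$ for some $e\in E_G(v_4,v_5)$ in which the perfect matching $M_2$ provably has no mate, contradicting Lemma~\ref{Lemma: e-coloring Mate}. Whether your alternative reduction (delete $v_1v_5$ and contract the two outer digons) can be pushed to the same conclusion is not demonstrated; as written, the proposal does not prove the lemma.
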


\begin{proof}
    Let $f=v_1\dots v_5$ be a $5$-face.
    Suppose to the contrary that $f$ is adjacent to more than two $2$-faces. Then two of them are incident. 
    Without loss of generality, suppose $|E_G(v_1,v_2)| = |E_G(v_2,v_3)| = 2$.
    By Lemma~\ref{Lemma: three 2-edges in a row}, $|E_G(v_3,v_4)| = |E_G(v_1,v_5)| = 1$ and consequently, $|E_G(v_4,v_5)| = 2$.

    Let $G'$ be the graph obtained from $G$ by first contracting 
    $V'=\{v_1,v_2,v_3\}$ to a new vertex $v'$, deleting the edges $v_4v',v_5v'$, and adding an edge $\overline{v_4v_5}$ between $v_4,v_5$.

    Let $X,Y$ be odd sets in $G$ where $X \subseteq V(G) \setminus V'$ and $Y \subset V(G)$ with $V' \subseteq Y$. Let $X^-$ and $Y^-$ be the 
    corresponding sets in $G/V'$, where $v' \in Y^-$. By Theorem~\ref{Thm: min counterexample},
    $7 \leq \vert \partial_G(X) \vert = \vert \partial_{G/V'}(X^-)\vert$ and 
    $7 \leq \vert \partial_G(Y) \vert = \vert \partial_{G/V'}(Y^-) \vert$. Hence,    
    $\vert \partial_{G'}(Z) \vert \geq 5$ for every odd set $Z \subset V(G)$ and hence, $G'$ is a projective planar 5-graph.

Let $\{e_1,e_1', e_2, e_3,e_3'\} = \partial_{G'}(v')$, where we consider
the corresponding edge (to $e_i, e_i'$) in $G$ to be incident to $v_i$. 
Then $c(e_2) \not \in c(E_{G'}(v_4,v_5))$, otherwise $c$ can be extended to
a proper 5-edge coloring of $G$. 
It follows that either $\vert c(\{e_1,e'_1\}) \cap c(E_{G'}(v_4,v_5))\vert = 2$ or 
$\vert c(\{e_3,e'_3\}) \cap c(E_{G'}(v_4,v_5))\vert  = 2$,
    say $\vert c(\{e_1,e'_1\}) \cap c(E_{G'}(v_4,v_5))\vert = 2$.
    To be precise, let  $c(e_2) = 1, c(e_1) = 2, c(e'_1) = 3, c(e_3) = 4, c(e'_3) = 5$, and $c(E_{G'}(v_4,v_5)) = \{2,3,4\}$ where $c(\overline{v_4v_5}) = 2$, and $e\in E_{G}(v_4,v_5)$.
Extend $c$ to a partial $5$-edge coloring of $G$ as follows: 
    Set $c(E_{G}(v_1,v_2)) = \{ 4,5\}$ and $c(E_{G}(v_2,v_3)) = \{ 2,3\}$.
    Then $M_1 = c^{-1}(1) \cup \{v_1v_5, v_3v_4, e\}$ is a $T$-join in $G$ ($T = V(G)$) with $v_4$ and $v_5$ being the only vertices of degree 3.
    Further, $M_2 = (c^{-1}(2)\backslash \{\overline{v_4v_5}\})\cup \{e\}, M_3 = c^{-1}(3), M_4 = c^{-1}(4)$, and  $M_5 = c^{-1}(5)$ are perfect matchings of $G$.
    Thus, $M_1, \ldots, M_5$ is a strong $e$-coloring of $G$. But there is no mate for $M_2$,
    a contradiction to Lemma~\ref{Lemma: e-coloring Mate}. \end{proof}

\begin{lem}\label{Lemma: 5-case 5-face direct heavy triangle}
    If a $5$-face $f$ is adjacent to two $2$-faces and a heavy 3-face,
    then $f$ is not adjacent to another 2- or 3-face. 
\end{lem}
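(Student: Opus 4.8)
The plan is to dispose of the ``$2$-face'' part at once and then spend the effort on excluding a second $3$-face. Since $f$ is already adjacent to two $2$-faces, Lemma~\ref{Lemma: 5-case 5-face 2-face} forbids a third, so no further $2$-face can be adjacent to $f$. Write $f=v_1\dots v_5$ and suppose, for a contradiction, that besides the heavy $3$-face $T$ there is a second $3$-face $T'$ adjacent to $f$.

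First I would pin down the local picture around $T$. The edge that $f$ shares with $T$ must be a $1$-edge, since a $2$-edge on that side would place a $2$-face rather than $T$ next to $f$; say it is $v_2v_3$, so $T=v_2v_3w$, and $T$ is adjacent to a digon $D_T$, say on $v_2w$. Applying Lemma~\ref{Lemma: e-coloring fat triangle} to $T$ with $f$ in the role of $f''$ gives: if $\vert N_G(v_2)\vert=4$ then $val(f)\ge 3$. But $f$ is adjacent to $T$ and to two $2$-faces, so at least three of its five edges meet faces that are not $4^+$-faces, whence $val(f)\le 2$, a contradiction. Since $v_1,v_3,w$ are distinct we have $\vert N_G(v_2)\vert\ge 3$, so in fact $\vert N_G(v_2)\vert=3$; as $\deg v_2=5$ and $v_2w$ is a $2$-edge this pins down the multiplicities at $v_2$ and places a digon $D_1$ of $f$ on $v_1v_2$, directly next to $T$. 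The second digon $D_2$ then lies on $v_3v_4$, $v_4v_5$, or $v_5v_1$, and $T'$ occupies one of the two edges of $f$ that remain.

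The argument now splits on the position of $D_2$ (up to the reflection of $f$ fixing $v_2$). If $D_2=v_5v_1$, then $v_5v_1v_2w$ is a $2$-face $3$-path with internal $2$-face $D_1$; Lemma~\ref{Lemma: three 2-edges in a row} forces the face of $f$ on $v_3v_4$, the only edge of $f$ incident to no vertex of the path, to be a $4^+$-face, and its count of $4^+$-faces around $D_1$ is meant to exclude $T'$ on the remaining edge as well. For $D_2\in\{v_3v_4,v_4v_5\}$ I would instead argue by a direct coloring: form $G'$ from $G$ by contracting and re-wiring around $T$, $D_1$, $D_2$ (deleting the superfluous parallel edges and reconnecting) in the manner of the proofs of Lemmas~\ref{Lemma: 5-case 5-face 2-face} and~\ref{Lemma: e-coloring fat triangle}, and verify via Theorem~\ref{Thm: min counterexample} that $G'$ is a $P$-minor-free projective planar $5$-graph smaller than $G$. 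By the induction hypothesis $G'$ is $5$-edge colorable, and this coloring extends to a strong $e$-coloring of $G$ for a suitable $e\in E_G(v_1,v_2)$; the placement of $T$, $T'$, $D_1$, $D_2$ around $f$ should then leave one of the perfect matchings $M_i$ with no mate, contradicting Lemma~\ref{Lemma: e-coloring Mate}.

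The main obstacle is the coloring cases. One must check that the contracted-and-rewired $G'$ is genuinely a $5$-graph, i.e.\ that the odd-cut bounds of Theorem~\ref{Thm: min counterexample} survive the operation, and, more delicately, that the extended strong $e$-coloring admits no mate for the relevant matching --- this is precisely the step that consumes the extra $3$-face $T'$. A secondary difficulty is bookkeeping: confirming that the split on the position of $D_2$ together with the pentagon's symmetry is exhaustive, and that in the $2$-face $3$-path case Lemma~\ref{Lemma: three 2-edges in a row} really rules out $T'$ on \emph{both} remaining edges of $f$, not only on $v_3v_4$.
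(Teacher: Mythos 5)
Your structural reduction around the heavy triangle $T$ is sound: Lemma~\ref{Lemma: 5-case 5-face 2-face} disposes of a third $2$-face, and the appeal to Lemma~\ref{Lemma: e-coloring fat triangle} (after noting $val(f)\le 2$ and that $\vert E_G(v_2,w)\vert=3$ is excluded by Lemma~\ref{Lemma: (r-2)-edge} applied to $T$) does force a digon $D_1$ of $f$ on $v_1v_2$. The gap is the case $D_2=v_5v_1$ with $T'$ on $v_4v_5$, which you assign to the counting branch. Applying Lemma~\ref{Lemma: three 2-edges in a row} to the $2$-face $3$-path $v_5v_1v_2w$ (internal $2$-face $D_1$), part~2 does force the face of $f$ on $v_3v_4$ --- the only edge of $f$ avoiding $\{v_5,v_1,v_2,w\}$ --- to be a $4^+$-face, so $T'$ cannot sit there; but part~1 only gives $n_1\ge 1$ and $n_1+n_2\ge 4$, and the deficit can be absorbed entirely by $n_2$, i.e.\ by the face on the \emph{other} side of $D_1$, about which nothing is known. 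It imposes no second constraint on $f$, so a $3$-face on $v_4v_5$ survives. This configuration is exactly the one the paper also cannot dispose of combinatorially: it is the ``two $3$-faces not sharing a vertex of $W(f)$'' sub-case of its Case~1 (the two $2$-faces of $f$ incident at a vertex, after relabelling), and the paper kills it with a $C_6$-swap on the five pentagon vertices together with the apex of the heavy triangle, followed by a mate count showing two of the five colors cannot have mates. Your own hedge at the end correctly locates this weakness, but flagging it does not close it.

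The second problem is that your coloring branch is only a plan. The paper's arguments there (and in its Case~2, where the two $2$-faces are not incident) are $C_6$-swaps on a specific hexagon justified by Lemma~\ref{Lemma: swap} --- not a contraction/re-wiring of $T$, $D_1$, $D_2$ --- together with a specific normalisation of the colors on the swapped edges and the observation that the edges of $f$ covered by $T$, $T'$ and the digons absorb the intersections with too many of the cuts $\partial_G(X_j)$, leaving some $M_j$ without a mate. None of this is pinned down in your proposal: you do not name the hexagon, the edge $e$, or the color distribution, and the assertion that ``one of the $M_i$ has no mate'' is precisely the computation that consumes $T'$ and needs to be done. So the proposal identifies the right toolkit and a correct (and slightly different) way to organise the cases, but leaves both the decisive sub-case and the decisive computation open.
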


\begin{proof}
    Let $f= v_1\dots v_5$ be a 5-face which is adjacent to two 2-faces and a heavy 3-face.
    Suppose to the contrary that $f$ is adjacent to another 3-face. Obviously,
    the two $3$-faces cannot be adjacent. 

    Case 1: The two 2-faces adjacent to $f$ are incident to each other, say $\vert E_G(v_1,v_2)\vert = \vert E_G(v_2,v_3)\vert = 2$.
    
    First, if the two 3-faces share a vertex in $W(f)$, say
    $v_3v_4v_1'$ and $v_4v_5v_2'$ are the 3-faces, then
    by Lemma~\ref{Lemma: three 2-edges in a row}, $\vert E_G(v_3,v_1') \vert = 1$.
    By Lemma~\ref{Lemma: e-coloring fat triangle}, $\vert E_G(v_4,v_1') \vert = \vert E_G(v_4,v_2') \vert = \vert E_G(v_5,v_2') \vert = 1$.
    This contradicts the fact that $f$ is adjacent to a heavy 3-face.

    Second, if the two 3-faces do not share a vertex in $W(f)$, say 
    $v_3v_4v'$ and $v_1v_5v_6$ are the 3-faces, then
    by Lemma~\ref{Lemma: e-coloring fat triangle}, if $\vert E_G(v_5,v_6) \vert = 2$, then $val(f) \geq 3$, a contradiction.
    So we can assume $\vert E_G(v_1,v_6) \vert = 2$.

    Let $G'$ be the $v_1v_2v_3v_4v_5v_6$-swap of $G$.
    By Lemma~\ref{Lemma: swap}, $G'$ is a $5$-graph which is projective planar and $G'$ has no $P$-minor.
    By the minimality of $G$, $G'$ has a proper 5-edge coloring $c$.
    Without loss of generality, we may assume that $1\in c(E_{G'}(v_1,v_2))\cap c(E_{G'}(v_5,v_6))$ with $c(\overline{v_1v_2})=c(\overline{v_5v_6})=1$.
    Then, $1\notin c(E_{G'}(v_3,v_4))$, as otherwise $G$ is $5$-edge colorable. 
    Let $c(\overline{v_3v_4})=2$ and $c({v_3v_4})=3$.
    Then $M_1 = (c^{-1}(1)\backslash \{ \overline{v_1v_2}, \overline{v_5v_6} \} ) \cup \{ v_1v_6, v_2v_3, v_3v_4, v_4v_5 \}$ is a $T$-join in $G$ ($T = V(G)$) with $v_3$ and $v_4$ being the only vertices of degree 3.
    Furthermore, $M_2 = (c^{-1}(2)\backslash \{\overline{v_3v_4}\})\cup \{v_3v_4\}, M_3 = c^{-1}(3), M_4 = c^{-1}(4)$, and  $M_5 = c^{-1}(5)$ are perfect matchings of $G$.
    Thus, $M_1, \ldots, M_5$ is a strong $v_3v_4$-coloring of $G$.
    By Lemma~\ref{Lemma: e-coloring Mate}, there exists a mate $\partial_G(X_j)$ for every $j \in [5]$.
    As $c(v'v_3),c(v'v_4)\neq 2$ or $3$,
    we may assume then $2\in \{c(v_1v_5), c(v_5v_6)\}$ and $3\in c(E_{G}(v_1,v_2))$, which implies that there is no mate for $4$ and $5$.
    
    Case 2: The two 2-faces adjacent to $f$ are not incident to each other, say $\vert E_G(v_1,v_2)\vert = \vert E_G(v_3,v_4)\vert = 2$.
    As there are two 3-faces incident to $f$, we can assume that $v_1v_5v_6$ is a 3-face.

    Let $G'$ be the $v_1v_2v_3v_4v_5v_6$-swap of $G$.
    By Lemma~\ref{Lemma: swap}, $G'$ is a $5$-graph which is projective planar 
    and $G'$ has no $P$-minor.
    By the minimality of $G$, $G'$ has a proper 5-edge coloring $c$.

    Without loss of generality, we may assume that $1\in c(E_{G'}(v_1,v_2))\cap c(E_{G'}(v_5,v_6)$.
    Note that $c(E_{G'}(v_1, v_2))\cap c(E_{G'}(v_3, v_4))\cap c(E_{G'}(v_5, v_6))=\emptyset$, as otherwise $G$ is $5$-edge colorable.
    In particular, we can assume that $1\notin c(E_{G'}(v_3,v_4))$. 
    Let $e\in E_{G}(v_3,v_4)$.
    Then $c$ can be extended to a strong $e$-coloring of $G$ with $v_3$ and $v_4$ being incident to three edges of color $1$.
    Let $M_1, \ldots, M_5$ be the strong $e$-coloring.
    By Lemma~\ref{Lemma: e-coloring Mate}, there exists a mate $\partial_G(X_j)$ for every $j \in [5]$. 
    Then there are two colors $a,b\in \{2,3,4,5\}$, such that $a\in c(E_{G}(v_1,v_2))\cap c(E_{G}(v_3,v_4))$ and $b\in \{c(v_1v_5), c(v_5v_6)\}\cap c(E_{G}(v_3,v_4))$, which implies that there is no mate for the other two colors, a contradiction.
\end{proof}

%%%%%%%%%%%%%%%%%%%%%%%%%%%%%%%%%%%%%%%%%%%%%%%%%%%%%%

%\subsection{Structure of 6-faces}

\begin{lem}\label{Lemma: 6-face 2-faces}
    If a 6-face $f$ is adjacent to four 2-faces, then these four 2-faces 
    form a 2-face 4-path.
\end{lem}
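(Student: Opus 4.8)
The plan is to let $f=v_1\dots v_6$ and first pin down the multiplicities on its boundary. No edge of $f$ is a $3$-edge: a $3$-edge is an $(r-2)$-edge for $r=5$, so by Lemma~\ref{Lemma: (r-2)-edge} it would force $val(f)\ge 5$, whereas the four edges of $f$ that border $2$-faces contribute nothing to $val(f)$, leaving $val(f)\le 2$. Since every $2$-edge borders a $2$-face while a $1$-edge borders none, adjacency to exactly four $2$-faces means exactly four boundary edges of $f$ are $2$-edges and the remaining two are $1$-edges. Up to the dihedral symmetry of the hexagon the two $1$-edges are adjacent, at distance two, or opposite. If they are adjacent the four $2$-edges are consecutive and already form the desired $2$-face $4$-path, so it remains to rule out the other two positions.

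Suppose the $1$-edges are at distance two, say $|E_G(v_1,v_2)|=|E_G(v_3,v_4)|=1$. Then $v_4v_5,v_5v_6,v_6v_1$ are $2$-edges, so $P=v_4v_5v_6v_1$ is a $2$-face $3$-path whose internal $2$-face lies on $v_5v_6$, and $f$ is one of the two $3^+$-faces adjacent to that internal $2$-face. Lemma~\ref{Lemma: three 2-edges in a row}(2) then requires $f$ to meet a $4^+$-face along a boundary edge incident to no vertex of $P=\{v_1,v_4,v_5,v_6\}$. The only such boundary edge of $f$ is $v_2v_3$, but $v_2v_3$ is a $2$-edge and hence borders a $2$-face, not a $4^+$-face. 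This contradiction rules out the distance-two position.

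It remains to exclude opposite $1$-edges, say $|E_G(v_1,v_2)|=|E_G(v_4,v_5)|=1$, with antipodal $2$-edge pairs $\{v_2v_3,v_3v_4\}$ and $\{v_5v_6,v_6v_1\}$. Now no $2$-face $3$-path is present, so I would proceed by colouring, in the style of Lemmas~\ref{Lemma: diamond} and \ref{Lemma: 5-case 5-face 2-face}. Contract the $2$-edge path $\{v_2,v_3,v_4\}$ to a vertex $w$, delete the two edges joining $w$ to $v_1$ and to $v_5$ (the images of the two $1$-edges), and add an edge $\overline{v_1v_5}$; the odd-cut estimates of Theorem~\ref{Thm: min counterexample} give $|\partial_{G'}(Z)|\ge 5$ for every odd $Z$, so $G'$ is a projective planar $5$-graph, it has no $P$-minor since it arises by minor operations, and it is smaller, hence class $1$. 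Lifting a $5$-edge-colouring of $G'$ through the contraction produces, for a suitable $2$-edge $e$ incident with $v_6$, a strong $e$-colouring of $G$; the colours forced on the reinstated edges $v_2v_3,v_3v_4,v_1v_2,v_4v_5$ should leave one colour class with no mate, contradicting Lemma~\ref{Lemma: e-coloring Mate}.

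The distance-two case is the easy one, being immediate from Lemma~\ref{Lemma: three 2-edges in a row}. The opposite case is the main obstacle: it carries no $2$-face $3$-path, so no ready-made structural lemma applies and one must build the auxiliary graph by hand. Either the contraction above or, alternatively, the $C_6$-swap of $f$ (legitimate by Lemma~\ref{Lemma: swap}, and strictly smaller because it creates new $3$-edges on $v_3v_4$ and $v_5v_6$) can serve; the delicate point is to transport the colouring so that some colour class is \emph{provably} left without a mate rather than merely possibly so, which is where the careful bookkeeping of the lifted colouring is needed.
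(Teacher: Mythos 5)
Your reduction to the three possible positions of the two $1$-edges on $W(f)$ is correct, and your elimination of the distance-two position via part~2 of Lemma~\ref{Lemma: three 2-edges in a row} is precisely the content hidden in the paper's opening phrase ``by Lemma~\ref{Lemma: three 2-edges in a row}, we can assume \dots''; that part of your write-up is fine and, if anything, more explicit than the paper.

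The gap is the opposite-$1$-edges case, which is the whole substance of the lemma, and there your argument stops exactly where the work begins. You write that the lifted colouring ``should'' leave one colour class without a mate, and you yourself flag that making this provable rather than plausible is ``the delicate point''. That step is not bookkeeping one can wave at. The paper, taking the $C_6$-swap route you mention only as an alternative, has to (in its own labelling of the hexagon, where the $1$-edges are $v_3v_4$ and $v_6v_1$): (i) normalise the colouring $c$ of the swapped graph so that some colour lies in $c(E_{G'}(v_1,v_2))\cap c(E_{G'}(v_3,v_4))$ but not in $c(E_{G'}(v_5,v_6))$, using the fact that otherwise the swap could be undone and $G$ would be class~1; (ii) lift $c$ to an explicit strong $e_1$-colouring with $e_1\in E_G(v_5,v_6)$ in which $M_1$ is a $T$-join containing $v_1v_6$, $v_2v_3$ and $v_4v_5$, which forces every mate to leave those three edges uncut and hence to contain prescribed boundary edges of $f$; and (iii) locate colours $a,b$ that $E_G(v_5,v_6)$ shares with the other parallel classes on $W(f)$, so that the mates of the two remaining classes cannot exist, contradicting Lemma~\ref{Lemma: e-coloring Mate}. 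None of (i)--(iii) appears in your proposal, and your alternative contraction of $\{v_2,v_3,v_4\}$ is likewise only asserted to produce a contradiction. As written, the proposal establishes the two easy configurations and leaves the decisive one unproved.
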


\begin{proof}
    Let $f=v_1 \dots v_6$ be a 6-face.
    Suppose to the contrary, by Lemma~\ref{Lemma: three 2-edges in a row}, we can assume $|E_G(v_1,v_2)| = |E_G(v_2,v_3)| = |E_G(v_4,v_5)| = |E_G(v_5,v_6)| = 2$.

    Let $G'$ be the $v_1v_2v_3v_4v_5v_6$-swap of $G$.
    By Lemma~\ref{Lemma: swap}, $G'$ is a 5-graph with no $P$-minor.
    By the minimality of $G$, $G'$ has a proper 5-edge coloring $c$. 
    Without loss of generality, we may assume that $1\in c(E_{G'}(v_1,v_2))\cap c(E_{G'}(v_3,v_4))$, $c(\overline{v_1v_2})=c(\overline{v_3v_4})=1$, and $c(\overline{v_5v_6})=2$.
    In particular, $1\notin c(E_{G'}(v_5,v_6))$, as otherwise $G$ is $5$-edge colorable.
    Let $E_{G}(v_5,v_6)=\{e_1,e_2\}$.
    Then $M_1 = (c^{-1}(1)\backslash \{ \overline{v_1v_2}, \overline{v_3v_4} \} ) \cup \{ v_1v_6, v_2v_3, v_4v_5, e_1 \}$ is a $T$-join in $G$ ($T = V(G)$) with $v_5$ and $v_6$ being the only vertices of degree 3.
    Further, $M_2 = (c^{-1}(2)\backslash \{\overline{v_5v_6}\})\cup \{e_1\}, M_3 = c^{-1}(3), M_4 = c^{-1}(4)$, and  $M_5 = c^{-1}(5)$ are perfect matchings of $G$.
    Thus, $M_1, \ldots, M_5$ is a strong $e_1$-coloring of $G$.

    By Lemma~\ref{Lemma: e-coloring Mate}, there exists a mate $\partial_G(X_j)$ for every $j \in [5]$.
    There exist colors $a,b\in \{2,3,4,5\}$, such that $a\in c(E_{G}(v_5,v_6))\cap c(E_{G}(v_1,v_2))$ and $b\in c(E_{G}(v_5,v_6))\cap c(E_{G}(v_3,v_4))$, which implies that there is no mate for the other two colors, a contradiction.   
\end{proof}

\begin{lem}\label{Lemma: 6-face 2-faces incident}
    If a $6$-face is adjacent to three $2$-faces, then two of the $2$-faces are incident.
\end{lem}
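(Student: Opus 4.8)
The plan is to argue by contradiction. Assume $f = v_1 \dots v_6$ is adjacent to three pairwise non-incident $2$-faces. A $2$-face adjacent to $f$ meets $f$ in exactly one edge, so three of the hexagon edges are $2$-edges; if no two of these $2$-faces share a vertex, the three $2$-edges are pairwise non-consecutive on the $6$-cycle, and on a $6$-cycle this forces the alternating pattern. After relabeling I may assume $\vert E_G(v_1,v_2)\vert = \vert E_G(v_3,v_4)\vert = \vert E_G(v_5,v_6)\vert = 2$ while $v_2v_3, v_4v_5, v_6v_1$ are $1$-edges. Exactly as in Lemma~\ref{Lemma: 6-face 2-faces}, I would pass to the $v_1\dots v_6$-swap $G'$, which by Lemma~\ref{Lemma: swap} is a projective planar $5$-graph with no $P$-minor and which has three more $3$-edges than $G$ (the former $2$-edges), hence is smaller; by minimality $G'$ has a proper $5$-edge coloring $c$.

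The key structural step is to extract a constraint on $c$ from the fact that $G$ is class $2$. Writing $A = c(E_{G'}(v_1,v_2))$, $B = c(E_{G'}(v_3,v_4))$, $C = c(E_{G'}(v_5,v_6))$ (three $3$-element colour sets), I would show that $c$ lifts to a proper $5$-edge coloring of $G$ precisely when $A\cap B\cap C \neq \emptyset$: a common colour $t$ lets one colour each $1$-edge $t$ and leave the two remaining colours of each triple on the adjacent $2$-edge, while conversely any proper coloring of $G$ forces each $1$-edge to carry the unique colour of its two incident triples missing from the neighbouring $2$-edge, and chasing these equalities around the hexagon forces all three $1$-edges to share a single colour in $A\cap B\cap C$. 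Since $G$ is class $2$, $A\cap B\cap C = \emptyset$. As $\vert A\vert+\vert B\vert+\vert C\vert = 9$ with every colour in at most two and at least one of the triples, the distribution of colours is forced to be $2+2+2+2+1$, so exactly one colour is private to a single triple; up to relabeling I may take $A = \{1,2,5\}$, $B=\{1,3,4\}$, $C=\{2,3,4\}$, with $5$ private to $A$.

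Next I would build an explicit strong $e$-coloring. Permuting colours within each parallel class keeps $c$ proper, so I may assume $c(\overline{v_1v_2}) = c(\overline{v_3v_4}) = 1$, $c(\overline{v_5v_6}) = 2$, $c(e) = 3$ for some $e \in E_G(v_5,v_6)$, and $c(e') = 4$ for its parallel partner $e'$. Set $M_1 = (c^{-1}(1)\setminus\{\overline{v_1v_2},\overline{v_3v_4}\}) \cup \{v_6v_1, v_2v_3, v_4v_5, e\}$ and $M_2 = (c^{-1}(2)\setminus\{\overline{v_5v_6}\})\cup\{e\}$, $M_3 = c^{-1}(3)$, $M_4 = c^{-1}(4)$, $M_5 = c^{-1}(5)$. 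A degree count shows $M_1$ is a $T$-join with $T = V(G)$ whose only degree-$3$ vertices are $v_5,v_6$, that $M_2,\dots,M_5$ are perfect matchings of $G$, and that $e\in M_1\cap M_2\cap M_3$; hence $M_1,\dots,M_5$ is a strong $e$-coloring.

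Finally I would invoke Lemma~\ref{Lemma: e-coloring Mate} and show that $M_5$, the private colour, has no mate. Suppose $\partial_G(X)$ is a mate for $M_5$ with $v_5\in X$. Since $e,e'$ are parallel, both lie in $\partial_G(X)$; as $e'\in M_4$ the mate condition forces the other $M_4$-edge $q'\in E_G(v_3,v_4)$ out of the cut, so (using $v_4\in X$, which follows from $v_4v_5\notin\partial_G(X)$) we get $v_3\in X$. The unique-$M_2$-edge condition keeps the $M_2$-edge $p\in E_G(v_1,v_2)$ out of the cut, and with $v_1\notin X$ (from $v_6v_1\notin\partial_G(X)$) this yields $v_2\notin X$. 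But then the $1$-edge $v_2v_3\in M_1$ crosses the cut, so $\partial_G(X)$ meets $M_1$ in both $e$ and $v_2v_3$, contradicting $\vert\partial_G(X)\cap M_1\vert = 1$; the case $v_6\in X$ is symmetric. Hence $M_5$ has no mate, contradicting Lemma~\ref{Lemma: e-coloring Mate}. I expect the main obstacle to be the bookkeeping behind the lift criterion and the verification that $M_1,\dots,M_5$ is a genuine strong $e$-coloring; once the pattern $A,B,C$ is pinned down, the no-mate step is a short cut-tracing argument.
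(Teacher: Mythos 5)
Your proof is correct and follows essentially the same route as the paper, which simply says ``deduce a contradiction as in the proof of Lemma~\ref{Lemma: 6-face 2-faces}'': pass to the $C_6$-swap, obtain a proper $5$-edge coloring of the smaller graph, observe that the three colour triples on the new $3$-edges cannot share a colour, build a strong $e$-coloring, and derive a contradiction from Lemma~\ref{Lemma: e-coloring Mate} via a missing mate. You merely carry out explicitly (the forced $2{+}2{+}2{+}2{+}1$ colour distribution and the cut-tracing for the private colour) what the paper leaves implicit by reference.
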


\begin{proof}
    Let $f = v_1 \dots v_6$ be a 6-face and assume to the contrary that $|E_G(v_1,v_2)| = |E_G(v_3,v_4)| = |E_G(v_5,v_6)| = 2$. Let 
    $G'$ be the $v_1v_2v_3v_4v_5v_6$-swap of $G$. 
    We deduce a contradiction to our assumption as in the proof of Lemma~\ref{Lemma: 6-face 2-faces}.
\end{proof}

%%%%%%%%%%%%%%%%%%%%%%%%%%%%%%%%%%%%%%%%%%%%%%%%%%%%%%

%\subsection{Discharging Phase}

%%%%%%%%%%%%%%%%%%%%%%%%%%%%%%%%%%%%%%%%%%%%%%%%%%%%%%

\subsection{Discharging procedure}

Assign to each face $f$ of $\Psi(G)$
the initial charge of $d(f) - \frac{10}{3}$, which is denoted by $\omega(f)$. 
As $G$ is a $5$-graph, Euler's formula implies that $|F(G)| = \frac{3}{5}|E(G)| + 1$, where $F(G)$ denotes the set of faces of $\Psi(G)$.
Therefore, the total amount of charge is 

\begin{equation} \label{Eq: total initial charge 5}
\sum_{f\in F(G)}\omega(f) = \frac{10}{3}(|F(G)| -1) - \frac{10}{3}|F(G)| = -\frac{10}{3}.
\end{equation}

To finish the proof, we show that charge can be redistributed such that 
every face has non-negative charge, in contradiction to equation~\ref{Eq: total initial charge 5}.

Next, we define the rules for the redistribution of charge and then we show
that every face has non-negative charge.

\subsubsection{Rules}

\begin{description}\setlength{\itemsep}{0mm}\setlength{\parsep}{0mm}
    \item[R2a] Each $2$-face adjacent to only $4^+$-faces receives charge $\frac{2}{3}$ from each adjacent face;
    \item[R2b] Each $2$-face adjacent to a $3$-face receives charge $\frac{1}{3}$ from the $3$-face and charge $1$ from the adjacent $4^+$-face;
    \item[R2c] Each $2$-face adjacent to a $2$-face receives charge $\frac{4}{3}$ from the adjacent $4^+$-face;
    \item[R3a] Each $3$-face adjacent to only $4^+$-faces which are not heavy $4$-faces receives charge $\frac{1}{9}$ from each adjacent face;
    \item[R3b] Each $3$-face adjacent to a $3$-face receives charge $\frac{1}{6}$ from each adjacent $4^+$-face;
    \item[R3c] Each $3$-face adjacent to a $2$-face receives charge $\frac{1}{3}$ from each adjacent $4^+$-face;
    \item[R3d] Each $3$-face which is adjacent to a heavy $4$-face receives charge $\frac{1}{6}$ from each adjacent $4^+$-face which is not heavy.
\end{description}

\begin{figure}[ht]
    \centering
    \begin{minipage}{0.32\textwidth}
    \centering
        \includegraphics[scale=0.6]{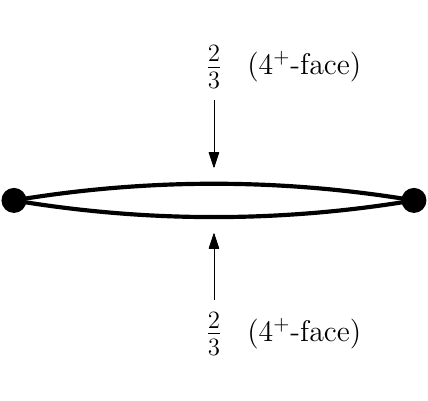}\caption*{R2a.}
    \end{minipage}
    \begin{minipage}{0.32\textwidth}
    \centering
        \includegraphics[scale=0.6]{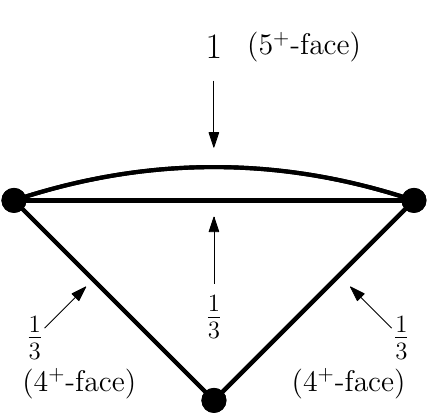}\caption*{R2b / R3c.}
    \end{minipage}
    \begin{minipage}{0.32\textwidth}
    \centering
        \includegraphics[scale=0.6]{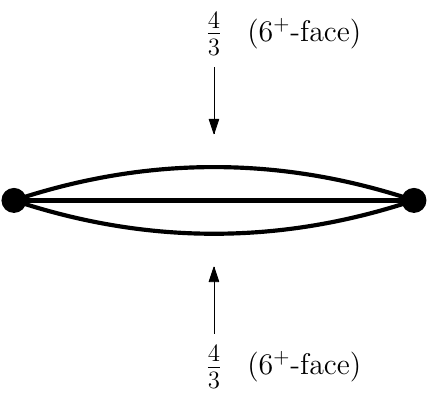}\caption*{R2c.}
    \end{minipage}\vspace{1em}

    \begin{minipage}{0.32\textwidth}
    \centering
        \includegraphics[scale=0.6]{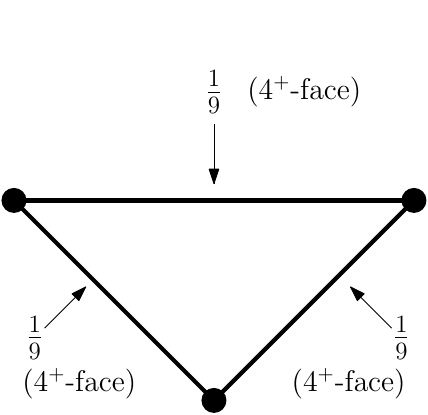}\caption*{R3a.}
    \end{minipage}
    \begin{minipage}{0.32\textwidth}
    \centering
        \includegraphics[scale=0.6]{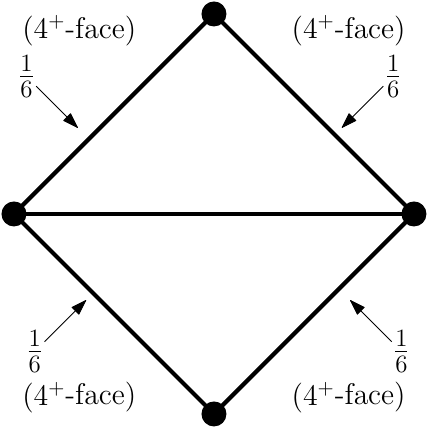}\caption*{R3b.}
    \end{minipage}
    \begin{minipage}{0.32\textwidth}
    \centering
        \includegraphics[scale=0.6]{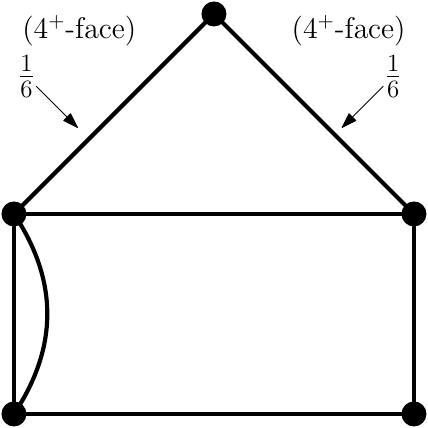}\caption*{R3d.}
    \end{minipage}
    \caption{Illustration of discharging rules.}
    \label{fig: Discharging Rules}
\end{figure}

Let $f$ be a $3^+$-face and $S$ be the set of the $2$- and $3$-faces which are adjacent to $f$. 
The amount of charge which is transferred  from $f$ to $f' \in S$ is denoted by $\tau(f,f')$. The total charge $\sum_{f' \in S}\tau(f,f')$ which is transferred from
$f$ to the faces of $S$ is denoted by $\tau(f,S)$. Note that, by
Observation~\ref{Obs: 2- and 3-faces}, each $f' \in S$ receives charge from $f$ at most once.

The final charge of a face $f$ after redistribution of charge  
is denoted by $\omega'(f)$.

%%%%%%%%%%%%%%%%%%%%%%%%%%%%%%%%%%%%%%%%%%%%%%%%%%%%%%

\subsubsection{Non-negativity of faces}

In this section we prove stepwise that $\omega'(f) \geq 0$, for every face $f$ of $\Psi(G)$.
In stating the application of rules for a specific case, we refer to a worst-case scenario.

\subsubsection*{2-faces}

Let $f$ be a 2-face.
If $f$ is in a heavy 2-face, then by Lemma~\ref{Lemma: (r-2)-edge} and rule R2c, $\omega'(f) \geq 0$. Let $f_1, f_2$ be the two faces which are adjacent to $f$. 
If $f_1$ is a 3-face, then by Lemma~\ref{Lemma: three 2-edges in a row}, $f$ is not incident to two 2-faces.
Thus, $d(f_2) \geq 6$ by Lemma~\ref{Lemma: e-coloring fat triangle}  and 
therefore, $\omega'(f) \geq 0$ by rule R2b.
If $f_1, f_2$ are $4^+$-faces, then $\omega'(f) \geq 0$ by rule R2a.

\subsubsection*{3-faces}

Let $f$ be a 3-face which is adjacent to faces $f_1,f_2,f_3$. 
By Corollary~\ref{Cor: Diamond}, two of them, say $f_2, f_3$, are $4^+$-faces. 
If $f_1$ is a $4^+$-face, then by Lemma~\ref{Lemma: 4-face shame} at most one face adjacent to $f$ is heavy.
If none is heavy, then R3a applies.
If one is heavy, then rule R3d applies.
If $f_1$ is a 3-face, then by Corollary~\ref{Cor: Diamond} rule R3b applies.
If $f_1$ is a 2-face, then rule R2b and R2c apply. 
Thus $\omega'(f) \geq 0$ in any case.

%%%%%%%%%%%%%%%%%%%%%%%%%%%%%%%%%%%%%%%%%%%%%%%%%%%%%%

\subsubsection*{4-faces}

Let $f$ be a 4-face.
By Lemma~\ref{Lemma: (r-2)-edge} and Lemma~\ref{Lemma: e-coloring fat triangle},  $f$ is not adjacent to a heavy $2$-face and not \adjtwo\ to a heavy $3$-face.
If $f$ is adjacent to a 2-face, then by Lemma~\ref{Lemma: 5-case 4-face 2-face}, there is at most one 2-face adjacent to $f$ and hence, $f$ is a heavy $4$-face.
Thus, rule R2a or R3d applies, and hence $\omega'(f) \geq 0$ in any case.
Now suppose that $f$ is not adjacent to a 2-face. 
If $f$ is adjacent to a 3-face, then by Lemma~\ref{Lemma: 5-case 4-face 3-face antipode 4+}, $f$ is adjacent to at most two 3-faces.
Therefore, $\omega'(f) \geq 0$ by rules R3a, R3b, R3c, and R3d.
If $f$ is adjacent to only $4^+$-faces, then no discharging rule applies 
and hence, $\omega'(f) \geq 0$.

\subsubsection*{5-faces}

Let $f$ be a 5-face.
By Lemma~\ref{Lemma: (r-2)-edge} and Lemma~\ref{Lemma: 5-case 5-face 2-face}, $f$ is not adjacent to a heavy $2$-face and $f$ is adjacent to at most two 2-faces.
And then by Lemma~\ref{Lemma: e-coloring fat triangle}, $f$ is not \adjtwo\ to a heavy $3$-face.
If $f$ is adjacent to two 2-faces and to a heavy $3$-face, then by Lemma~\ref{Lemma: 5-case 5-face direct heavy triangle}, $f$ is adjacent to no other $3$-face. 
Therefore, $\omega'(f) \geq 0$ by rules R2a and R3c.
Assume now $f$ is not adjacent to any heavy $3$-face.
If $f$ is adjacent to two 2-faces and three 3-faces, then by Lemma~\ref{Lemma: diamond} and Lemma~\ref{Lemma: 4-face shame}, rules R3b and R3d do not apply to the $3$-faces adjacent to $f$.
Therefore, $\omega'(f) \geq 0$ by rules R2a and R3a.
If $f$ is adjacent to two 2-faces and at most two 3-faces, then $\omega'(f) \geq 0$ by rules R2a and R3d.
If $f$ is adjacent to exactly one 2-face, then by Lemma~\ref{Lemma: e-coloring fat triangle} $f$ is adjacent to at most two heavy $3$-faces.
Therefore, $\omega'(f) \geq 0$ by rules R2a, R3b, and R3c.
If $f$ is not adjacent to a 2-face, then by rule R3c, $\tau(f,f') \leq \frac{1}{3}$ for all faces $f'$ adjacent to $f$ and consequently, $\omega'(f) \geq 0$.

%%%%%%%%%%%%%%%%%%%%%%%%%%%%%%%%%%%%%%%%%%%%%%%%%%%%%%

\subsubsection*{6-faces}

Let $f$ be a 6-face.
If $f$ is adjacent to a heavy $2$-face, then by Lemma~\ref{Lemma: (r-2)-edge}, $val(f) \geq 5$
and therefore $\omega'(f) \geq 0$ by rule R2c.
If $f$ is \adjtwo\ to a heavy triangle, then by Lemma~\ref{Lemma: e-coloring fat triangle}, $val(f) \geq 3$. Since $r=5$, it follows with Lemma 
\ref{Lemma: diamond} that no two heavy triangles
to which $f$ is indirectly adjacent are incident. Hence, by
Lemma~\ref{Lemma: 6-face 2-faces incident}, $f$ is \adjtwo\ to at most two heavy triangles.
This implies that for the three faces $f_1,f_2,f_3$ which receive charge from $f$, $\tau(f,f_1) + \tau(f,f_2) + \tau(f,f_3) \leq \frac{8}{3}$ and
consequently, $\omega'(f) \geq 0$. 
Thus, it remains to consider the cases when $f$ is not indirectly adjacent to a heavy 2- or 3-face. \\
If $f$ is adjacent to 2-faces, then by Lemma~\ref{Lemma: three 2-edges in a row}, $f$ is adjacent to at most four 2-faces.
If $f$ is adjacent to four 2-faces, then by Lemma~\ref{Lemma: 6-face 2-faces} they form a 2-edge 4-path.
By Lemma~\ref{Lemma: three 2-edges in a row}, $val(f) \geq 2$ and hence, by rule R2a, $\omega'(f) \geq 0$.
If $f$ is adjacent to exactly three 2-faces, then by Lemma~\ref{Lemma: 6-face 2-faces incident} at least two of them must be incident.
If the three 2-faces form a 2-face 3-path, then by Lemma~\ref{Lemma: three 2-edges in a row}, $val(f) \geq 1$.
Then $\tau(f,f') \leq \frac{1}{3}$ for the other faces $f'$ which receives charge from $f$.
Thus, by rules R2a and either R3a, R3b, or R3d, $\omega'(f) \geq 0$.
If the three 2-faces do not form a 2-face 3-path, then let the three faces adjacent to $f$ which are not 2-faces be $f_1',f_2',f_3'$.
By Lemma~\ref{Lemma: three 2-edges in a row} and Lemma~\ref{Lemma: e-coloring fat triangle} at most two of them are heavy 3-faces, and if two
of them are heavy 3-faces, then the third one is a $4^+$-face. Thus, $\tau(f,f_1') + \tau(f,f_2') + \tau(f,f_3') \leq \frac{2}{3}$ in any case and consequently, $\omega'(f) \geq 0$.
If $f$ is adjacent to at most two 2-faces, then by rules R2a and either R3a or R3c $\omega'(f) \geq 0$.

%%%%%%%%%%%%%%%%%%%%%%%%%%%%%%%%%%%%%%%%%%%%%%%%%%%%%%

\subsubsection*{$7^+$-faces}

Let $f$ be a $7^+$-face. For an edge $e \in W(f)$ let $f_e$ be the
other face which contains $e$. 
A subpath $P$ of $W(f)$ is \emph{discharging} if $f_e$ is a 2- or 3-face for each 
$e \in E(P)$ or $\vert V(P) \vert = 1$. 
Let $P_f$ be the set of inclusion-wise maximal discharging paths of $W(f)$,
$S_P = \{f_e \colon e \in E(P)\}$, and $S_f = \bigcup_{P \in P_f} S_P$.
Then, $\vert P_f \vert = \vert S_f \vert = val(f)$ and
$d(f) - val(f) = \sum_{P \in P_f} \vert E(P) \vert$.\\
Let $e_1$ and $e_2$ be two consecutive edges of $W(f)$.
If $f_{e_1}$ is a heavy 2-face or a heavy 3-face to which $f$ is indirectly adjacent, then by Theorem~\ref{Thm: min counterexample} and the discharging rules,  $\tau(f,f_{e_1}) + \tau(f,f_{e_2}) \leq \frac{5}{3}$. Thus, 
$\tau(f,f_{e_1}) + \tau(f,f_{e_2}) \leq \frac{5}{3}$ in any case. \\
Consequently for $P \in P_f$, $\tau(f,S_P) \leq \frac{5}{6} \vert E(P) \vert $ if $P$ 
has even length and 
$\tau(f,S_P) \leq \frac{5}{6} (\vert E(P) \vert -1) + \max\{\tau(f,f_e),
\tau(f,f_{e'})\}$,
if $P$ is of odd length with end edges $e$ and $e'$. 
Let $t = val(f) - c_t$ ($c_t \geq 0$) be the number of paths of odd length in $P_f$.\\
If $S_f$ contains a heavy 2-face, then
$\tau(f,S_P) \leq \frac{5}{6} (\vert E(P) \vert -1) + \frac{4}{3} =
\frac{5}{6} \vert E(P) \vert + \frac{1}{2}$ for each odd path $P \in P_f$.
Hence, $\omega'(f) = d(f) - \frac{10}{3} - \tau(f,S_f)
    \geq d(f) - \frac{10}{3} - (\frac{5}{6}(d(f) - val(f)) + \frac{t}{2})
    = \frac{1}{6}(d(f) + 2 val(f) + 3c_t - 20) \geq 0$, since 
    $d(f) + 2 val(f) + 3c_t \geq 20$ by 
Lemma~\ref{Lemma: (r-2)-edge}. Note that $d(f) \geq 10$ if $c_t=0$.\\
If $S_f$ contains no heavy 2-face, then 
$\tau(f,S_P) \leq \frac{5}{6} \vert E(P) \vert + \frac{1}{6}$ for each odd path $P \in P_f$.
As above, we deduce
$\omega'(f) \leq \frac{1}{6}(d(f) + 4 val(f) + c_t - 20)$.
If $val(f) \geq 4$ or $val(f) = 3$ and $c_t \geq 1$, then $\omega'(f) \geq 0$. If $val(f) = 3$ and $c_t=0$, then $d(f)$ is even ($\geq 8$) and hence, $\omega'(f) \geq 0$.

It remains to consider the case $val(f) \leq 2$. 
By Lemma~\ref{Lemma: three 2-edges in a row} and Lemma~\ref{Lemma: e-coloring fat triangle}, $f$ is not \adjtwo\ to any heavy $3$-face. Again by Lemma~\ref{Lemma: three 2-edges in a row}, $f$ is adjacent to at most $(d(f)-2)$ $2$-faces. 
If $f$ is adjacent to exactly $(d(f)-2)$ $2$-faces, then by Lemma~\ref{Lemma: three 2-edges in a row} $f$ is not adjacent to a $3$-face, and hence, $\omega'(f) \geq 
\frac{1}{3}(d(f) - 6) \geq 0$. 
If $f$ is adjacent to at most $(d(f)-3)$ $2$-faces, then, $\omega'(f) \geq 
\frac{1}{3}(d(f) - 7) \geq 0$ and therefore, $\omega'(f) \geq 0$ for all $7^+$-faces.

In conclusion,  every face of $\Psi(G)$ has non-negative charge. Consequently,
$0 \leq \sum_{f \in F(G)} \omega'(f) = \sum_{f \in F(G)} \omega(f) = - \frac{10}{3}$,
a contradiction, and Theorem~\ref{Thm: main 5-graph} is proved.

\section{Concluding remarks} \label{Sec: concluding remarks}

Let   $ H_1, \ldots, H_t $ be a sequence of graphs such that $V( H_i)\subseteq V(H_1) $ for each $ i\in\{2,\ldots,t\} $.  Denote by $H_1+E(H_2)+\cdots+E(H_t)$ the graph obtained from $H_1$ by adding a copy of every edge of $H_i$ for every $i\in\{2,\dots,t\}$. 
Let $\ca M$ be a finite multiset of perfect matchings of the Petersen graph $P$. 
The graph $P+\sum_{M\in\ca M}M$ is denoted by $P^{\ca M}$.

\begin{theo} [\cite{ma2025sets}] \label{theo:P+matching}
	Let $P$ be the Petersen graph and $H$ be a 1-regular graph on 
	$V(P)$ with edge set $M$. Then $P+M$ is class $2$ if and only if $M\subseteq E(P)$.
\end{theo}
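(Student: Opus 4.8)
The plan is to reformulate $4$-edge-colorability of the $4$-regular multigraph $P+M$ as a partition of its edge multiset into four perfect matchings, and to exploit the rigid perfect-matching structure of the Petersen graph. I will use three facts: $P$ has exactly six perfect matchings $P_1,\dots,P_6$; each edge of $P$ lies in exactly two of them; and any two distinct $P_j$ meet in exactly one edge. The third fact follows slickly from the first two by counting: $6\cdot 5 = 2\cdot 15$ forces each edge into a distinct pair, and $\binom{6}{2}=15=|E(P)|$ then gives a bijection between edges of $P$ and pairs $\{j,j'\}$, with $P_j\cap P_{j'}$ equal to the corresponding edge.

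For the ``if'' direction, assume $M\subseteq E(P)$, so that $M$ is itself one of the six matchings, say $M=P_1$. Every color class of a proper $4$-edge-coloring of $P+M$ is a perfect matching of the multigraph, hence, projected to the simple graph, one of $P_1,\dots,P_6$; moreover the only matching that may be used twice is $M$ itself, since two color classes projecting to $P_j$ would force every edge of $P_j$ to have multiplicity at least $2$, i.e. $P_j=M$. Writing $x_j$ for the number of color classes equal to $P_j$, a coloring must satisfy $\sum_j x_j = 4$ and, for each edge $e$, $\sum_{j\colon e\in P_j} x_j$ equals the multiplicity of $e$ in $P+M$ (which is $2$ if $e\in M$ and $1$ otherwise). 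By the bijection above the five edges of $M=P_1$ are exactly the edges $P_1\cap P_j$ for $j\in\{2,\dots,6\}$, which gives $x_1+x_j=2$, hence $x_j=2-x_1$; summing yields $10-4x_1=4$, i.e. $x_1=\tfrac{3}{2}$. This is not an integer, so no partition into four perfect matchings exists and $P+M$ is class $2$.

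For the ``only if'' direction I must produce a proper $4$-edge-coloring when some edge $uv\in M$ lies outside $E(P)$. I would pair up colors and aim to decompose $E(P+M)$ into two \emph{even} $2$-factors (each a disjoint union of even cycles), which is equivalent to $4$-edge-colorability. The naive attempt $F_1=M\cup Q$ and $F_2=E(P)\setminus Q$ for a perfect matching $Q$ of $P$ fails: $F_1$ is automatically even, being a union of two perfect matchings, but $F_2=P\setminus Q$ is \emph{always} two disjoint $5$-cycles (as $P$ is non-Hamiltonian of girth $5$), hence odd. The decisive point is therefore that $M$ must be split between the two factors. I would choose $Q$ so that $u$ and $v$ lie on the two different $5$-cycles of $P\setminus Q$, move $uv$ into $F_2$, and carry out the resulting alternating swap, rerouting $P$-edges between $F_1$ and $F_2$ so as to merge the two odd $5$-cycles into a single even cycle while preserving the evenness of $F_1$.

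The main obstacle is exactly this last step. First one must verify that a matching $Q$ separating $u$ and $v$ exists, which amounts to checking—using the vertex- and edge-transitivity of $P$ and the six $5{+}5$ bipartitions arising from $P_1,\dots,P_6$—that every non-adjacent pair of $P$ is separated by the bipartition of some $P_j$. Second, one must confirm that the alternating swap closes up so that \emph{both} resulting $2$-factors have only even components, and this for an arbitrary completion of $M$ on the remaining eight vertices. I expect this to require a short case analysis according to how the new edge $uv$ meets a conveniently chosen $Q$, with the genuinely tight case being when $M$ has a single edge outside $E(P)$; when several edges of $M$ are new there is extra freedom to route more than one of them into $F_2$, which should only make the repair easier.
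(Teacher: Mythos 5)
The paper does not prove this statement; Theorem~\ref{theo:P+matching} is quoted from \cite{ma2025sets} without proof, so your attempt can only be judged on its own merits. Your ``if'' direction is essentially complete and correct: once one grants the three classical facts about the perfect matchings of $P$, the linear system $x_1+x_j=2$ for $j\in\{2,\dots,6\}$ together with $\sum_j x_j=4$ forcing $x_1=\tfrac32$ is a clean obstruction. One small caveat: your ``slick'' derivation of the third fact from the first two is not actually a derivation. The incidence count $6\cdot 5=2\cdot 15$ only says that the average of $|P_i\cap P_j|$ over the $15$ pairs is $1$; it does not by itself rule out a pair meeting in two edges (the symmetric difference being a $6$-cycle) compensated by a disjoint pair. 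You need the extra observation that $P_i\triangle P_j$ is a union of even cycles of length at least $6$ in a non-Hamiltonian graph of girth $5$ on $10$ vertices, hence a single $8$-cycle, which gives $|P_i\cap P_j|=1$ directly --- or simply cite the fact.

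The genuine gap is the ``only if'' direction, and you acknowledge it yourself. Reducing $4$-edge-colorability to a decomposition into two even $2$-factors is a sound reformulation, and you correctly identify why the naive split $F_1=M\cup Q$, $F_2=E(P)\setminus Q$ fails ($P$ minus a perfect matching is always two $5$-cycles). But everything after that is a plan, not a proof: you do not verify that a perfect matching $Q$ separating $u$ and $v$ exists, you do not specify the ``alternating swap'' that reroutes edges between $F_1$ and $F_2$, and you do not check that both resulting $2$-factors end up with only even components --- and this must be done for an \emph{arbitrary} $1$-regular $M$ on $V(P)$ with at least one edge outside $E(P)$, where the remaining four edges of $M$ may lie inside or outside $E(P)$ in any combination, possibly creating further parallel edges. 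Moving $uv$ into $F_2$ raises the degrees of $u$ and $v$ to $3$ there and drops them to $1$ in $F_1$, so a nontrivial rebalancing is forced, and its parity bookkeeping is exactly the content of the theorem's hard direction. A statement of the form ``I expect this to require a short case analysis \dots which should only make the repair easier'' cannot stand in for that analysis; as written, the ``only if'' direction is unproved.
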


\begin{cor} \label{cor: P+M}
Let $P$ be the Petersen graph and $\mathcal{M}$ be a finite multiset of 
edge sets of 1-regular graphs on $V(P)$. Then $P+\sum_{M\in\ca M}M$ is class 2
if and only if $M \subseteq E(P)$, for each $M \in \mathcal{M}$.
\end{cor}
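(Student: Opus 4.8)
The plan is to prove Corollary~\ref{cor: P+M} by induction on $|\mathcal{M}|$, using Theorem~\ref{theo:P+matching} as both the base case and the engine of the inductive step. The "if" direction is immediate: if $M \subseteq E(P)$ for every $M \in \mathcal{M}$, then $P + \sum_{M \in \mathcal{M}} M$ is obtained from $P$ by repeatedly duplicating existing edges, and duplicating edges of a class~2 graph keeps it class~2 (any proper edge coloring would in particular properly color the underlying $P$ with the same number of colors, and $P$ needs $4$; more carefully, a parallel pair forces its two edges to distinct colors, so a $3$-edge-coloring of $P^{\mathcal M}$ would restrict to a proper $3$-edge-coloring of $P$, which does not exist). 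So the substance is the "only if" direction.

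\medskip

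For the "only if" direction I would argue the contrapositive: suppose some $M_0 \in \mathcal{M}$ satisfies $M_0 \not\subseteq E(P)$, and show $P^{\mathcal M}$ is class~1. First I would peel off the remaining matchings. Write $\mathcal{M} = \{M_0\} \cup \mathcal{M}'$ and set $G = P + \sum_{M \in \mathcal{M}} M$. The idea is that the extra matchings in $\mathcal{M}'$ only make coloring easier: since each $M \in \mathcal{M}'$ is a perfect matching on $V(P)$, I would like to produce a proper edge coloring of $G$ by first handling $P + M_0$ and then assigning each copied matching its own private color. Concretely, $P + M_0$ is $4$-regular, and Theorem~\ref{theo:P+matching} tells us $P + M_0$ is class~1 because $M_0 \not\subseteq E(P)$, so it has a proper $4$-edge-coloring. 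Then for each of the $|\mathcal{M}'|$ matchings I add, I give all edges of that matching a single fresh color; since a perfect matching is its own color class and no two of its edges are adjacent, this is proper, and distinct matchings get distinct fresh colors so no conflict arises across them or with the original four colors. This yields a proper $(4 + |\mathcal{M}'|)$-edge-coloring of $G$, and since $\Delta(G) = 4 + |\mathcal{M}'|$, the graph $G$ is class~1.

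\medskip

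The one place requiring care, and what I expect to be the main obstacle, is verifying that the maximum degree is exactly $4 + |\mathcal{M}'|$ and that the "fresh color per matching" coloring is genuinely proper when some $M \in \mathcal{M}'$ might share edges with $P$ or with each other: an edge of $P$ that is duplicated by several matchings becomes a multi-edge, and I must ensure the color accounting still works. I would resolve this by noting that each $M \in \mathcal{M}'$ is added as a disjoint copy (a separate edge in the multigraph), so each contributes exactly one to the degree of every vertex regardless of whether it coincides with an existing edge, giving degree exactly $4 + |\mathcal{M}'|$ everywhere. Assigning color class one-to-one with the matchings of $\mathcal{M}'$ is then automatically proper because each matching, viewed as a set of edges in the multigraph, remains a matching. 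Thus the only real content is localized in the single matching $M_0$, which is precisely what Theorem~\ref{theo:P+matching} handles, and the corollary follows by reducing the multiset case to that single-matching theorem.
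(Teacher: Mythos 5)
Your ``only if'' direction is fine: peeling off one matching $M_0 \not\subseteq E(P)$, invoking Theorem~\ref{theo:P+matching} to get a proper $4$-edge-coloring of the $4$-regular graph $P+M_0$, and giving each remaining matching of $\mathcal{M}$ a private fresh color yields a proper $(3+|\mathcal{M}|)$-edge-coloring of the $(3+|\mathcal{M}|)$-regular graph $P+\sum_{M\in\mathcal{M}}M$, which is therefore class $1$. The gap is in the ``if'' direction. The principle you invoke --- that duplicating edges of a class~$2$ graph keeps it class~$2$ --- is false, even for the Petersen graph: remove a perfect matching $N$ from $P$ to obtain two disjoint $5$-cycles, color $N$ with color $1$ and each $5$-cycle with colors $2,3,2,3,4$ along the cycle; on each $5$-cycle two \emph{adjacent} vertices miss color $4$, and duplicating the edge between them and giving the new copy color $4$ produces a proper $4$-edge-coloring of a graph of maximum degree $4$, i.e.\ a class~$1$ graph obtained from $P$ by duplicating one edge. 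Your ``more careful'' version is a non sequitur for the same reason: a witness that $P^{\mathcal M}$ is class~$1$ is a proper $(3+|\mathcal{M}|)$-edge-coloring, not a $3$-edge-coloring, and its restriction to $P$ is merely a proper $(3+|\mathcal{M}|)$-edge-coloring of $P$, which exists for every $|\mathcal{M}|\geq 1$ since $\chi'(P)=4$. So for $|\mathcal{M}|=1$ the ``if'' direction is exactly the nontrivial half of Theorem~\ref{theo:P+matching} (not a consequence of edge duplication), and for $|\mathcal{M}|\geq 2$ your argument proves nothing.

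What is actually needed for the ``if'' direction is an argument along the following lines. Suppose every $M\in\mathcal{M}$ is a perfect matching of $P$ and $P^{\mathcal M}$ had a proper $(3+k)$-edge-coloring, where $k=|\mathcal{M}|$. Since $P^{\mathcal M}$ is $(3+k)$-regular, every color class is a perfect matching, and since parallel edges are adjacent, each color class projects injectively to a perfect matching of $P$; hence $E(P^{\mathcal M})$ decomposes into $3+k$ perfect matchings of $P$ covering each $e\in E(P)$ exactly $1+m(e)$ times, where $m(e)$ is the number of members of $\mathcal{M}$ containing $e$. Now use that $P$ has exactly six perfect matchings $F_1,\dots,F_6$, that every edge lies in exactly two of them, and that any two of them meet in exactly one edge, so that $e\mapsto\{j,j'\colon e\in F_j\cap F_{j'}\}$ is a bijection between the $15$ edges and the $15$ pairs of indices. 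Writing $a_j$ and $b_j$ for the multiplicities of $F_j$ in the decomposition and in $\mathcal{M}$, and $c_j=a_j-b_j$, the covering condition becomes $c_j+c_{j'}=1$ for every pair $j\neq j'$, which forces $c_j=\tfrac12$ for all $j$ and contradicts integrality. A naive induction on $k$ that deletes one color class does not substitute for this, because the deleted perfect matching may use an edge of multiplicity $1$, after which the underlying simple graph is no longer $P$ and Theorem~\ref{theo:P+matching} no longer applies. Some such global argument must be supplied before the corollary can be considered proved.
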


Let $r \geq  3$ and $\ca M$ be a multiset of $r-3$ perfect matchings of the Petersen graph. A projective planar $r$-graph which is constructed from $P^{\ca M}$ by repeatedly building $r$-sums with planar $r$-graphs is called $P^{\ca M}${\emph-like}. 

By Corollary~\ref{cor: P+M}, every $P^{\ca M}$-like $r$-graph is class 2. 
Let $M$ be one element of $\ca M$, $\ca M' = \ca M \setminus M$ and $E$ be 
the edge set of a 1-regular graph on $V(P)$ which is not a perfect matching of $P$. Then, every 
projective planar $r$-graph which is obtained from $P^{\ca M'} + E$ by repeatedly building $r$-sums with planar $r$-edge colorable $r$-regular graphs is class 1 by 
Corollary~\ref{cor: P+M}. Obviously, they all have a spanning $P^{\ca M'}$-like
subgraph. 

The complete characterization of the projective planar $r$-graphs of class 2 for $r>3$ seems to be a hard problem. There might be other constructions 
of class 2 projective planar $r$-graphs than 
the aforementioned ones. For instance, consider the case $r=4$.
The line graph of a graph $G$ is denoted by $L(G)$. 
Kotzig \cite{Kotzig_1957} proved that a bridgeless cubic graph is class 2 if and only if 
its line graph is class 2.

\begin{prop} Let $M$ be a perfect matching of $P$. There are infinitely many projective planar 4-graphs of class 2, which are not $P^M$-like.
\end{prop}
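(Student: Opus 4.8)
The plan is to realise such graphs as line graphs of suitable cubic graphs and to exploit Kotzig's theorem \cite{Kotzig_1957}. Let $G$ be a $3$-connected cubic graph embedded in the projective plane, and consider $L(G)$. First I would check that $L(G)$ has the three required features. Since $G$ is cubic, $L(G)$ coincides with the \emph{medial graph} $M(G)$ of the embedding: two edges of $G$ are adjacent in $L(G)$ exactly when they share a vertex, and for a cubic graph this happens exactly when they are consecutive in a face corner, which is the defining adjacency of $M(G)$. As the medial graph embeds in the same surface as $G$, the graph $L(G)$ is projective planar; it is $4$-regular, and by Kotzig's theorem it is class $2$ precisely when $G$ is. So it remains to verify that $L(G)$ is a $4$-graph, to show it is not $P^M$-like, and to exhibit an infinite class-$2$ supply of base graphs.

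The crux is a cut analysis of $L(G)$. For $X\subseteq V(L(G))=E(G)$ and a vertex $w$ of $G$, let $k_w$ be the number of edges of $G$ at $w$ lying in $X$; since the three edges at $w$ form a triangle in $L(G)$ and every edge of $L(G)$ is counted at its unique common endpoint, the contribution of $w$ to $|\partial_{L(G)}(X)|$ is $k_w(3-k_w)\in\{0,2\}$. Hence $|\partial_{L(G)}(X)|=4$ forces exactly two \emph{split} vertices $p,q$ (with $k_w\in\{1,2\}$), all others being \emph{full} ($k_w=3$) or \emph{empty} ($k_w=0$). No edge of $G$ joins a full vertex to an empty one, so deleting $\{p,q\}$ separates the full from the empty vertices; as $G$ is $3$-connected, one of these classes is empty, which forces one side of the cut to be a single vertex of $L(G)$. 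Thus every $4$-edge-cut of $L(G)$ is trivial. In particular every odd cut has at least $4$ edges, so $L(G)$ is a $4$-graph, and $L(G)$ has no $4$-edge-cut with at least two vertices on each side.

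To see that $L(G)$ is \textbf{not} $P^M$-like I would use the branch-set description of the construction: if a $4$-graph $K$ is $P^M$-like, then unwinding the $4$-sums exhibits a partition of $V(K)$ into $10$ connected branch sets $B_1,\dots,B_{10}$ with $|\partial_K(B_i)|=4$ whose contraction is $P^M$ (each $4$-sum replaces a vertex by a connected planar piece attached along its four incident edges, and every vertex of $P^M$ has degree $4$). If $L(G)$ were $P^M$-like, any branch set with at least two vertices would yield a $4$-edge-cut of $L(G)$ with at least two vertices on each side (the remaining nine branch sets contribute at least nine vertices to the complement), contradicting the cut lemma; hence all $B_i$ are singletons and $|V(L(G))|=10$. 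But $|V(L(G))|=|E(G)|=\tfrac32|V(G)|\geq 15>10$, a contradiction. For the infinite supply I would let $G$ range over the graphs obtained from the Petersen graph $P$ by repeatedly truncating vertices (replacing a vertex by a triangle, i.e.\ $3$-summing a copy of $K_4$). Each such $G$ is $3$-connected, cubic, and projective planar, and, being obtained from the class-$2$ graph $P$ by $3$-sums with the class-$1$ graph $K_4$, is again class $2$ by the tight-cut colouring principle used in the proof of Theorem~\ref{Thm: min counterexample}(1). Their orders are unbounded, so the graphs $L(G)$ are pairwise non-isomorphic, giving infinitely many projective planar class-$2$ $4$-graphs that are not $P^M$-like.

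The routine parts are the medial-graph identity and the invariance of the chromatic class under truncation. I expect the main obstacle to be the combination of the branch-set description of $P^M$-like graphs with the $4$-edge-cut lemma: one must verify carefully that after each $4$-sum the branch sets remain connected with boundary exactly $4$ (an elementary induction on the construction, using that the deleted vertex is replaced by a connected planar piece), and that $3$-connectivity of $G$ genuinely eliminates every non-trivial $4$-edge-cut of $L(G)$. This last point is the true hinge, since it is exactly what prevents $L(G)$ from splitting off a planar piece along a tight $4$-cut and hence from reducing to $P^M$. It then remains only to confirm the classical facts that truncation preserves both $3$-connectivity and embeddability in the projective plane.
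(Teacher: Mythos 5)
Your proposal is correct and follows essentially the same route as the paper: line graphs of $3$-connected $P$-like cubic graphs, Kotzig's theorem to get class $2$, and the observation that $3$-connectivity of $G$ rules out non-trivial tight ($4$-edge-)cuts in $L(G)$ while $P^M$-likeness on more than $10$ vertices would force one. The only difference is that you supply the details the paper leaves implicit (the $k_w(3-k_w)$ cut count, the branch-set unwinding, and a concrete infinite family via truncation), whereas the paper takes a generic $3$-connected $P$-like graph of order $4n$.
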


\begin{proof}
    Let $G$ be a 3-connected $P$-like graph of order $4n$ ($n \geq 3$). Then $L(G)$ is a projective planar $4$-graph, which is class 2 by the result of Kotzig. Suppose that $L(G)$ is $P^M$-like. Since $V(L(G)) > 10$,
    it contains non-trivial tight cuts. However, $G$ is 3-connected and therefore, $L(G)$ does not contain such cuts, a contradiction. 
\end{proof}

The line graph $L(G)$ of a $P$-like graph $G$ of order $4n$ ($n\geq 3$) 
is a projective planar 4-graph. 
It has a perfect matching $M$ and $G' = L(G)-M$ 
is a 3-graph of class 2. Hence, by Theorem~\ref{Thm: Mohar_etal P-like}, $G'$ is $P$-like. That is, $L(G)$ can also be obtained from $G'$ by adding the edges of a 
1-regular graph on $V(G')$ to $G'$. 
For $r=4$, the class 1 and the class 2 graphs obtained by the constructions from the beginning of this section can also be obtained in such a way.

\section*{Acknowledgements}
This work is supported by the following grants and projects:
The third author is supported by NSFC (No. 12401471) and ZJNSFC (No. QN25A010023).

\bibliography{Lit}{}
\addcontentsline{toc}{section}{References}
\bibliographystyle{abbrv}
	
\end{document}